\newcommand{\Ga}{\Gamma}
\newcommand{\Up}{\Upsilon}
\newcommand{\ep}{\epsilon}
\newcommand{\si}{\sigma}
\newcommand{\ga}{\gamma}
\newcommand{\bA}{\mathbb{A}}
\newcommand{\bC}{\mathbb{C}}
\newcommand{\bE}{\mathbb{E}}
\newcommand{\bP}{\mathbb{P}}
\newcommand{\bQ}{\mathbb{Q}}
\newcommand{\bR}{\mathbb{R}}
\newcommand{\bZ}{\mathbb{Z}}
\newcommand{\cB}{\mathcal{B}}
\newcommand{\cC}{\mathcal{C}}
\newcommand{\cE}{\mathcal{E}}
\newcommand{\cF}{\mathcal{F}}
\newcommand{\cH}{\mathcal{H}}
\newcommand{\cI}{\mathcal{I}}
\newcommand{\cK}{\mathcal{K}}
\newcommand{\cIX}{\mathcal{IX}} 
\newcommand{\cL}{\mathcal{L}}
\newcommand{\cM}{\mathcal{M}}
\newcommand{\cO}{\mathcal{O}}
\newcommand{\cQ}{\mathcal{Q}}
\newcommand{\cU}{\mathcal{U}}
\newcommand{\cX}{\mathcal{X}}
\newcommand{\age}{\mathrm{age}}
\newcommand{\Aut}{\mathrm{Aut}}
\newcommand{\ch}{\mathrm{ch}}
\newcommand{\Conj}{\mathrm{Conj}}
\newcommand{\CR}{ {\mathrm{CR}} }
\newcommand{\ev}{\mathrm{ev}}
\newcommand{\Ext}{\mathrm{Ext}}
\newcommand{\Hom}{\mathrm{Hom}}
\newcommand{\id}{\mathrm{id}}
\newcommand{\Ker}{\mathrm{Ker}}
\newcommand{\Ob}{\mathrm{Ob}}
\newcommand{\Out}{\mathrm{Out}}
\newcommand{\Pic}{\mathrm{Pic}}
\newcommand{\rank}{\mathrm{rank}}
\newcommand{\rig}{\mathrm{rig}}
\newcommand{\Spec}{\mathrm{Spec}}
\newcommand{\tw}{\mathrm{tw}}
\newcommand{\vir}{ \mathrm{vir} }
\newcommand{\Eff}{\mathrm{Eff}}
\newcommand{\be}{ {\mathbf{e}} }
\newcommand{\bh}{ {\mathbf{h}} }
\newcommand{\uu}{ {\mathbf{u}} }
\newcommand{\bw}{ {\mathbf{w}} }
\newcommand{\bx}{ {\mathbf{x}} }
\newcommand{\fl}{\mathfrak{l}}
\newcommand{\fo}{\mathfrak{o}}
\newcommand{\fp}{\mathfrak{p}}
\newcommand{\fx}{\mathfrak{x}}
\newcommand{\fy}{\mathfrak{y}}
\newcommand{\vGa}{ {\vec{\Ga }} }
\newcommand{\vUp}{ {\vec{\Up} } }
\newcommand{\vc}{\vec{c}}
\newcommand{\vf}{\vec{f}}
\newcommand{\vg}{\vec{g}}
\newcommand{\vi}{ {\vec{i}} }
\newcommand{\vs}{\vec{s}}
\newcommand{\tT}{ {\widetilde{T}} }
\newcommand{\tcC}{ {\widetilde{\cC}} }
\newcommand{\tf }{\widetilde{f}}
\newcommand{\bIX}{\overline{\cI}\cX}
\newcommand{\hG}{\widehat{G}}
\newcommand{\hbeta}{\hat{\beta}}
\newcommand{\hga}{\hat{\gamma}}
\newcommand{\Mbar}{\overline{\cM}}
\newcommand{\MgX}{\Mbar_{g,n}(X,\beta)}
\newcommand{\MgcX}{\Mbar_{g,n}(\cX,\beta)}
\newcommand{\MgXi}{\Mbar_{g,\vi}(\cX,\beta)}
\newcommand{\MBG}{\Mbar_{g,n}(\cB G)}
\newcommand{\MBGc}{\Mbar_{g,\vc}(\cB G)}
\newcommand{\MghX}{\Mbar_{g,n}(\hXY,\hbeta)}
\newcommand{\MghXi}{\Mbar_{g,\vi}(\hXY,\hbeta)}
\newcommand{\hXY}{\hat{\cX}_{\vUp}}
\newcommand{\IY}{ { I_{\vUp } }}
\newcommand{\edge}{ {\tiny\begin{array}{c}e\in E(\Ga) \\(e,v),(e,v')\in F(\Ga)\end{array}} }
\newcommand{\one}{\mathbb{1}}
\newcommand{\pt}{\textup{point}}
\newcommand{\lra}{\longrightarrow}
\newcommand{\hra}{\hookrightarrow}
\numberwithin{equation}{section}
\newtheorem{dummy}{dummy}[section]
\newtheorem{theorem}[dummy]{Theorem}
\newtheorem{proposition}[dummy]{Proposition}
\newtheorem{example}[dummy]{Example}
\newtheorem{definition}[dummy]{Definition}
\newtheorem{remark}[dummy]{Remark}
\newtheorem{lemma}[dummy]{Lemma}
\newtheorem{assumption}[dummy]{Assumption}
\begin{document}
\setcounter{page}{1}
%
%
\long\def\replace#1{#1}

\title{Stacky GKM Graphs and Orbifold Gromov-Witten Theory}
    
\author{Chiu-Chu Melissa Liu}
\address{Chiu-Chu Melissa Liu, Department of Mathematics, Columbia University, New York, NY 10027, USA}
\email{ccliu@math.columbia.edu}

\author{Artan Sheshmani}
\address{Artan Sheshmani, Center for Mathematical Sciences and Applications, Harvard University, Department of Mathematics, 20 Garden Street, Room 207, Cambridge, MA, 02139}
\address{Centre for Quantum Geometry of Moduli Spaces, Aarhus University, Department of Mathematics, Ny Munkegade 118, building 1530, 319, 8000 Aarhus C, Denmark}
\address{National Research University Higher School of Economics, Russian Federation, Laboratory of Mirror Symmetry, NRU HSE, 6 Usacheva str., Moscow, Russia, 119048}
\email{artan@cmsa.fas.harvard.edu}


\begin{abstract}
A smooth GKM stack is a smooth Deligne-Mumford stack equipped with an action of 
an algebraic torus $T$, with only finitely many zero-dimensional and one-dimensional orbits. 
\begin{enumerate}
\item[(i)]  We define the stacky GKM graph of a smooth GKM stack, under the mild assumption that any one-dimensional $T$-orbit
closure contains at least one $T$ fixed point. The stacky GKM graph is a decorated graph which contains enough information
to reconstruct the $T$-equivariant formal neighborhood of the 1-skeleton  (union of zero-dimensional and one-dimensional 
$T$-orbits) as a formal smooth DM stack equipped with a $T$-action. 
\item[(ii)] We axiomize the definition of a stacky GKM 
graph and introduce abstract stacky GKM graphs which are more general than stacky GKM graphs of honest smooth GKM stacks. From an 
abstract GKM graph we construct a formal smooth GKM stack. 
\item[(iii)] We define equivariant orbifold Gromov-Witten invariants of smooth GKM stacks, as well as formal equivariant orbifold Gromov-Witten invariants 
of formal smooth GKM stacks. These invariants can be computed by virtual localization and depend 
only the stacky GKM graph or the abstract stacky GKM graph. Formal equivariant orbifold Gromov-Witten invariants of the stacky GKM graph of 
a smooth GKM stack $\cX$ are refinements of equivariant orbifold Gromov-Witten invariants of $\cX$.
\end{enumerate}
\end{abstract}

\maketitle

\setcounter{tocdepth}{1} 

\tableofcontents

\section{Introduction}


\subsection{Background and motivation}

\subsubsection{GKM manifolds and GKM graphs}
An algebraic GKM manifold, named after Goresky, Kottwitz, and MacPherson, is a non-singular 
(complex) algebraic variety  equipped  with an action  of an algebraic torus  $T=(\bC^*)^m$ such that there are finitely many zero-dimensional and 
one-dimensional orbits. The action $T$ on $X$ is equivariantly formal over a field $\mathbb{K}$ if
the map from the $T$-equivariant cohomology $H^*_T(X;\mathbb{K})$ of $X$ to the ordinary cohomology $H^*(X;\mathbb{K})$ of $X$ is surjective.
In \cite{GKM}, Goresky-Kottwitz-MacPherson showed that  (when $\mathbb{K}=\bR$) the 
$T$-equivariant cohomology and the ordinary cohomology of an equivariantly formal GKM manifold can be expressed in terms  of 
a decorated graph known as the GKM graph. As an abstract graph, the vertices and edges of the GKM graph are in one-to-one correspondence with the zero-dimensional and one-dimensional orbits of the $T$-action on the GKM manifold. The additional decorations  provide enough information to reconstruct the $T$-equivariant formal neighborhood of the 1-skeleton (the union  of zero-dimensional and one-dimensional orbits of the torus action) of the GKM manifold. Toric manifolds are examples of GKM manifolds.

\subsubsection{Equivariant Gromov-Witten theory of GKM manifolds}
The quantum cohomology of a projective manifold $\cX$ is equal to the rational cohomology as a vector space over $\bQ$, equipped with the quantum product which is a family of products parametrized by Novikov variables such that the classical cup product is recovered by setting all the Novikov variables to zero. The quantum product is determined by genus zero Gromov-Witten invariants which are virtual counts of rational curves in $\cX$. More generally, genus $g$ Gromov-Witten invariants are virtual counts of genus $g$ stable maps to $\cX$. When $\cX$ is a GKM manifold which is equivariantly formal over $\bQ$,
Gromov-Witten invariants (which are rational numbers) can be lifted to equivariant Gromov-Witten invariants which take values in $H^*(\cB T;\bQ)\cong \bQ[u_1,\ldots, u_m]$, where $\cB T$ is the classifying space of the torus $T$.  Equivariant Gromov-Witten invariants can be computed by virtual localization on moduli of stable maps to $\cX$ \cite{Be2, GP98, GrPa} and depend only on the GKM graph \cite{LS}; the formula also makes sense for non-compact GKM manifolds if one works over the fractional field 
$\bQ(u_1,\ldots,u_m)$ of $H^*(\cB T;\bQ)=\bQ[u_1,\ldots, u_m]$. In particular, equivariant and non-equivariant quantum cohomology of an equivariantly formal projective GKM manifold is an invariant of the GKM graph (and the semi-group of effective curve classes which determine the Novikov ring).

\subsubsection{Generalization to orbifolds}
 The GKM theory and GKM graphs have been generalized to orbifolds. To our knowledge, GKM graphs have been defined for smooth 
orbifolds having presentation as a global quotient of a smooth manifold by action of a torus \cite{GZ2}. 
In the present paper we consider the most general possible definition that we can think of in the algebraic setting: 
we define a smooth GKM stack to be a smooth Deligne-Mumford (DM) stack equipped with an action by an algebraic torus, 
with finitely many zero-dimensional and one-dimensional orbits. In \cite{Zo}, Givental's quantization formula for all genus full descendant potential
of equivariant Gromov-Witten (GW) theory of GKM manifolds is generalized to equivariant orbifold GW theory of smooth GKM stacks; the definition of a smooth GKM stack in this paper is the same as the definition of a GKM orbifold in \cite{Zo}.
One of the main goals of this paper is to provide details of localization computations of orbifold GW invariants 
in the generality needed in \cite{Zo} which is beyond the toric stack case in \cite{Liu}.

\subsubsection{Examples from geometric engineering and mirror symmetry}
In \cite{LLLZ}, Li-Liu-Liu-Zhou introduced formal toric Calabi-Yau graphs and formal toric Calabi-Yau threefolds, which
arise in geometric engineering in 5d and 6d (see e.g. Figure 1, 20-22, 25, 26, 29-32 of \cite{HIV}) 
and Bryan's recent work on Donaldson-Thomas invariants of the banana manifold (Figure 2 of \cite{B19}). Formal toric Calabi-Yau threefolds
are usually not equivariantly formal, but their Gromov-Witten invariants and Donaldson-Thomas invariants can be defined and computed by the topological vertex. In this paper, we introduce abstract stacky GKM graphs and formal smooth GKM stacks, which 
include (as a very special case) the orbifold version of formal toric Calabi-Yau graphs and formal toric Calabi-Yau threefolds, 
of which orbifold Donaldson-Thomas invariants can be defined and computed by the orbifold topological vertex \cite{BCY}. 

Formal toric Calabi-Yau manifolds of arbitrary dimension arise in Strominger-Yau-Zaslow (SYZ) mirror symmetry for varieties of general type \cite{KL}.
More precisely, such an example is obtained by taking a formal neighborhood of the 1-skeleton 
of a non-toric non-compact K\"{a}hler manifold equipped with a compact torus action which is not Hamiltonian. 
These examples  naturally extend to orbifolds, which are special cases of formal smooth GKM stacks defined in this paper.  

\subsection{Summary of results}
In this paper, algebraic varieties and algebraic stacks are defined over the  field $\bC$ of complex numbers. A brief review
of smooth DM stacks is given in Section \ref{sec:DMstacks}.
\subsubsection{Smooth GKM stacks and stacky GKM graphs}
In Section \ref{sec:GKM}, we define the stacky GKM graph of a smooth GKM stack, under the mild assumption that any one-dimensional orbit contains at least one torus fixed point.  As an abstract graph, the vertices and edges are in one-to-one correspondence with the zero-dimensional and one-dimensional orbits of the torus action, respectively. 
\begin{enumerate}
\item In the manifold case, a zero-dimensional orbit is a (scheme) point, 
and a one-dimensional orbit closure is isomorphic to the complex projective line $\mathbb{P}^1$ or a complex affine line $\bC$.
\item In the case of global quotients by torus action (such as smooth toric DM stacks), a zero-dimensional orbit is 
a stacky point $\cB G =[\mathrm{point}/G]$ where $G$ is a finite abelian group, and
a one-dimensional orbit closure is a one-dimensional smooth toric DM stack.  
\item The general case studied in the present paper is significantly more complicated and subtle than the special case (2) studied in previous work:
in the general case, a zero-dimensional orbit is of the form $\cB G =[\mathrm{point}/G]$ where
G is a possibly non-abelian finite group, and a proper one-dimensional orbit closure is a spherical DM curve in the sense of Behrend-Noohi 
\cite{BN}.  
\end{enumerate}
Thanks to Behrend-Noohi's presentation of a general spherical DM curve $\fl$ as a quotient stack of 
the form  $[(\bC^2-\{0\})/E]$, where $E$ is a central extension of the fundamental group  $\pi_1(\fl)$ of the DM curve $\fl$
by $\bC^*$ \cite{BN}, we may express the total space of the normal bundle $N_{\fl/\cX}$ of a proper
one-dimensional orbit closure $\fl$ in a general smooth GKM stack $\cX$ as a global quotient $[((\bC^2-\{0\})\times \bC^{r-1})/E]$, where $r$ is the 
dimension of $\cX$, though $\cX$ itself is not necessary a global quotient. 
Such a presentation is crucial for the following two steps:  (i) to identify the extra decorations that we need to include in the definition of 
a stacky GKM graph, so that the equivariant formal neighborhood of the 1-skeleton can be reconstructed; (ii) to describe the torus 
fixed locus in the moduli stack of twisted stable maps to $\cX$, which is the first step of localization computations of equivariant
orbifold GW invariants of $\cX$. 

Smooth toric DM stacks are examples of smooth GKM stacks; the stacky GKM graph of a smooth toric DM stack  is determined by 
the stacky fan defining the smooth toric DM stack. See \cite{BCS} for definitions of stacky fans and smooth toric DM stacks.

\subsubsection{Abstract stacky GKM graphs and formal smooth GKM stacks}
In Section \ref{sec:abstract-formal},  we axiomize the definition of a stacky GKM graph and 
introduce abstract stacky GKM graphs which are more general than stacky GKM graphs of honest smooth GKM stacks. 
From an abstract stacky GKM graph we construct a formal smooth GKM stack.  Our definition
of abstract stacky GKM graphs is inspired by the definition of abstract 1-skeleton in \cite{GZ2} and also generalizes 
it in several aspects. 

Given a formal smooth GKM stack $\hXY$ defined by an abstract GKM graph $\vUp$, we define
the set of effective classes $\Eff(\hXY)$ and a vector space $\cH_{\vUp}$ over the fractional field
$\cQ_T=\bQ(u_1,\ldots,u_m)$ of $R_T:= H^*(\cB T;\bQ) =\bQ[u_1,\ldots,u_m]$. If $\hXY$ is the stacky GKM graph of 
an honest smooth GKM stack $\cX$, then there is a surjective map $j_*: \Eff(\hXY) \to \Eff(\cX)$ and 
a $\cQ_T$ linear map $j^*:H_T^*(\hXY;\cQ_T)\to \cH_{\vUp}$. If the $T$-action on $\cX$ is equivariant formal then  $j^*$ is a linear isomorphism.

\subsubsection{Equivariant orbifold Gromov-Witten theory} 
In Section \ref{sec:orbGW}, we define equivariant orbifold Gromov-Witten (GW) invariants
of smooth GKM stacks and formal equivariant orbifold GW invariants of formal smooth GKM stacks.
Let $g$, $n$, $a_1,\ldots,a_n$ be non-negative integers. 
\begin{enumerate}
\item[(i)] Let $\cX$ be a smooth GKM stack and let $T$ be the complex algebraic torus acting
on $\cX$. Given an effective curve class $\beta\in \Eff(\cX)$
and $T$-equivariant cohomology classes $\gamma_1^T,\ldots,\gamma_n^T\in H^*_T(\cX;\cQ_T)$, where
$\cQ_T =\bQ(u_1,\ldots, u_m)$ is the fractional field of $H^*(\cB T)=\bQ[u_1,\ldots,u_m]$, 
we define genus $g$, degree $\beta$, $T$-equivariant orbifold GW invariants
$$
\langle \bar{\ep}_{a_1}(\gamma_1^T),\ldots, \bar{\ep}_{a_n}(\gamma_n^T)\rangle ^{\cX_T}_{g,\beta}
\in \cQ_T
$$
via localization. When the coarse moduli space of $\cX$ is projective (so that non-equivariant orbifold Gromov-Witten invariants
of $\cX$ are defined) and the torus action on $\cX$ is equivariantly formal over $\bQ$ (in the sense that the map from
$T$-equivariant orbifold Chen-Ruan cohomology to the non-equivariant orbifold Chen-Ruan cohomology is surjective), they 
are refinements of (non-equivariant) orbifold GW invariants of $\cX$.

\item[(ii)] Given a formal smooth GKM stack $\hXY$ defined by an abstract GKM graph $\vUp$, we define
the set of effective classes $\Eff(\hXY)$ and a vector space $\cH_{\vUp}$ over $\cQ_T$.
Given $\hbeta\in \Eff(\hXY)$ and $\hga_1,\ldots,\hga_n\in \cH_{\vUp}$, 
we define genus $g$, degree $\hbeta$, formal $T$-equivariant orbifold GW invariants
\begin{equation}\label{eqn:formal-invariants}
\langle \bar{\ep}_{a_1}(\hga_1),\ldots, \bar{\ep}_{a_n}(\hga_n)\rangle ^{\vGa}_{g,\hbeta}.
\in \cQ_T
\end{equation}
via localization.

\item[(iii)] If $\vGa$ is the stacky GKM graph of $\cX$ in (i) then there is 
a surjective map $j_*: \Eff(\hXY)\lra \Eff(\cX)$ and a $\cQ_T$-linear map
$j^*: H_T^*(\cX;\cQ_T) \lra \cH_\vUp$ such that
$$
\langle \bar{\ep}_{a_1}(\gamma_1^T),\ldots, \bar{\ep}_{a_n}(\gamma_n^T)\rangle ^{\cX_T}_{g,\beta}
=\sum_{\substack{ \hbeta\in \Eff(\hXY)\\ j_*\hbeta=\beta } }  \bar{\ep}_{a_1}(j^*\gamma_1^T),\ldots, \bar{\ep}_n(j^*\gamma_n^T)\rangle ^{\vGa}_{g,\hbeta}
$$
Therefore, formal equivariant orbifold GW invariants of $\vGa$ are refinements of equivariant orbifold GW invariants
of the smooth GKM stack $\cX$.
\end{enumerate}

In Section \ref{sec:localization},
we derive explicit localization formula of equivariant orbifold GW invariants \eqref{eqn:formal-invariants}. 
The localization formula is stated as Theorem \ref{main-orb}. In particular, we obtain localization formula of equivariant
orbifold GW invariants of smooth GKM stacks.

\subsection*{Acknowledgments}
The first author wishes to thank Tom Graber for his suggestion of generalizing
the localization computations for smooth toric DM stacks in \cite{Liu} to smooth GKM stacks, and Johan de Jong and Daniel Litt for helpful 
conversations. The second author would like to sincerely thank Behrang Noohi for teaching him his joint work with Kai Behrend \cite{BN} on DM curves. Without Noohi's generous instructions, which assisted the authors to formulate the abstract stacky GKM graphs, this work could not be completed. The first author would also like to sincerely thank Kai Behrend for very helpful discussion on geometry of DM curves and results in \cite{BN}. 

The first author is partially supported by NSF DMS-1206667 and NSF DMS-1564497; she was also supported by NSF DMS-1440140 while she was in residence at the Mathematical Sciences Research Institute (MSRI) in Berkeley, California, during Spring 2018.
The second author was partially supported by NSF DMS-1607871, NSF DMS-1306313 and Laboratory of Mirror Symmetry NRU HSE, RF Government grant, ag. No 14.641.31.0001. The second author would like to further sincerely thank the center for Quantum Geometry of Moduli Spaces at Aarhus University, the Center for Mathematical Sciences and Applications at Harvard University and the Laboratory of Mirror Symmetry in Higher School of Economics, Russian federation, for the great help and support.

\section{Smooth Deligne-Mumford Stacks} \label{sec:DMstacks}
Let $\cX$ be a smooth Deligne-Mumford (DM) stack, and let $\pi:\cX\to X$ be the natural projection to the coarse moduli space $X$.

\subsection{The inertia stack and its rigidification}
The inertia stack $\cIX$ associated to $\cX$ is a smooth DM stack
such that the following diagram is Cartesian:
$$
\begin{CD}
\cIX @>>> \cX \\
@VVV @VV{\Delta}V \\
\cX  @>{\Delta}>> \cX\times \cX 
\end{CD}
$$
where $\Delta:\cX\to \cX\times \cX$ is the diagonal map.
An object in the category $\cIX$ is a pair
$(x,g)$, where $x$ is an object in the category
$\cX$ and $g\in \Aut_{\cX}(x)$:
$$
\Ob(\cIX) =\{ (x,g)\mid x\in \Ob(\cX), g\in \Aut_{\cX}(x)\}.
$$
The morphisms between two objects in the category $\cIX$ are:
$$
\Hom_{\cIX}( (x_1,g_1), (x_2,g_2)) =\{ h\in \Hom_{\cX} (x_1, x_2) \mid h\circ g_1 = g_2 \circ h \}.
$$
In particular,
$$
\Aut_{\cIX}(x,g) =\{ h\in \Aut_{\cX}(x)\mid h\circ g = g\circ h\}.
$$
The rigidified inertia stack $\bIX$ satisfies
$$
\Ob(\bIX) = \Ob(\cIX),\quad
\Aut_{\bIX}(x,g) = \Aut_{\cIX}(x,g)/\langle g\rangle,
$$
where $\langle g\rangle$ is the subgroup of  $\Aut_{\cIX}(x,g)$ generated by $g$.

There is a natural projection $q:\cIX\to \cX$ 
which sends $(x,g)$ to $x$.  There is a natural
involution $\iota:\cIX\to \cIX$ which
sends $(x,g)$ to $(x,g^{-1})$. We assume that $\cX$ is connected. Let
$$
\cIX =\bigsqcup_{i\in I}\cX_i
$$
be disjoint union of connected components.
There is a distinguished connected component
$\cX_0$ whose objects are $(x,\id_x)$, where
$x\in \Ob(\cX)$, and $\id_x\in \Aut(x)$ is the identity
element; note that $\cX_0 \cong \cX$. The involution $\iota$ restricts
to an isomorphism $\iota_i:\cX_i\to \cX_{\iota(i)}$. 
In particular, $\iota_0:\cX_0\to \cX_0$ is the
identity functor.

\begin{example}[classifying space] \label{ex:BG}
Let $G$ be a finite group. The stack $\cB G= [\pt/G]$
is a category which consists of one object $x$, and $\Hom(x,x)=G$. 
The objects of its inertia stack $\cI \cB G$ are
$$
\Ob(\cI \cB G) =\{ (x,g)\mid g\in G\}.
$$
The morphisms between two objects are
$$
\Hom((x,g_1), (x,g_2)) = \{ g\in G\mid g_2 g = g g_1\} =\{ g\in G\mid g_2= g g_1 g^{-1}\}. 
$$
Therefore
$$
\cI\cB G\cong [G/G]
$$
where $G$ acts on $G$ by conjugation. We have
$$
\cI \cB G=\bigsqcup_{c\in \Conj(G)} (\cB G)_c
$$
where $\Conj(G)$ is the set of conjugacy classes in $G$, and
$(\cB G)_c$ is the connected component associated to the conjugacy 
class $c\in \Conj(G)$. We have
$$
(\cB G)_c = [c/G] \cong [\{h\}/ C_G(h)] \cong \cB \left(C_G(h)\right).
$$
for any element $h$ in the conjugacy class $c$, where $C_G(h)=\{ a\in G: ah=ha\}$ is the centralizer of $h$
in the group $G$.

In particular, when $G$ is abelian, we have $\Conj(G)=G$, and
$$
\cI \cB G =\bigsqcup_{h\in G} (\cB G)_h
$$
where $(\cB G)_h =[\{ h \} /G]\cong \cB G$.

\end{example}

\subsection{Age} \label{sec:age}
Given a positive integer $s$, let $\mu_s$ denote the 
group of $s$-th roots of unity. It is a cyclic subgroup
of $\bC^*$ of order $s$, generated by
$$
\zeta_s := e^{2\pi \sqrt{-1}/s}.
$$

Given any object $(x,g)$ in $\cIX$, $g:T_x\cX\to T_x\cX$ is 
a linear isomorphism such that $g^s=\id$, where $s$ is the order
of $g$. The eigenvalues of $g:T_x\cX\to T_x\cX$ are $\zeta_s^{l_1},\ldots, \zeta_s^{l_r}$, where
$l_i\in \{0,1,\ldots, s-1\}$ and $r=\dim_\bC \cX$.
Define 
$$
\age(x,g):= \frac{l_1+\cdots+l_r}{s}. 
$$
Then $\age: \cIX\to \bQ$ is constant on each connected
component $\cX_i$ of $\cIX$.  Define $\age(\cX_i)=\age(x,g)$ where
$(x,g)$ is any object in the groupoid $\cX_i$.  Note that
$$
\age(\cX_i)+ \age(\cX_{\iota(i)})=\dim_\bC \cX-\dim_\bC \cX_i.
$$

\subsection{The Chen-Ruan orbifold cohomology group}
In \cite{CR1}, W. Chen and Y. Ruan introduced  the orbifold cohomology group 
of a complex orbifold. See \cite[Section 4.4]{AGV1} for a more algebraic version.

As a graded $\bQ$ vector space, the Chen-Ruan orbifold cohomology group of $\cX$ is defined to be
$$
H^*_\CR(\cX;\bQ): = \bigoplus_{a\in \bQ_{\geq 0}}H^a_\CR(\cX;\bQ)
$$
where
$$
H^a_\CR(\cX;\bQ)=\bigoplus_{i\in I} H^{a-2\age(\cX_i)}(\cX_i;\bQ).
$$

Suppose that $\cX$ is proper. Then we have the following proper pushforward to a point:
$$
\int_\cX:H^*(\cX;\bQ)\to H^*(\pt;\bQ) =\bQ.
$$
The orbifold Poincar\'{e} pairing is defined by 
$$
(\alpha,\beta) := 
\begin{cases} 
\int_{\cX_i} \alpha \cup \iota_i^* \beta, & j=\iota(i),\\
0, & j\neq \iota(i),
\end{cases}
$$
where $\alpha\in H^*(\cX_i;\bQ)$, $\beta\in H^*(\cX_j;\bQ)$.

\section{Smooth GKM stacks and stacky GKM graphs} \label{sec:GKM}

In this section, we describe the geometry of
smooth GKM stacks, and define the stacky GKM graph of a smooth GKM stack.
In the algebraic setting, smooth GKM stacks are more general than the GKM orbifolds
in Guillemin-Zara \cite{GZ1, GZ2} in the following ways: 
\begin{enumerate}
\item Guillemin-Zara consider compact GKM manifolds or orbifolds, whereas we consider smooth GKM stacks which are not necessarily compact.
\item By orbifolds Guillemin-Zara mean orbifolds having a presentation of the form $X/K$, $K$ being a torus and $X$ being a manifold on which $K$ acts in a faithful, locally free fashion \cite[Section 1.2]{GZ2}.  In particular, the inertia group  of a point is 
a finite abelian group, and the generic inertia group is trivial.  Our smooth GKM stacks do not have such a presentation in general; the inertia group of a point
is a possibly non-abelian finite group, and the generic inertial group is not necessarily trivial.
\item We do not assume the torus action on the smooth GKM stack is faithful.
\end{enumerate}
On the other hand, Guillemin-Zara work in the $C^\infty$ category and consider $C^\infty$-action
by a compact torus $U(1)^m$, while we restrict ourselves to smooth DM stacks and algebraic
action by an algebraic torus $(\bC^*)^m$ (which restricts to a $U(1)^m$-action).  

\subsection{Smooth GKM stacks}
The following definition of a smooth GKM stack is the same as the definition of a GKM orbifold in \cite{Zo}.

\begin{definition}[smooth GKM stacks]
Let $\cX$ be a smooth DM stack. 
We say $\cX$ is a smooth GKM stack if it is equipped with an action of an algebraic torus
$T=(\bC^*)^m$ with only finitely many zero-dimensional and one-dimensional orbits.
\end{definition}

The notion of a group action on a stack is discussed in \cite{Ro}.

Let $N=\Hom(\bC^*, T) \cong \bZ^m$ be the lattice of 1-parameter
subgroups of $T$, and let $M=\Hom(T,\bC^*)$ be the character lattice of $T$. Then $M=\Hom(N,\bZ)$ is the dual lattice of $N$. We introduce
$$
N_\bR:= N\otimes_{\bZ} \bR,\quad
M_\bR:= M\otimes_{\bZ} \bR,\quad
N_\bQ := N\otimes_\bZ \bQ, \quad M_\bQ := M\otimes_{\bZ}\bQ.
$$
Then $M_\bQ$ can be canonically identified with $H^2_T(\pt;\bQ) =
H^2(\cB T;\bQ)$, where $\cB T$ is the classifying space of $T$.

We make the following assumption on $\cX$.
\begin{assumption}\label{assume}
\begin{enumerate}
\item The set $\cX^T$ of $T$ fixed points in $\cX$ is non-empty.
\item The coarse moduli space of a one-dimensional orbit closure is either a complex projective line $\bP^1$ 
or a complex affine line $\bC$.
\end{enumerate}
\end{assumption}
Note that (1) and (2) hold when $\cX$ is proper. Indeed, if $\cX$ is 
proper then the coarse moduli space of any one-dimensional orbit closure
is $\bP^1$.

\begin{example}
If $\cX$ is a smooth toric DM stack defined by a finite fan \cite{BCS, FMN}, then $\cX$ is a smooth GKM stack.
In particular, any proper smooth toric DM stack is a smooth GKM stack.
\end{example}

\begin{example}[footballs] \label{ex:football}
Given any positive integers $m,n$, define a subgroup $G_{m,n}$ of $(\bC^*)^2$ by 
$$
G_{m,n}=\{ (t_1,t_2)\in (\bC^*)^2: t_1^n=t_2^m\}.
$$
The football $\cF(m,n)$ is defined as the quotient stack
$$
\cF(m,n):= [(\bC^2-\{0\})/G_{m,n}]
$$
where $(t_1,t_2)\in G_{m,n}$ acts on $(z_1,z_2)\in \bC^2-\{0\}$ by $(t_1,t_2)\cdot (z_1,z_2)=(t_1z_1, t_2z_2)$. Then
$\cF(m,n)$ is a proper smooth toric DM stack, so it is a smooth GKM stack.
The inertial groups of the two torus fixed points $[1,0]$ and $[0,1]$ are $\mu_m$ and $\mu_n$, respectively; the inertial group of any other point is trivial. 
\end{example}

\begin{example}[weighted projective lines]\label{ex:WP}
Given any positive integers $m,n$,  the weighted projective line $\bP(m,n)$ is defined as the quotient stack
$$
\bP(m,n):= [ (\bC^2-\{0\})/\bC^*]
$$
where $t\in \bC^*$ acts on $(z_1,z_2)\in \bC^2-\{0\}$ by $t\cdot (z_1,z_2)= (t^m z_1, t^n z_2)$. Then $\bP(m,n)$ is a proper smooth toric DM stack, so
it is a smooth GKM stack. The inertial group of the two torus fixed points
$[1,0]$ and $[0,1]$ are $\mu_m$ and $\mu_n$, respectively; the inertia group of any other point is $\mu_d$, where $d=g.c.d.(m,n)$ is the greatest common divisor of $m,n$.
The rigidification of $\bP(m,n)$ is $\bP(\frac{m}{d},\frac{m}{d})\cong \cF(\frac{m}{d},\frac{n}{d})$.  
\end{example}

\begin{example}
An algebraic GKM manifold (in the sense of \cite{LS}) is a smooth GKM stack.
\end{example}

\begin{definition}
Let $\cX$ be a smooth GKM stack. The {\em 0-skeleton} of $\cX$ is defined to be
$\cX^0:= \cX^T$ which is the union of zero-dimensional orbits of the $T$-action on $\cX$. 
The {\em 1-skeleton} $\cX^1$ of  $\cX$ is defined to be the 
union of zero-dimensional  and one-dimensional orbits of the $T$-action on $\cX$. 
\end{definition}

\subsection{A zero dimensional orbit and its normal bundle} \label{sec:zero-dim} 
Let $\cX$ be an $r$-dimensional smooth GKM stack, so that $T=(\bC^*)^m$ acts algebraically on $\cX$. 
A zero-dimensional $T$ orbit in $\cX$ is a fixed (possibly stacky) point $\fp =\cB G$ under the $T$-action on $\cX$, where $G$ is a finite group. The normal bundle of $\fp$ in $\cX$ is the tangent space $T_{\fp}\cX$ to $\cX$ at $\fp$, which is a rank $r$ vector bundle over $\cB G$, or equivalently, 
a representation $\phi: G\to GL(r,\bC)$. The $T$-action on $\cX$ induces a $T$-action on $T_{\fp}\cX =[\bC^r/G]$, which can be viewed as a $T$-equivariant vector bundle of rank $r$ over $\cB G$. The GKM assumption implies that
$T_{\fp}\cX$ is a direct sum of $T$-equivariant line bundles 
$L_1,\ldots, L_r$ over $\cB G$, so that 
$$
\phi=\bigoplus_{i=1}^r\phi_i 
$$
is the direct sum of $r$ one-dimensional representations $\phi_i: G\to GL(1,\bC)=\bC^*$.
We may choose coordinates on $\bC^r$ such that $L_i$ corresponds to the $i$-th coordinate axis. The $G$-action on $\bC^r$ is given by
$$
g\cdot (z_1,\ldots,z_r)= (\phi_1(g)z_1,\ldots, \phi_r(g)z_r)
$$
where $g\in G$ and $(z_1,\ldots, z_r)\in \bC^r$. Let
$$
\bw_i:= c_1^T(L_i) \in H^2_T(\cB G;\bQ) \cong H^2_T(\pt;\bQ)= M_\bQ. 
$$
The GKM condition implies that $\bw_i$ and $\bw_j$ are linearly independent if $i\neq j$. 
The tangent space $T_{\fp}\cX=[\bC^r/G]$, together with the $T$-action, is an affine smooth
GKM stack characterized by the finite group $G$, $\phi_1,\ldots,\phi_r \in \Hom(G,\bC^*)$, 
and the weights $\bw_1,\ldots, \bw_r\in M_\bQ$. 

We define the stacky GKM graph of $[\bC^r/G]$ as follows. The underlying graph has a single vertex $\si$ and $r$ rays $\ep_1,\ldots, \ep_r$ emanating from 
$\si$. The vertex is decorated by the group $G$; the edge $\ep_i$ is decorated by the group $G_i$;
the flag $(\ep_i, \si)$ is decorated by $\phi_i$, $\bw_i$, and the injective group homomorphism 
$G_i\hookrightarrow G$.

The image of $\phi_i:G\to \bC^*$ is a finite cyclic group $\mu_{r_i}$ of order $r_i>0$.
Let $G_i$ be the kernel of $\phi_i$. For each $i$, we have a short exact sequence of finite groups:
$$
1\to G_i \longrightarrow G \stackrel{\phi_i}{\longrightarrow} \mu_{r_i}\to 1.
$$

The coarse moduli 
$$
\bC^r/G = \Spec \left(\bC[z_1,\ldots,z_r]^G\right)
$$
is an affine $T$ scheme. Let $x_i:= z_i^{r_i}$. The $i$-th axis
$$
\ell_i = [\{ (z_1,\ldots, z_r)\in \bC^r: z_j=0  \textup{ for }j\neq i\}/G] = \Spec \left(\bC[z_i]^G\right) = \Spec \bC[x_i]
$$
is a 1-dimension affine $T$ subscheme of $\bC^r/G$. The $T$-action on $\bC^r$ restricts to a $T$-action 
on $\ell_i\cong \bC$ with weight $r_i \bw_i$, so 
$$
r_i \bw_i \in M. 
$$

\subsection{A proper one-dimensional orbit closure} \label{sec:one-dim}

The main reference of this subsection is \cite{BN}. We thank Behrend 
and Noohi for explaining results in their paper \cite{BN} to us.

Let $\fl \subset \cX$ be a proper one-dimensional $T$ orbit closure in $\cX$. Then $\fl$ contains exactly two zero-dimensional
$T$ orbits $x$ and $y$ with inertia groups $G_x$ and $G_y$, respectively. The representation of $G_x$ (resp.$G_y$) on the tangent
line $T_x \fl$ (resp. $T_y \fl$) determines a group homomorphism $\phi_x: G_x \to  \bC^*$ (resp. $\phi_y: G_y \to \bC^*$) 
with image $\mu_{r_x}$ (resp. $\mu_{r_y}$), where
$r_x$ and $r_y$ are positive integers. Then $\fl$ is a $G$-gerbe over its rigidification  $\fl^{\rig}$, where
$G \cong \Ker(\phi_x)\cong \Ker(\phi_y)$ and $\fl^{\rig}$ is an orbifold DM curve
isomorphic to the football $\cF(r_x,r_y)$ (cf. Example \ref{ex:football}); here an orbifold DM curve is a 1-dimensional smooth DM stack with a trivial generic inertia group.
Let $\bar{x}$ and $\bar{y}$ be the images of $x$ and $y$ under the morphism 
$\fl\to \fl^\rig$. The inertia groups of $\bar{x}$ and $\bar{y}$ are $\mu_{r_x}$ and $\mu_{r_y}$, respectively. The coarse moduli space
$\ell$ of $\fl$ and $\fl^\rig$ is isomorphic to the projective line $\bP^1$. 

The DM curve $\fl$ is spherical in the sense of \cite{BN}. In the rest of this subsection, we recall some relevant facts from \cite{BN}.
\begin{enumerate}
\item The open embeddings
$$
\iota_{\bar{x}}: \cU_{\bar{x}}:=\fl^{\rig} \setminus \{\bar{y}\} \cong [\bC/\mu_{r_x}] \hookrightarrow \fl^{\rig},\quad
\iota_{\bar{y}}: \cU_{\bar{y}}:=\fl^{\rig} \setminus \{\bar{x}\} \cong [\bC/\mu_{r_y}]\hookrightarrow \fl^{\rig} 
$$
induce surjective group homomorphisms
$$
\iota_{\bar{x}*} : \pi_1(\cU_{\bar{x}}) \cong \mu_{r_x} \to \pi_1(\fl^\rig)\cong \mu_a,\quad
\iota_{\bar{y}*} : \pi_1(\cU_{\bar{y}}) \cong \mu_{r_y} \to  \pi_1(\fl^\rig)\cong \mu_a,
$$
where $a=g.c.d.(r_x,r_y)$.
\item The open embeddings
$$
\iota_x: \cU_x:=\fl\setminus \{y\} \cong [\bC/G_x] \hookrightarrow \fl\quad
\iota_y: \cU_y:=\fl\setminus \{x\} \cong [\bC/G_y]\hookrightarrow \fl
$$
induce surjective group homomorphisms
$$
\iota_{x*} : \pi_1(\cU_x) \cong G_x \to \pi_1(\fl),\quad
\iota_{y*} : \pi_1(\cU_y) \cong G_y \to \pi_1(\fl).
$$
\item $\iota_{x*}$ and $\iota_{y*}$ restrict to the same group homomorphism $G\to \pi_1(\fl)$, whose kernel is a cyclic group $\mu_d$ contained in the center $Z(G)$ of $G$, and whose cokernel 
is $\pi_1(\fl^{\rig})\cong \mu_a$. In other words, we have the following exact sequence of finite groups:
$$
1\to   \mu_d \to G \to \pi_1(\fl) \to \mu_a\to 1.
$$
\item We have a commutative diagram
$$
\xymatrix{
1 \ar[r] & G \ar[r]  \ar[d]^{\id_G} & G_x \ar[r]^{\iota_{x*}}  \ar[d] & \mu_{r_x} \ar[r] \ar[d]^{q_x} & 1 \\
  &  G   \ar[r]  & \pi_1(\fl) \ar[r] & \mu_a \ar[r] & 1 \\
1 \ar[r] & G\ar[r] \ar[u]_{\id_G} & G_y  \ar[r]^{\iota_{y*}} \ar[u] &  \mu_{r_y} \ar[r] \ar[u]_{q_y} &1 
} 
$$
where $\id_G:G\to G$ is the identity map, the rows are exact, and $q_x, q_y$ are surjective. The maps $\mu_{r_x} \to \Out(G)$ and $\mu_{r_y}\to \Out(G)$ factor through $\mu_a \to \Out(G)$.
\item We have $(r_x,r_y)= (ap, aq)$, where $p,q\in \bZ_{>0}$ are coprime.  The universal cover of $\fl^{\rig}$ is $\cF(p,q)=\bP(p,q)$; the universal cover of $\fl$ is the weighted projective line $\bP(d p, d q)$.  (Recall that $\bP(m,n)$ is simply connected for any positive integers $m,n$.)

\item There exist 
\begin{itemize}
\item a central extension $E$ of the finite group $\pi_1(\fl)$ by $\bC^*$, so that we have a short exact sequence of groups
$$
1\to \bC^* \stackrel{i}{\to} E \to \pi_1(\fl)\to 1
$$
where $\bC^*$ is contained in the center $Z(E)$ of $E$ and is the connected component of the identity of $E$, and
\item a representation $\rho: E\to GL(2,\bC)$,
such that $\rho\circ i(t) = (t^{d p}, t^{d q})$ and
\begin{equation}\label{eqn:C2E}
\fl\cong [ (\bC^2-\{0\})/E].
\end{equation}
\end{itemize}
The inclusion $i: \bC^* \hookrightarrow E$ induces a surjective morphism
$$
\pi:\tilde{\fl}:= \bP(dp, dq) = [(\bC^2-\{0\})/\bC^*] \longrightarrow \fl=[ (\bC^2-\{0\})/E]
$$
which is the universal covering map. Taking the rigidification yields
$$
\pi^{\rig}: \tilde{\fl}^\rig = \bP(p, q)=\cF(p,q)\longrightarrow \fl^\rig = \cF(r_x,r_y) =\cF(ap, aq)
$$
which is also the universal covering map.  
\end{enumerate}

The GKM condition implies the image of $\rho$ in (6) lies in (up to conjugation) the subgroup 
$GL(1,\bC)\times GL(1,\bC)$ of diagonal matrices, i.e. $\rho = \rho_x\oplus \rho_y$ is the direct sum of two 1-dimensional representations
of $E$. 

Under the isomorphism \eqref{eqn:C2E} we have the following identifications:
$$
x =[1,0],\quad y=[0,1], \quad, G_x =\mathrm{Ker}(\rho_x), \quad G_y = \mathrm{Ker}(\rho_y),\quad  G= \mathrm{Ker}(\rho),
$$
$$
\rho_y(G_x) =\mu_{r_x},\quad \rho_x(G_y) = \mu_{r_y}.
$$

\subsection{Normal bundle of a proper one-dimensional orbit closure} Let 
$$
\fl=[(\bC^2-\{0\})/E]
$$ 
be as in Section \ref{sec:one-dim}. The Picard group of $\fl$ is given by
\begin{equation} \label{eqn:Picl}
\mathrm{Pic}(\fl) = \Hom(E,\bC^*).
\end{equation}
The normal bundle of $\fl$ in $\cX$ is a direct sum of $(r-1)$-line bundles over $\fl$:
$$
N_{\fl/\cX} = L_1\oplus \cdots \oplus L_{r-1}.
$$
For  $i=1,\ldots, r-1$, let $\rho_i \in \Hom(E,\bC^*)$ correspond to $L_i \in \Pic(\fl)$ under the isomorphism \eqref{eqn:Picl}. Then
the total space of $L_i$ is the quotient stack
$$
L_i = [ ((\bC^2-\{0\})\times \bC )/E]
$$
where the action of $E$ is given by 
$$
g\cdot (X,Y,Z)= (\rho_x(g) X, \rho_y(g) Y, \rho_i(g) Z).
$$
If $t\in \bC^*\subset E$ then 
$$
t\cdot(X,Y,Z)= (t^{dp} X, t^{dq} Y, t^{d_i} Z)
$$
for some $d_i \in \bZ$.  Recall that for any positive integers $m,n$, 
$$
\Pic(\bP(m,n))\cong \bZ
$$
is generated by 
$$
\cO_{\bP(m,n)}(1)  = [((\bC^2-\{0\})\times \bC)/\bC^*]
$$
where $t\in \bC^*$ acts by $t\cdot (X,Y,Z) = (t^{m}X, t^{n}Y, t Z)$.  We have
$$
\langle \cO_{\bP(m,n)}(1), [\bP(m,n)^{\rig}]    \rangle = \frac{1}{l.c.m.(m,n)}
$$
where $l.c.m.(m,n)$ is the least common multiple of $m,n$.

$$
\pi^* L_i = \cO_{\bP(dp,dq)}(d_i)
$$
where $\pi:\bP(dp,dq)\to \fl$ is the universal cover.  Define 
$$
a_i:=\langle  c_1(L_i), [\fl^\rig]\rangle  = \frac{d_i}{adpq}.
$$

There is a map $\tilde{\pi}^{\rig}: \bP(dp,dq) \to \bP(p,q)$ from the universal
cover $\bP(dp, dq)$ of $\fl$ to the universal cover of $\bP(p,q)$ of $\fl^\rig$; this map can be identified with the map to rigidification, and is of degree $1/d$.
We have 
$$
(\tilde{\pi}^{\rig})^* \cO_{\bP(p,q)}(1) = \cO_{\bP(dp,dq)}(d).
$$
The map from $\fl$ to $\fl^{\rig}$ is of degree $1/|G|$. The universal covering maps $\bP(dp,dq)\to \fl$ and
$\bP(p,q)\to \fl^\rig =\cF(ap,aq)$ are of degrees $a|G|/d$ and $a$, respectively.

We have
\begin{itemize}
\item The $G_x$-actions on $T_x \fl$ and $(L_i)_x$ are given by 
$\rho_y|_{G_x}$ and $\rho_i|_{G_x}$, respectively;
\item The $G_y$-actions on $T_y\fl$ and $(L_i)_y$ are given by
$\rho_x|_{G_y}$ and $\rho_i|_{G_y}$, respectively.
\end{itemize}

For $i=1,\ldots, r-1$, let $\bw_{x,i}$ and $\bw_{y,i}$  
be the weights of the $T$-actions on $(L_i)_x$ and $(L_i)_y$, respectively;
let $\bw_{x,r}$ and $\bw_{y,r}$ be the weights of the $T$-action on $T_x\fl$ and $T_y\fl$, respectively. Then
$$
r_x \bw_{x,r} + r_y \bw_{y,r}=0.
$$
For $i=1,\dots,r$, 
$$
\bw_{y,i} = \bw_{x,i} - a_i r_x  \bw_{x,r} = \bw_{x,i} + a_i r_y \bw_{y,r}.
$$
In particular, $a_r =\frac{1}{r_x}+ \frac{1}{r_y}$. 

The total space of $N_{\fl/\cX}$ is the quotient stack 
$$
[\left((\bC^2-\{0\})\times \bC^{r-1}\right)/E]
$$
where $E$ acts on $(\bC^2 -\{0\})\times \bC^{r-1}$ linearly by 
$\rho_x\oplus \rho_y \oplus \rho_1\oplus \cdots \oplus \rho_{r-1}$. 

\begin{remark}
If $\cX$ is a smooth toric DM stack (in the sense of \cite{BCS}) then $N_{\fl/\cX}$ is a smooth toric DM substack, and the above presentation as a quotient
stack can be constructed by the stacky fan, where $E$ is abelian.
\end{remark}

We define the GKM graph of $N_{\fl/\cX}$ as follows.
\begin{itemize}
\item The underlying abstract graph is a tree with two $r$-valent vertices
$\si_x,\si_y$ connected by a compact edge $\ep$. There are $r-1$ rays $\ep_1,\ldots, \ep_{r-1}$ emanating
from the vertex $\si_x$ and $r-1$ rays $\ep_1',\ldots, \ep'_{r-1}$ emanating from the vertex $\si_y$.
\item The vertices $\si_x$ and $\si_y$ are decorated by finite groups $G_x$ and $G_y$, respectively. 
\item The edge $\ep_i$ is decorated by the kernel $G_i$ of $\phi_{x,i}:= \rho_i|_{G_x}$.
The edge $\ep'_i$ is decorated by the kernel $G_i'$ of $\phi_{y,i}:= \rho_i|_{G_y}$.
The edge $\ep$ is decorated by the group $G$.
\item The flag $(\si_x,\ep_i)$ is decorated by $\bw_{x,i}\in M_\bQ$, $\phi_{x,i}\in \Hom(G_x,\bC^*)$,
and the injection $G_i \hookrightarrow G_x$.  
The flag $(\si_y,\ep'_i)$ is decorated by $\bw_{y,i}\in M_\bQ$, $\phi_{y,i}\in \Hom(G_y,\bC^*)$,
and the injection $G'_i \hookrightarrow G_y$. 
The flag $(\si_x,\ep)$ is decorated by $\bw_x$, $\rho_y|_{G_x}$, and the injection $G\hookrightarrow G_x$.
The flat $(\si_y,\ep)$ is decorated by $\bw_y$, $\rho_x|_{G_y}$, and the injection $G\hookrightarrow G_y$.
\item The unique compact edge $\ep$ is decorated by the central extension $E$ of $\pi_1(\fl)$ by $\bC^*$ and
$\rho_x,\rho_y,\rho_1,\ldots,\rho_{r-1} \in \Hom(E,\bC^*)$ with isomorphisms $G_x\cong\Ker(\rho_x)$, $G_y\cong \Ker(\rho_y)$,
$G\cong \Ker(\rho_x)\cap \Ker(\rho_y)$.
\end{itemize}

\subsection{The stacky GKM graph of a smooth GKM stack}\label{GKM-graph}
Let $\cX$ be a smooth GKM stack of dimension $r$, 
so that $T=(\bC^*)^m$ acts algebraically on $\cX$. 
Similar to Guillemin-Zara \cite{GZ1, GZ2}, we define the stacky GKM graph of $\cX$. 
This generalizes the GKM graph of an algebraic GKM manifold in \cite[Section 2.2]{LS}
and the toric graph of a smooth toric DM stack in \cite[Section 8.6]{Liu}.

Let $V(\Up)$ (resp. $E(\Up)$)
denote the set of vertices (resp. edges) in $\Up$. 
\begin{enumerate}
\item (Vertices) We assign a vertex $\si$ to each torus fixed 
point $\fp_\si$ in $\cX$. Let $p_\si$ be the corresponding
torus fixed point in the coarse moduli space $X$.
\item (Edges) We assign an edge $\ep$ to each one-dimensional orbit $\fo_\ep$ in $X$, and choose a point $\fp_\ep$ on $\fo_\ep$. Let $\fl_\ep$ be the closure of $\fo_\ep$, and let $\ell_\ep$ be the coarse
moduli space of $\fl_\ep$. 
Let $E(\Up)_c:= \{ \ep\in E(\Up):\ell_\ep \cong \bP^1 \}$ be the set of compact edges in $\Up$. (Note that $E(\Up)_c = E(\Up)$ if $\cX$ is proper.)

\item (Flags) The set of flags in the graph $\Up$ is given by 
$$
F(\Up)= \{ (\ep,\si)\in E(\Up)\times V(\Up): \si\in \ep\}= \{(\ep,\si)\in E(\Up)\times V(\Up): p_\si\in \ell_\ep\}.
$$
\item (Inertia) For each $\sigma\in V(\Up)$, we assign a finite group $G_{\sigma}$ 
which is the inertia group of $\fp_\si$, so that $\fp_\si= \cB G_{\si}$. For each $\ep\in E(\Up)$, we assign a finite group
$G_\ep$ which is the inertial group of $\fp_\ep$ in item (2) above.

\item For every flag $(\ep,\sigma)\in F(\Up)$, we choose a path from $\fp_\ep$ to $\fp_\si$, which determines an injective group homomorphism
$j_{(\ep,\sigma)}: G_\ep \to G_\si$. Let $\phi_{(\ep,\sigma)}: G_\si \to \bC^*$ be the group homomorphism which corresponds 
to the 1-dimensional $G_\si$ representation $T_{\fp_\si}\fl_\ep$. 
The image of $\phi_{(\ep,\sigma)}$ is a finite cyclic group; let
$r_{(\ep,\si)}$ be the cardinality of this finite cyclic group. We have the following
short exact sequence of finite groups:
$$
1\to G_{\ep} \stackrel{j_{(\ep,\si)}}{\longrightarrow} G_{\sigma} 
\stackrel{\phi_{(\ep,\si)}}{\longrightarrow} \mu_{r_{(\ep,\si)}} \to 1.
$$ 
So 
$$
r_{(\ep,\si)}=\frac{|G_{\sigma}|}{|G_{\ep}|}.
$$
\item (fundamental groups) For each compact edge $\ep\in E(\Up)_c$, there is a group homomorphism
$G_\ep \to \pi_1(\fl_\ep)$
whose kernel is a cyclic subgroup of  the center $Z(G_\ep)$ of $G_\ep$. Let $d_\ep$ be the cardinality of this cyclic subgroup. Then we have
an exact sequence  of finite groups:
$$
1\to \mu_{d_\ep}  \to G_\ep \to \pi_1(\fl_\ep) \to \pi_1(\fl_\ep^\rig) \to 1.
$$
\item (central extension of fundamental groups) For each compact edge $\ep\in E(\Up)_c$, let $\si_x,\si_y\in V(\Up)$ be the two ends of $\ep$, and let
$$
x = p_{\si_x},\quad y= p_{\si_y}, \quad r_x = r_{(\ep,\si_x)},\quad r_y = r_{(\ep,\si_y)},\quad a_\ep = g.c.d. (r_x,r_y).
$$
Then $\fl_\ep^\rig$ is the football $\cF(r_x,r_y)$, so that $\pi_1(\fl^\rig_\ep)=\mu_{a_\ep}$. 
There is a triple $(i_\ep, E_\ep,\rho_\ep)$, where $i_\ep:\bC^*\hookrightarrow E_\ep$ is a central injection and 
$E_\ep/\bC^* \cong \pi_1(\fl_\ep)$, $\rho_\ep = (\rho_x, \rho_y): E_\ep\to \bC^* \times \bC^*$ is a group homomorphism
and 
$$
\rho_\ep\circ i_\ep = (t^{d_\ep r_x/a_\ep}, t^{d_\ep r_y/a_\ep}).
$$  
We have an isomorphism  $[(\bC^2-\{(0,0)\}) /E_\ep]\cong \fl_\ep$, and under this isomorphism
$$
x=[1,0],\quad y=[0,1],\quad G_{\si_x}= \Ker(\rho_x),\quad G_{\si_y} =\Ker(\rho_y),\quad  G_\ep = \Ker(\rho).
$$

\item (connection) Let $\ep\in E(\Up)_c$, and let $\si_x$ and $\si_y$ be as above. Let
$E_x$ and $E_y$ be the set of edges emanating from $\si_x$ and $\si_y$, respectively.
The normal bundle $N_{\fl_\ep/\cX}$ of $\fl_\ep$ in 
$\cX$ is a direct sum of line bundles
$$
N_{\fl_\ep/\cX}= L_1\oplus \cdots \oplus L_{r-1}.
$$
For $i=1,\ldots, r-1$ there exist $\ep_i \in E_x$ and $\ep_i'\in E_y$ such that 
$(L_i)_x = T_x \fl_{\ep_i}$ and $(L_i)_y = T_y \fl_{\ep_i'}$. Then 
$$
E_x=\{\ep_1,\ldots, \ep_{r-1},\ep \},\quad E_y=\{ \ep'_1,\ldots, \ep'_{r-1},\ep\}.
$$
Define a bijection $\theta_{(\ep, \si_x)}: E_x \to E_y$ by sending $\ep_i$ to $\ep_i'$ and sending $\ep$ to $\ep$;
let $\theta_{(\ep,\si_y)}:E_y\to E_x$ be the inverse map. We say $\{ \ep_i, \ep'_i\}$ is a  pair of edges
related by the parallel transport along the compact edge $\ep$. There exists $\rho_i \in \Hom(E_\ep,\bC^*)$ such that 
$L_i = [((\bC^2-\{(0,0)\})\times \bC)/E]$ where $E$ acts by $\rho_x\oplus \rho_y \oplus \rho_i$.  Then 
$\rho_i\circ i_\ep:\bC^*\to \bC^*$ is given by $t\mapsto t^{d_i}$ for some $d_i\in \bZ$ and
$$
a_i:=\langle c_1(L_i),[\fl_\ep^\rig]\rangle = \frac{d_i}{l.c.m.(r_x,r_y)d_\ep}=\frac{d_i a_\ep}{r_x r_y d_\ep} \in \bQ.
$$
(Note that, if $\fl_\ep$ is the projective line $\bP^1$ then $r_x=r_y=a_\ep=d_\ep=1$, so $a_i=d_i\in \bZ$.) 
Let $\Delta_\ep$ be the set of pairs of edges related by the parallel transport along the compact edge $\ep$.
For each pair $\delta \in \Delta_\ep$,  we get $\rho_\delta \in \Hom(E_\ep,\bC^*)$ 
associated to a line bundle over $\fl_\ep$ which is a summand of $N_{\fl/\cX}$.

\item (compatibility along compact edges) 
$$
\rho_i|_{G_{\si_x}}=\phi_{(\ep_i,\si_x)},\quad 
\rho_i|_{G_{\si_y}}=\phi_{(\ep'_i,\si_x)},\quad
\rho_y|_{G_{\si_x}}=\phi_{(\ep, \si_x)},\quad
\rho_x|_{G_{\si_y}}=\phi_{(\ep, \si_y)}.
$$
\end{enumerate}

Assumption \ref{assume} can be rephrased in terms of the graph $\Up$ as follows.
\begin{assumption}
\begin{enumerate}
\item $V(\Up)$ is non-empty.
\item Each edge in $E(\Up)$ contains at least one vertex.
\end{enumerate}
\end{assumption}

Given a vertex $\si\in V(\Up)$, we denote by $E_\si$ the set of edges containing
$\si$, i.e. $E_\si:=\{e\in E:(\ep,\si)\in F(\Up)\}$. Then $|E_\si|=r$ for
all $\si\in V(\Up)$, so $\Up$ is an $r$-valent graph.

Given a flag $(\ep,\si)\in F(\Up)$, let 
$\bw_{(\ep,\si)} \in M_{\mathbb{Q}}$
be the weight of $T$-action on $T_{\fp_\si} \fl_\ep$, the tangent
line to $\fl_\ep$ at the fixed point $\fp_\si=\cB G_\si$, namely,
$$
\bw_{(\ep,\si)} := c_1^T(T_{\fp_\si}\fl_\ep) \in H^2_T(\fp_\si;\bQ)\cong M_{\bQ}. 
$$

This gives rise to a map 
$$
\bw:F(\Up)\longrightarrow M_{\bQ},\quad (\ep,\si)\mapsto \bw_{(\ep,\si)} 
$$
satisfying the following
properties.
\begin{enumerate}
\item (GKM hypothesis) Given any $\si\in V(\Up)$, and any 
two distinct edges $\ep, \ep'\in E_\si$, 
$\bw_{(\ep,\si)}$ and $\bw_{(\ep',\si)}$
are linearly independent in $M_\bQ$.

\item (integrality) For any flag $(\ep,\si)\in F(\Up)$, 
$\overline{\bw}_{(\ep,\si)}:= r_{(\ep,\si)} \bw_{(\ep,\si)}\in M$.

\item Suppose that $\ep\in E(\Up)_c$ is a compact edge and $\si_x, \si_y\in V(\Up)$ are its two ends.  
\begin{enumerate}
\item $r_{(\ep,\si_x)}\bw_{(\ep,\si_x)} + r_{(\ep,\si_y)}\bw_{(\ep,\si_y)}=0$, i.e.
$ \overline{\bw}_{(\ep,\si_x)} + \overline{\bw}_{(\ep,\si_y)}=0$.
\item Let $E_{\si_x}= \{ \ep_1,\ldots, \ep_r\}$, where
$\ep_r=\ep$, and let $\ep_i':=\theta_{(\si_x,\ep)}(\ep_i) \in E_{\si_y}$. 
Then
$$
\bw_{(\ep_i',\si_y)} =\bw_{(\ep_i,\si_x)}- a_i r_{(\ep,\si_x)} \bw_{(\ep,\si_x)}=\bw_{(\ep_i, \si_x)}+a_i r_{(\ep, \si_y)}\bw_{(\ep, \si_y)}.
$$
\end{enumerate}
\end{enumerate}
The normal
bundle of the 1-dimensional smooth DM stack $\fl_\ep$ in $\cX$ is given by
$$
N_{\fl_\ep/\cX}\cong L_1\oplus\cdots \oplus  L_{r-1}
$$
where $L_i$ is a degree $a_i$ $T$-equivariant line bundle
over $\fl_\ep$ such that the weights of the
$T$-action on the fibers $(L_i)_z$ and $(L_i)_y$
are $\bw_{(\ep_i,\si_x)}$ and $\bw_{(\ep_i',\si_y)}$, respectively.  
The map $\bw:F(\Up)\to M_{\bQ}$ is called the {\em axial function}.

We call $\vUp$, which is the abstract graph $\Up$ together with the above decorations and constraints, 
the {\em stacky GKM graph} of the smooth GKM stack $\cX$ with the $T$-action. 

If $\rho: T'\to T$ is a homomorphism between complex algebraic tori, then
$T'$ acts on $X$ by $t'\cdot x = \rho(t')\cdot x$, where $t'\in T'$, 
$\rho(t')\in T$, $x\in X$. If the zero-dimensional and one-dimensional orbits of this
$T'$-action coincide with those of the $T$-action, then the GKM graph with this $T'$-action is obtained by replacing
$\bw_{(\ep,\si)}\in M_\bQ$ by $\rho^*\bw_{(\ep,\si)}\in M'_\bQ$, where 
$$
\rho^*:  M_\bQ = H^2(BT;\bQ) \to M'_\bQ := H^2(BT';\bQ).
$$

\subsection{Equivariant Chen-Ruan orbifold cohomology group}
Let $\cX$ be a smooth GKM stack. The $T$-action on $\cX$ induces a $T$-action on 
its inertia stack $\cIX=\bigsqcup_{i\in I}\cX_i$ and on each $\cX_i$.

Let
$$
R_T:= H_T^*(\pt;\bQ)=H^*(BT;\bQ) =\bQ[u_1,\ldots,u_m]
$$
where $\deg(u_i)=2$; let $Q_T=\bQ(u_1,\ldots,u_m)$ be its fractional field.

As a graded $\bQ$-vector space,  $T$-equivariant Chen-Ruan orbifold cohomology group of an smooth GKM stack is defined to be
$$
H^*_{\CR, T} (\cX;\bQ): = \bigoplus_{a\in \bQ_{\geq 0}}H^a_{\CR,T}(\cX;\bQ)
$$
where
$$
H^a_{\CR,T}(\cX;\bQ)=\bigoplus_{i\in I} H^{a-2\age(\cX_i)}_T(\cX_i;\bQ).
$$

Suppose that $\cX$ is proper. For each $i\in I$, we have the following proper pushforward to a point:
$$
\int_\cX: H^*_T(\cX_i;\bQ) \longrightarrow H^*_T(\pt;\bQ) = R_T
$$
which is $R_T$-linear.
The $T$-equivariant orbifold Poincar\'{e} pairing is defined by 
\begin{equation}\label{eqn:T-pairing}
(\alpha,\beta)_T := 
\begin{cases} 
\int_{\cX_i} \alpha \cup \iota_i^* \beta, & j=\iota(i),\\
0, & j\neq \iota(i),
\end{cases}
\end{equation}
where $\alpha\in H^*_T(\cX_i;\bQ)$, $\beta\in H^*_T(\cX_j;\bQ)$. 
 
When $\cX$ is not proper, we define a $T$-equivariant Poincar\'{e} pairing on 
$$
H_{\CR,T}^*(\cX;Q_T)= H_{\CR,T}^* (\cX;\bQ)\otimes_{R_T} Q_T
$$
as follows:
\begin{equation}
(\alpha,\beta)_T := 
\begin{cases} 
\displaystyle{
\int_{\cX_i^T}\frac{(\alpha \cup \iota_i^* \beta)|_{\cX_i}}{e_T(N_{\cX_i^T/\cX_i})}, 
} & j=\iota(i),\\
0, & j\neq \iota(i),
\end{cases}
\end{equation}
where $\cX_i^T\subset \cX_i$ is the $T$ fixed substack, and $e_T(N_{\cX_i^T/\cX_i})$ is the $T$-equivariant Euler class of
the normal bundle $N_{\cX_i^T/\cX_i}$ of  $\cX_i^T$ in $\cX_i$. Each $\cX_i^T$ is a disjoint of union 
of finitely many (stacky) points.

\begin{example}[affine smooth GKM stack] \label{ex:affine}
Let $\cX=[\bC^r/G]$ be an affine smooth GKM stack. Let $\phi_i:G\to \bC^*$, $\bw_i\in M_\bQ$, and $r_i$  be defined as in 
Section \ref{sec:zero-dim}. Given $h\in G$, let $c_i(h)$ be the unique element in 
$$
\left\{ 0, \frac{1}{r_i},\ldots, \frac{r_i-1}{r_i} \right\}
$$ 
such that 
$$
e^{2\pi\sqrt{-1}c_i(h)} = \phi_i(h). 
$$
Then 
$$
\cI\cX = \bigsqcup_{c \in \Conj(G)} \cX_c,
$$
where
$$
\cX_c \cong [(\bC^r)^h/C_G(h)]
$$
for any $h\in c$.  We have
$$
\age(\cX_c) =\sum_{i=1}^r c_i(h)
$$
where $h$ is any element in the conjugacy class $c$.

Let $\one_c$ denote the identity element of $H_T^*(\cX_c;\bQ)$. Then
$$
H^*(\cX_c;\bQ) =\bQ \one_c,\quad
H_T^*(\cX_c;\bC) = R_T \one_c.
$$
So
$$
H_{\CR}(\cX;\bQ) = \bigoplus_{c\in \Conj(G)}  \bQ \one_c         
$$
as a $\bQ$ vector space, and
$$
H_{\CR,T}(\cX;\bQ) = \bigoplus_{c\in \Conj(G)} R_T \one_c
$$
as an $R_T$-module.

Given $c\in \Conj(G)$, define
$$
\be_c:= e_T(T_{[0/G]}(\bC^r)^h)=\prod_{i=1}^r \bw_i^{\delta_{c_i(h),0}}
$$ 
where $h$ is any element in $c$. Note that the right hand side of the above equation does not depend on the choice of $h\in c$.

Given $h\in G$, let $[h]=\{aha^{-1}:a\in G\}$ be the conjugacy class of $h$. 

The $T$-equivariant Poincar\'{e} pairing on 
$$
H_{\CR,T}(\cX;Q_T) =\bigoplus_{c \in \Conj(G)} Q_T \one_c
$$
is given by 
$$
( \one_{[h]}, \one_{[h']})_T = \frac{1}{|C_G(h)|}\frac{\delta_{[h^{-1}], [h']}}{\be_{[h]}}.
$$
\end{example}

\begin{definition}[equivariant formality]
Let $\cX$ be a smooth GKM stack, so that $T=(\bC^*)^m$ acts algebraically on $\cX$. We say
$\cX$ is equivariantly formal if
$$
H^*_{\CR,T}(\cX;\bQ) \to H^*_{\CR}(\cX;\bQ)
$$
is surjective. 
\end{definition}

Smooth toric DM stacks and affine smooth GKM stacks are equivariantly formal smooth GKM stacks.

\subsection{Cup product} \label{sec:product}
In this section, we describe the cup product on 
$$
H^*_{\CR,T}(\cX;Q_T),
$$
first for an affine smooth GKM stack, and then for any equivariantly formal smooth GKM stacks.

Given $c, c'\in \Conj(G)$, define
$$
c_i(c,c'): = c_i(h) + c_i(h')-c_i(hh') \in \{0,1\}.
$$
where $h\in c$ and $h'\in c'$; note that the right hand side of the above equation does not depend on choice of $h\in c$ and $h'\in c'$.
\begin{itemize}
\item Let $\cX=[\bC^r/G]$ be an affine smooth GKM stack as in Example \ref{ex:affine}. The cup product on $H_{\CR,T}^*(\cX;\bQ)$ is given by 
$$
\one_c \star \one_{c'} = \prod_{i=1}^r \bw_i^{c_i(c,c')} \sum_{h\in c, h'\in c'}
\frac{|C_G(hh')| }{|G|} \one_{[hh']}.
$$

\item Let $\cX$ be an equivariantly formal smooth GKM stack, 
and let $\vUp$ be the stacky GKM graph of $\cX$. Then we have an isomorphism of 
$Q_T$-algebras
\begin{equation}\label{eqn:CR-QT}
H_{\CR,T}^*(\cX;Q_T) \cong \bigoplus_{\si\in V(\Up)} H_{\CR,T}^*(T_{\fp_\si} \cX ;Q_T)
\end{equation}
which preserves the $T$-equivariant Poincar\'{e} pairing; the isomorphism \eqref{eqn:CR-QT} is an isomorphism of Frobenius algebras over the field $Q_T$. 
\end{itemize}

\section{Abstract stacky GKM graphs and formal smooth GKM stacks}\label{sec:abstract-formal}

Let $\cX$ be a smooth GKM stack equipped with a $T$-action. The formal completion $\hat{\cX}$ of $\cX$ along its
1-skeleton $\cX^1$, together with the $T$-action inherited from $\cX$, can be reconstructed from 
the stacky GKM graph of $\cX$. In this section,  we will define abstract stacky GKM graphs which are generalization of 
stacky GKM graphs of smooth GKM stacks.  Given an abstract stacky GKM graph, we will construct a formal smooth GKM stack, 
which is a formal smooth DM stack together with an action by an algebraic torus $T=(\bC^*)^m$. The construction of a formal
smooth GKM stack from an abstract stacky GKM graph can be viewed as generalization of the reconstruction of $\hat{\cX}$ from
the stacky GKM graph of a smooth GKM stack $\cX$, and is inspired by the construction of a formal toric Calabi-Yau (FTCY) 
threefold from 
an FTCY graph in \cite[Section 3]{LLLZ}.

\subsection{Abstract stacky GKM graphs}\label{sec:abstract}
We fix $T=(\bC^*)^m$ and a positive integer $r$. An {\em abstract stacky GKM graph} is a decorated graph consisting of the following data. 
\begin{enumerate}
\item (graph)
The underlying graph $\Gamma$ is a connected $r$-valent graph $\Gamma$ with finitely many vertices and edges. 
Let $V(\Up)$ (resp. $E(\Up)$) denote the set of vertices (resp. edges) in $\Gamma$.
Each edge in $E(\Up)$ is either a compact edge connecting two vertices or a ray emanating from one vertex.
Let $E(\Up)_c\subset E(\Up)$ be the set of compact edges. Let 
$$
F(\Up)=\{ (\ep,\si)\in E(\Up)\times V(\Up): \si\in \ep\}
$$
be the set of flags in $\Gamma$. Given a vertex $\si\in V(\Up)$, let 
$$
E_\si:= \{ \ep \in E(\Up): (\ep,\si)\in F(\Up)\}
$$ 
be the set of edges emanating from the vertex $v$. By the $r$-valent condition, 
$|E_\si|=r$ for all $r\in V(\Up)$.

\item (inertia and tangent representations) 
Each vertex $\si \in V(\Up)$ (resp. edge $\ep \in E(\Up)$) is decorated by a finite group
$G_\si$ (resp. $G_\ep$). Each flag $(\ep,\si)\in F(\Up)$ is decorated by 
\begin{itemize}
\item an injective group homomorphism $j_{(\ep,\si)}:G_\ep \hookrightarrow G_\si$, and
\item a one-dimensional representation  $\phi_{(\ep,\si)}: G_\si\to GL(1,\bC)= \bC^*$, 
\end{itemize}
such that $\mathrm{Im}(j_{(\ep,\si)}) = \Ker (\phi_{(\ep,\si)})$. \\
Note that the image of 
$\phi_{(\ep,\si)}$ is a finite cyclic group; let $r_{(\ep,\si)}$ be the cardinality of
this finite cyclic group. Then we have a short exact sequence of finite groups:
$$
1\to G_\ep\stackrel{j_{(\ep,\si)}}{\longrightarrow} G_\si \stackrel{\phi_{(\ep,\si)} }{\longrightarrow}  
\mu_{r_{(\ep,\si)}} \to 1.
$$

\item (fundamental groups and central extensions) Let $\ep\in E(\Up)_c$ be a compact edge, and let $\si_x, \si_y
\in V(\Up)$ be the two ends of $\ep$. Let $a_\ep = g.c.d. (r_{(\ep,\si_x)}, r_{(\ep,\si_y)})$. In addition 
to $G_\ep$, $\ep$ is decorated by:
\begin{itemize} 
\item Another finite group $\pi_\ep$ together with a group homomorphism 
$G_\ep \to \pi_\ep$ such that we have the following exact sequence of finite groups:
$$
1\to \mu_{d_\ep} \to G_\ep \to \pi_\ep \to \mu_{a_\ep}\to 1
$$
where $\mu_{d_\ep}$ is contained in the center of $G_\ep$. 
\item A central extension $1\to \bC^* \stackrel{i_\ep}{\to} E_\ep \to \pi_\ep \to 1$ of $\pi_\ep$ by $\bC^*$,
and a group homomorphism $\rho_\ep = (\rho_x,\rho_y): E_\ep\to \bC^*\times \bC^*$. 
\end{itemize}

\item (connection) Let $\ep \in E_c(\Up)$ be a compact edge, and let $\si_x, \si_y \in V(\Up)$ be the two ends
of $\ep$. There are bijections $\theta_{(\ep,\si_x)}: E_{\si_x}\to E_{\si_y}$ and
$\theta_{(\ep,\si_y)}: E_{\si_y}\to E_{\si_x}$ which are inverses of each other and send $\ep$ to $\ep$. 

\item (normal representations) Suppose that $\ep\in E_c(\Up)$ is a compact edge, $\si_x,\si_y\in V(\Up)$ are two ends of 
$\ep$, and $\delta=\{ \ep_x, \ep_y \}$ is a pair of edges such that $\ep_x \in E_{\si_x}-\{\ep\}$ and $\ep_y =\theta_{(\ep,\si_x)}(\ep_x) \in E_{\si_y}-\{\ep\}$.
Such a pair is decorated by a one-dimensional representation $\rho_\delta: E_\ep\to GL(1,\bC)=\bC^*$.

\item (compatibility along compact edges) In the notation of (3), (4), (5) above, $\Ker(\rho_x)=G_{\si_x}$, $\Ker(\rho_y)=G_{\si_y}$, $\Ker(\rho_\ep)=G_\ep$,
$$
\rho_y|_{G_{\si_x}} = \phi_{(\ep,\si_x)},\quad
\rho_x|_{G_{\si_y}} = \phi_{(\ep,\si_y)},\quad
\rho_\delta|_{G_{\si_x}} = \phi_{(\ep_x,\si_x)},\quad
\rho_\delta|_{G_{\si_y}} =\phi_{(\ep_y,\si_y)}.
$$

\item (axial function) There is a map 
$$
\bw: F(\Up)\to M_\bQ,\quad (\ep,\si)\mapsto \bw_{(\ep,\si)}  
$$
satisfying the following properties.
\begin{enumerate} 
\item (GKM hypothesis) Given any $\si\in V(\Up)$ and any two distinct edges $\ep,\ep'\in E_\si$,
$\bw_{(\ep,\si)}$ and $\bw_{(\ep',\si)}$ are linearly independent vectors in $M_\bQ$.

\item (integrality)  For any $(\ep,\si)\in F(\Up)$, $\overline{\bw}_{(\ep,\si)}:= r_{(\ep,\si)}\bw_{(\ep,\si)}\in M$.

\item For any compact edge $\ep\in E(\Up)_c$, let $\si_x, \si_y \in V(\Up)$ be its two ends. Then the following properties hold. 
\begin{enumerate}
\item $r_{(\ep,\si_x)} \bw_{(\ep,\si_x)} + r_{(\ep,\si_y)}\bw_{(\ep,\si_y)}=0$, i.e.,
$\overline{\bw}_{(\ep,\si_x)} + \overline{\bw}_{(\ep,\si_y)} =0$.
\item Suppose that $E_\si = \{ \ep_1,\ldots, \ep_r\}$, where $\ep_r=\ep$. Let
$\ep_i':= \theta_{(\ep,\si)}(\ep_i)\in E_{\si'}$, so that $E_{\si'}=\{\ep_1',\ldots, \ep_r'\}$. Let
$$
a_i = \frac{d_i a_\ep}{r_{(\ep,\si_x)}r_{(\ep,\si_y)}d_\ep}
$$
where $d_i\in \bZ$ is determined by by $\rho_{\{ \ep_i,\ep'_i\}}\circ i_\ep(t)=t^{d_i}$ for $t\in \bC^*$.  Then 
$$
\bw_{(\ep_i',\si_y)} = \bw_{(\ep_i,\si_x)}-a_i r_{(\ep,\si_x)} \bw_{(\ep,\si_x)}
=\bw_{(\ep_i,\si_i)}+a_i r_{(\ep,\si_y)} \bw_{(\ep,\si_y)},
$$ 
or equivalently, 
$$
\bw_{(\ep_i',\si_y)}= \bw_{(\ep_i,\si_x)}-a_i \overline{\bw}_{(\ep,\si_x)}
=\bw_{(\ep_i,\si_i)}+a_i \overline{\bw}_{(\ep,\si_y)},
$$
In particular, $\ep_r'=\ep_r=\ep$ and $a_r = \frac{1}{r_{(\ep,\si_x)}} +\frac{1}{r_{(\ep,\si_y)}}$.  
\end{enumerate}
\end{enumerate}

Let $\vUp$ denote the underlying abstract graph $\Up$ together with all the above decorations.
\end{enumerate}

\begin{remark} We may also define abstract GKM graphs by the following specialization. 
\begin{itemize}
\item All the finite groups $G_\si$, $G_\ep$, $\pi_\ep$ are trivial,  and we always have $E_\ep =\bC^*$ and
$\rho_x, \rho_y:\bC^*\to \bC^*$ are identity maps.  So we do not need (2), (3), (6) above.
\item In (7), the axial function $\bw$ takes value in $M$ instead of $M_\bQ$, and
$$
r_{(\ep,\si)}=r_{(\ep,\si')}=1, \quad a_\ep = d_\ep =1, \quad a_i=d_i \in \bZ. 
$$
\item The normal characters in (5) are determined by the axial function.
\end{itemize}
Abstract GKM graphs are generalization of GKM graphs of algebraic GKM manifolds \cite[Section 2.2]{LS}.
\end{remark}

\subsection{Formal smooth GKM stacks}

Given an abstract stacky GKM graph $\vUp$ defined as in the previous subsection, we will construct a formal smooth DM stack $\hat{\cX}_{\vUp}$ of dimension $r$ equipped with an action of $T=(\bC^*)^m$.

For any flag $(\ep,\si)\in F(\Up)$, define a ``stacky'' affine line 
$$
D_{(\ep,\si)}:= [ \Spec\bC[z_{(\ep,\si)}]/G_\si] \cong [\bA^1/G_\si]
$$
where $G_\si$ acts on $\bA^1$ via the group homomorphism $\phi_{(\ep,\si)}:G_\si\to \bC^*$. 
The coarse moduli space of $D_{(\ep,\si)}$ is 
$$
\Spec(\bC[z_{(\ep,\si)}]^{G_\si}) =\Spec(\bC[z_{(\ep,\si)}^{r_{(\ep,\si)}}]) =
\Spec(\bC[x_{(\ep,\si)}]) \cong \bA^1
$$
where $x_{(\ep,\si)}= (z_{(\ep,\si)})^{r_{(\ep,\si)}}$.

For any vertex $\si\in V(\Up)$, define an $r$-dimensional affine smooth GKM stack
$$
\cX_\si=  [\Spec \bC[z_{(\ep,\si)}: \ep\in E_\si] /G_\si] = [\bA^{E_v}/G_\si].
$$
The $T$-action on $z_{(\ep,\si)}$ is determined by $\bw_{(\ep,\si)}\in M_\bQ$. 
Let $\hat{\cX}_\si$ be the formal completion of $\cX_\si$ along its 1-skeleton.

For any compact edge $\ep\in E(\Up)_c$, define
$$
\fl_\ep:=[(\bC^2-\{0\})/E_\ep]
$$
where the action of $E_\ep$ is given by the group homomorphism $\rho_\ep: E_\ep\to \bC^*\times \bC^*$.
Let $\si_x,\si_y\in V(\Up)$ be its two ends. 
Suppose that $E_{\si_x}=\{\ep_1,\ldots, \ep_{r-1},\ep\}$, and let
$\ep'_i=\theta_{(\ep,\si_x)}(\ep_i) \in E_{\si_y}-\{ \ep\}$. Let
$\rho_i = \rho_{\{\ep_i,\ep'_i\}} \in \Hom(E_\ep,\bC^*)$. Let $L_i$ be the line bundle over the
smooth DM curve $\fl_\ep$ defined by 
$$
L_i = [\left( (\bC^2-\{0\})\times \bC\right)/E_\ep]
$$
where the action on the last factor $\bC$ is given by the group homomorphism $\rho_i:E_\ep\to \bC^*$.
Let $\cX_\ep$ be the total space of $L_1\oplus \cdots \oplus L_{r-1}$, which is a smooth GKM stack.
Let $\hat{\cX}_\ep$ be the formal completion of $\cX_\ep$ along its 1-skeleton $\cX^1_\ep$.
By compatibility along compact edges, there are  $T$-equivariant open embeddings of formal smooth DM stacks: 
$$
i_{(\ep,\si_x)}: \hat{\cX}_{\si_x}\hookrightarrow \hat{\cX}_\ep,\quad 
i_{(\ep,\si_y)}: \hat{\cX}_{\si_y}\hookrightarrow \hat{\cX}_\ep.
$$

For any $\si\in V(\vUp)$, define 
$$
\hat{i}_\si:=\prod_{\ep\in E_\si\cap E(\Up)_c} i_{(\ep,\si)}: \hat{\cX}_\si
\longrightarrow \bigcup_{\ep\in E(\Up)_c} \hat{\cX}_\ep. 
$$
The formal DM stack $\hXY$ is defined to be the fiber product of the maps
$$
\Big\{ \hat{i}_\si: \hat{\cX}_\si \longrightarrow\bigcup_{\ep\in E(\Up)_c}\hat{\cX}_\ep \mid \si\in V(\Up)\Big\}. 
$$

If $\vUp$ is the stacky GKM graph of an smooth GKM stack $\cX$ then $\hXY$ is the formal completion of $\cX$ along
its 1-skeleton $\cX^1$.

\subsection{Equivariant Chen-Ruan orbifold cohomology of an abstract stacky GKM graph}
Given a stacky GKM graph $\vUp$,we define
$$
\cH_{\vUp} := \bigoplus_{\si \in V(\Up)}   H^*_{\CR,T}(\cX_\si; Q_T)
$$
as a Frobenius algebra over the field $Q_T$. By Section \ref{sec:product}, if $\vUp$ is the stacky GKM graph 
of an equivariantly formal smooth GKM stack $\cX$ then 
$$
\cH_{\vUp} = H^*_{\CR,T}(\cX;Q_T).
$$

\section{Orbifold Gromov-Witten theory}\label{sec:orbGW}
In \cite{CR2}, Chen-Ruan developed Gromov-Witten theory for symplectic orbifolds. 
The algebraic counterpart, Gromov-Witten theory for smooth DM stacks, was developed by Abramovich-Graber-Vistoli \cite{AGV1, AGV2}. 
In this section, we give a brief review of algebraic orbifold Gromov-Witten theory, following \cite{AGV2}.

\subsection{Twisted curves and their moduli} \label{sec:tw-curves}

An $n$-pointed, genus $g$  twisted curve is a connected proper one-dimensional
DM stack $\cC$ together with $n$ disjoint closed substacks $\fx_1,\ldots,\fx_n$ of $\cC$, such that
\begin{enumerate}
\item  $\cC$ is \'{e}tale locally a nodal curve; 
\item formally locally near a node, $\cC$ is isomorphic
to the quotient stack
$$
[\Spec(\bC[x,y]/(xy))/\mu_r],
$$ 
where the action of $\zeta\in \mu_r$ is given by $\zeta\cdot (x,y)= (\zeta x, \zeta^{-1}y)$;
\item each $\fx_i\subset \cC$ is contained in the smooth locus of $\cC$;
\item each stack $\fx_i$ is an  \'{e}tale gerbe over $\Spec\bC$ {\em with a section} (hence
trivialization); 
\item $\cC$ is a scheme outside the twisted points $\fx_1,\ldots, \fx_n$ and the singular locus;
\item the coarse moduli space $C$ is a nodal curve of arithmetic genus $g$.
\end{enumerate}
Let $\pi:\cC\to C$ be the projection to the coarse moduli space, and let
$x_i=\pi(\fx_i)$. Then $x_1,\ldots, x_n$ are distinct smooth points of
$C$, and $(C,x_1,\ldots, x_n)$ is an $n$-pointed, genus $g$ prestable curve.

Let $\cM^\tw_{g,n}$ be the moduli of $n$-pointed, genus $g$
twisted curves. Then $\cM^\tw_{g,n}$ is a smooth
algebraic stack, locally of finite type \cite{Ol}.

\subsection{Riemann-Roch theorem for twisted curves} \label{sec:orbRR}
Let $(\cC,\fx_1,\ldots, \fx_n)$ be an $n$-pointed, genus $g$ twisted curve, and
let $(C,x_1,\ldots, x_n)$ be the coarse curve, which is an $n$-pointed, genus
$g$ prestable curve.  Let $\cE \to \cC$ be a vector bundle over $\cC$. 
Then $\fx_i\cong \cB\mu_{r_i}$. There is a unique generator $\zeta$ of the cyclic group $\mu_{r_i}$ such that $\zeta$
acts on the tangent line $T_{\fx_i}\cC$ with eigenvalue $e^\frac{2\pi\sqrt{-1}}{r_i}$. Then
$\zeta$ acts on $\cE|_{\fx_i}$ with eigenvalues $e^{ \frac{2\pi\sqrt{-1}}{r_i} l_1 },\ldots, e^{ \frac{2\pi\sqrt{-1}}{r_i} l_N }$,
where $N=\rank \cE$ and  $l_i\in \{0,1,\ldots, r_i-1\}$. In other words,
$$
\cE|_{\fx_i} = \bigoplus_{i=1}^N (T_{\fx_i}\cC)^{\otimes l_i}
$$
as vector bundles over $\fx_i=\cB \mu_{r_i}$. Note that $l_1,\ldots, l_N$ are unique
up to permutation, so 
$$
\age_{x_i}(\cE) := \frac{l_1+\cdots + l_N}{r_i} \in \bQ
$$
is well-defined.  The Riemann-Roch theorem for twisted curves says
\begin{equation}\label{eqn:twistedRR}
\chi(\cE)=\int_{\cC}c_1(\cE) + \rank(\cE)(1-g) -\sum_{i=1}^n \age_{x_i}(\cE).
\end{equation}


\subsection{Moduli of twisted stable maps} \label{sec:twisted}
Let $\cX$ be a smooth DM stack with a quasi-projective
coarse moduli space $X$, and let $\beta\in H_2(X;\bZ)$ be an effective curve class in 
$X$.  An $n$-pointed, genus $g$, degree $\beta$ twisted
stable map to $\cX$ is a representable morphism $f:\cC\to \cX$, where
the domain $\cC$ is an $n$-pointed, genus $g$ twisted curve,
and the induced morphism $C\to X$ between the coarse moduli spaces
is an $n$-pointed, genus $g$, degree $\beta$ stable map to $X$.

Let $\MgcX$ be the moduli stack of $n$-pointed, genus $g$, degree $\beta$ twisted
stable maps to $\cX$. Then $\MgcX$ is a DM stack;  it is proper if $X$ is projective.

For $j=1,\ldots, n$, there are evaluation maps
$$
\ev_j:\MgcX\to \cIX = \bigsqcup_{i\in I}\cX_i
$$
where $\{\cX_i: i\in I\}$ are connected components of $\cI\cX$.
Given $\vi=(i_1,\ldots,i_n)$, where $i_j\in I$, define
$$
\MgXi :=\bigcap_{j=1}^n \ev_j^{-1}(\cX_{i_j}). 
$$
Then $\MgXi$ is a union of connected components
of $\MgcX$, and  $$
\MgcX=\bigsqcup_{\vi\in I^n}\MgXi.
$$
\begin{remark}
In the definition of twisted curves in Section \ref{sec:tw-curves}, if we replace (4) by
\begin{enumerate}
\item[(4)'] each stack $\fx_i$ is an  \'{e}tale gerbe over $\Spec\bC$;
\end{enumerate}
i.e. without a section, then the resulting moduli space is $\cK_{g,n}(\cX,\beta)$ in \cite{AGV2}, 
and the evaluation maps take values in the rigidified inertial stack $\overline{\cI}{\cX}$ instead of the inertia
stack $\cI\cX$.
\end{remark}

\subsection{Obstruction theory and virtual fundamental classes}

The tangent space $T^1_\xi$ and the obstruction space $T^2_\xi$ at
a moduli point $\xi=[f:(\cC, \fx_1,\ldots, \fx_n)\to \cX] \in \MgcX$ fit
in the {\em tangent-obstruction exact sequence}:
\begin{equation}\label{eqn:tangent-obstruction}
\begin{aligned}
0 \to& \Ext^0_{\cO_\cC}(\Omega_\cC(\fx_1+\cdots + \fx_n) , \cO_\cC)\to H^0(\cC,f^*T_\cX) \to T^1_\xi  \\
  \to& \Ext^1_{\cO_\cC}(\Omega_\cC(\fx_1+\cdots + \fx_n), \cO_\cC)\to H^1(\cC,f^*T_\cX)\to T^2_\xi \to 0
\end{aligned}
\end{equation}
where
\begin{itemize}
\item $\Ext^0_{\cO_\cC}(\Omega_\cC(\fx_1+\cdots+\fx_n),\cO_\cC)$ is the space of infinitesimal automorphisms of the domain $(\cC, \fx_1,\ldots, \fx_n)$, 
\item $\Ext^1_{\cO_\cC}(\Omega_\cC(\fx_1+\cdots + \fx_n), \cO_\cC)$ is the space of infinitesimal 
deformations of the domain $(\cC, \fx_1, \ldots, \fx_n)$, 
\item $H^0(\cC,f^*T_\cX)$ is the space of
infinitesimal deformations of the morphism $f$ for a fixed domain, and 
\item $H^1(\cC,f^*T_\cX)$ is
the space of obstructions to deforming the morphism $f$ for a fixed domain.
\end{itemize}
$T_\xi^1$ and $T_\xi^2$ are fibers of coherent sheaves $T^1$ and $T^2$ on the
moduli space $\MgXi$. The moduli space $\MgXi$ is equipped with a perfect obstruction theory
of virtual dimension $d^\vir_{g,\vi,\beta}$, where
\begin{equation}\label{eqn:vir-dim}
d^\vir_{g,\vi,\beta} =\int_\beta c_1(T_\cX) + (\dim \cX-3)(1-g) +n -\sum_{j=1}^n\age(\cX_{i_j}).
\end{equation}
Locally there is a two term complex $[E \to F]$ of locally free sheaves such that 
$$
\rank E -\rank F = d^\vir_{g,\vi,\beta}
$$
and $T^1$ and $T^2$ are the kernel and cokernel of $E\to F$, i.e.,
\begin{equation}\label{eqn:EF}
0 \to T^1 \to E \to F \to T^2 \to 0
\end{equation}
is an exact sequence of sheaves of $\cO_{\MgXi}$-modules. This determines a virtual fundamental class
(constructed in the algebraic setting in \cite{BeFa, LiTi1}): 
$$
[\MgXi]^\vir\in A_{d^\vir_{g,\vi,\beta}}(\MgXi;\bQ).
$$ 
Given a pair $(x,g)\in Ob(\cI \cX)$, where $x\in Ob(\cX)$ and $g\in \Aut_{\cX}(x)$, define
$r(x,g) = |\langle g\rangle|$. Then $(x,g)\mapsto |\langle g\rangle|$ defines a map
$r: \cI\cX \to \bZ_{>0}$ which is constant on each connected component $\cX_i$ of $\cI\cX$. Let
$r_i= r(\cX_i)$. The {\em weighted virtual fundamental class} is defined by 
$$
[\MgXi]^w := \Bigl(\prod_{j=1}^n r_{i_j}\Bigr) [\MgXi]^\vir.
$$

\subsection{Moduli of twisted stable maps to a formal smooth GKM stack} \label{sec:MghX}
Let $\hXY$ be the formal smooth GKM stack defined by an abstract stacky GKM graph $\vUp$, and
let $\hat{X}_{\vUp}$ be its coarse moduli space. Then 
$$
H_2(\hat{X}_{\vUp};\bZ) =\bigoplus_{\ep \in E(\Up)_c} \bZ [\ell_\ep].
$$
Let
$$
\Eff(\hXY)=\Big\{ \sum_{\ep\in E(\Up)_c} d_\ep [\ell_\ep]: d_\ep \in \bZ_{\geq 0} \Big\} \subset H_2(\hat{X}_{\vUp};\bZ)
$$
be the set of effective classes.  Given $g,n\in \bZ_{\geq 0}$ and $\hbeta\in \Eff(\hXY)$, let $\MghX$ be the moduli of genus $g$, $n$-pointed, 
degree $\hbeta$ twisted stable maps to $\hXY$, which is the analogue of $\MgX$ defined in Section \ref{sec:twisted}.
Let
$$
\cI \hXY =\bigsqcup_{i\in \IY} (\hXY)_i
$$
be disjoint union of connected components. Let $\MghXi$ be the analogue of $\MgXi$. Then we have a disjoint union
$$
\MghX =\bigsqcup_{\vi \in \IY^n} \MghXi.
$$
Each $\MghXi$ is equipped with a $T$-action together with a $T$-equivariant perfect obstruction theory of virtual dimension 
$d^\vir_{g,\vi,\hbeta}$, where 
$$
d^\vir_{g,\vi,\hbeta} =\int_{\hbeta} c_1(T_{\hXY}) + (\dim \hXY-3)(1-g) +n -\sum_{j=1}^n\age((\hXY)_{i_j}).
$$

Let $\cX$ be a smooth GKM stack, and let $\vUp$ be its stacky GKM graph. There is a $T$-equivariant morphism 
$j: \hXY \to \cX$, which induces a $T$-equivariant morphism 
$$
\cI \hXY=\bigsqcup_{ i \in I_\vUp }(\hat{\cX}_\vUp)_i.  \longrightarrow \cI\cX =  \bigsqcup_{i\in I} \cX_i. 
$$
and a surjective group homomorphism $j_*: H_2(\hat{X}_\vUp;\bZ)\to H_2(X;\bZ)$. Define $j:I_{\vUp}\to I$ such that
$j( (\hXY)_i) =\cX_{j(i)}$. We have $T$-equivariant morphisms
$$
\Mbar_{g,n}(\hXY,\hat{\beta}) \to \Mbar_{g,n}(\cX,j_*\hat{\beta}),\quad
\quad \Mbar_{g, (i_1,\ldots, i_n)}(\hat{\cX}_\vUp ,\hat{\beta}) 
\to \Mbar_{g, (j(i_1),\ldots, j(i_n))}(\cX,j_*\hat{\beta}).
$$

\subsection{Hurwitz-Hodge integrals}
\label{sec:hurwitz-hodge}

By Example \ref{ex:BG}, when $\cX=\cB G$ we have
$$
\cI \cB G =\bigsqcup_{c\in \Conj(G)} (\cB G)_c
$$
where $\Conj(G)$ is the set of conjugacy classes of $G$.
Give $\vc=(c_1,\ldots,c_n) \in \Conj(G)^n$, let 
$\MBGc = \Mbar_{g,\vc}(\cB G,\beta=0)$.
Then $\MBGc$ is a union of connected components
of $\MBG:=\Mbar_{g,n}(\cB G,0)$, and 
$$
\MBG=\bigsqcup_{\vc\in \Conj(G)^n}\MBGc. 
$$

We now fix a genus $g$ and $n$ conjugacy classes $\vc=(c_1,\ldots,c_n)\in \Conj(G)^n$.
Let $\pi: \cU\to \MBGc$ be the universal curve, and let $f: \cU\to \cB G$ be the universal
map. Let $\rho: G\to GL(V)$ be an irreducible representation of $G$, where $V$ is a finite
dimensional vector space over $\bC$. Then $\cE_\rho := [V/G]$ is
a vector bundle over $\cB G = [\pt/G]$. We have
$$
\pi_* f^* \cE_\rho =\begin{cases}
\cO_{\MBGc}, & \textup{if $\rho: G\to GL(1,\bC)$ is the trivial representation}, \\
0, & \textup{otherwise}.
\end{cases}
$$
The $\rho$-twisted Hurwitz-Hodge bundle $\bE_\rho$ can be defined as the 
dual of the vector bundle $R^1\pi_* f^* \cE_\rho$.   
If $\rho=1$ is the trivial representation, then
$\bE_1 = \ep^*\bE$, where $\ep:\Mbar_{g,\vc}(\cB G) \to \Mbar_{g,n}$, and
$\bE \to \Mbar_{g,n}$ is the Hodge bundle of $\Mbar_{g,n}$. So
$\rank \bE_1 =g$. If $\rho$ is a nontrivial irreducible representation, it follows
from the Riemann-Roch theorem for twisted curves (see Section \ref{sec:orbRR})  that
\begin{equation}
\rank \bE_\rho = \rank (\cE_\rho) (g-1) + \sum_{j=1}^n \age_{c_j}(\cE_\rho),
\end{equation}
where $\age_{c_j}(\cE_\rho)$ is given as follows.  Choose $g\in c_j$. Let $s>0$ be 
the order of $g$ in $G$,  let $N=\rank \cE_\rho =\dim V$.
If the eigenvalues of $\rho(g)\in GL(V)=GL(N,\bC)$ are $\{ e^{\frac{2\pi\sqrt{-1}}{s} l_i}: 1\leq i\leq N \}$,
where $l_i \in\{0,1,\ldots, s-1\}$,  then
$$
\age_{c_j}(\cE_\rho)=\frac{l_1+\cdots + l_N}{s}. 
$$
The definition is independent of choice of $g\in c_j$. Note that
$$
\sum_{j=1}^n\age_{c_j}(\cE_\rho) \in \bZ. 
$$
(If $G$ is {\em abelian} then any irreducible representation of $G$ is 1-dimensional,
so in this case $\rank(\cE_\rho)=1$ for any irreducible representation $\rho$ of $G$.)

\medskip

\begin{itemize}
\item {\em Hodge classes.} Given an irreducible representation $\rho$ of $G$, define
$$
\lambda_i^\rho = c_i(\bE_\rho) \in H^{2i}(\MBGc;\bQ), \quad i=1,\ldots, \rank\bE_\rho. 
$$

\item {\em Descendant classes}.
There is a forgetful map $\epsilon: \MBGc\to \Mbar_{g,n}$. Define 
$$
\bar{\psi}_j = \epsilon^*\psi_j \in H^2(\MBGc),\quad j=1,\ldots,n.
$$
\end{itemize}

{\em Hurwitz-Hodge integrals} are top intersection numbers of
Hodge classes $\lambda_i^\rho$  and  descendant classes $\bar{\psi}_j$: 
\begin{equation}
\int_{\MBGc}\bar{\psi}_1^{a_1}\cdots \bar{\psi}_n^{a_n}
(\lambda_1^{\rho_1})^{k_1}\cdots (\lambda_g^{\rho_g})^{k_g}.
\end{equation}
In \cite{Zh}, Jian Zhou describes an algorithm of computing Hurwitz-Hodge integrals, as
follows. By Tseng's orbifold quantum Riemann-Roch theorem \cite{Ts}, Hurwitz-Hodge integrals 
can be reconstructed from descendant integrals on $\MBGc$:
\begin{equation}
\int_{\MBGc}\bar{\psi}_1^{a_1}\cdots \bar{\psi}_n^{a_n}. 
\end{equation}
Jarvis-Kimura relate the descendant integrals on $\MBGc$ to those on $\Mbar_{g,n}$ \cite{JK}. 
We now state their result. Given $g\in \bZ_{\geq 0}$ and $\vc=(c_1,\ldots,c_n)\in \Conj(G)^n$, let
\begin{eqnarray*}
V^G_{g,\vc}&:=& \Big\{(a_1,b_1,\ldots, a_g, b_g, e_1,\ldots, e_n)\in G^{2g+n} \mid \\
&&  \quad \quad  a_1 b_1 a_1^{-1}b_1^{-1}\cdots a_g b_g a_g^{-1}b_g^{-1}= \prod_{j=1}^n e_j,\ \   e_j\in c_j\Big\}
\end{eqnarray*}
which is a finite set. The moduli of flat $G$-bundles over a compact Riemann surface of genus $g$ with markings
$c_1,\ldots, c_n$ is the quotient stack
$$
[V^G_{g,\vc}/G]
$$
where $G$ acts on $V^G_{g,\vc}$ by diagonal conjugation: 
$$
h\cdot (a_1,b_1,\ldots, a_g, b_g, e_1,\ldots, e_n) =
(h a_1 h^{-1}, h b_1 h^{-1},\ldots, h a_g h^{-1}, h b_g h^{-1}, h e_1 h^{-1}, \ldots, h e_n h^{-1}).
$$
\begin{example} If $h\in G$ then
$$
[V^G_{0,[h], [h^{-1}]}/G] \cong [\{ (h, h^{-1})\}/C_G(h)] = \cB C_G(h)
$$
where $C_G(h)$ is the centralizer of $h$ in $G$. 
\end{example}
If $G$ is {\em abelian} then each $c_i$ consists of a single element $h_i\in G$, and 
$$
V^G_{g,\vc} =\begin{cases}
G^{2g}\times \{h_1\}\times \cdots \{ h_n\} & \textup{if }h_1\cdots h_n=1,\\
\emptyset \quad \textup{(the emptyset)} & \textup{if }h_1\cdots h_n \neq 1;
\end{cases}
$$
so $[V^g_{g,\vc}/G] \cong G^{2g}\times \cB G$ if $h_1\cdots h_n=1$, and is empty otherwise.
In general, $\Mbar_{g,\vc}(\cB G)$ is non-empty if and only if $2g-2+n>0$ and $V^G_{g,\vc}$ is non-empty. 
In this case, the forgetful map $\ep:\Mbar_{g,\vc}(\cB G) \to \Mbar_{g,n}$ is of degree $|V^G_{g,\vc}|/|G|$.

\begin{theorem}[{Jarvis-Kimura \cite[Proposition 3.4]{JK}}]\label{thm:orb-psi}
Suppose that $2g-2+n>0$ and $V^G_{g,\vc}$ is nonempty. Then
$$
\int_{\Mbar_{g,\vc}(\cB G)}\bar{\psi}_1^{a_1} \cdots \bar{\psi}_n^{a_n} =\frac{|V^G_{g,\vc}|}{|G|}\int_{\Mbar_{g,n}} \psi_1^{a_1}\cdots \psi_n^{a_n}.
$$
\end{theorem}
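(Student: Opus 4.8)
The plan is to reduce the orbifold descendant integral on $\Mbar_{g,\vc}(\cB G)$ to the corresponding Deligne-Mumford integral on $\Mbar_{g,n}$ by analyzing the forgetful morphism $\ep:\Mbar_{g,\vc}(\cB G)\to \Mbar_{g,n}$ directly. The key geometric input is the identification of $\Mbar_{g,\vc}(\cB G)$ with (essentially) the quotient stack parametrizing flat $G$-bundles with prescribed monodromy along the marked points. Concretely, a twisted stable map to $\cB G$ over a base is the same data as a principal $G$-bundle on the twisted curve, and on a smooth curve of genus $g$ with $n$ marked points such a bundle is determined by its monodromy representation $\pi_1(C\setminus\{x_1,\dots,x_n\})\to G$; imposing that the loop around $x_j$ maps into the conjugacy class $c_j$ (with the twisting order $r_j$ equal to the order of the chosen element) yields exactly the finite set $V^G_{g,\vc}$ modulo the diagonal conjugation $G$-action coming from changing the trivialization. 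Thus, over the open locus of smooth pointed curves, $\Mbar_{g,\vc}(\cB G)\to \Mbar_{g,n}$ is an honest (unramified) covering-type map of stacks, and the discussion preceding the statement already records that its degree is $|V^G_{g,\vc}|/|G|$.

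First I would set up the projection formula: since $\bar\psi_j=\ep^*\psi_j$ by definition of the descendant classes on $\Mbar_{g,\vc}(\cB G)$, one has
\begin{equation*}
\int_{\Mbar_{g,\vc}(\cB G)}\bar\psi_1^{a_1}\cdots\bar\psi_n^{a_n}
=\int_{\Mbar_{g,\vc}(\cB G)}\ep^*\bigl(\psi_1^{a_1}\cdots\psi_n^{a_n}\bigr)
=\int_{\Mbar_{g,n}}\ep_*(1)\cdot \psi_1^{a_1}\cdots\psi_n^{a_n},
\end{equation*}
using that $\ep$ is representable and proper (so proper pushforward and the projection formula apply) and that $\Mbar_{g,\vc}(\cB G)$ carries a fundamental class in the expected dimension $3g-3+n$ — here one must note that $\cB G$ is zero-dimensional and the virtual dimension formula \eqref{eqn:vir-dim} gives exactly $3g-3+n$, with the obstruction theory being unobstructed so the virtual class is the ordinary fundamental class. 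The problem is then reduced to computing the degree-zero pushforward $\ep_*(1)\in H^0(\Mbar_{g,n};\bQ)=\bQ$, i.e. the degree of $\ep$ over the generic point.

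Next I would compute that degree. Over the dense open substack $\Mbar_{g,n}^{\mathrm{sm}}\subset\Mbar_{g,n}$ of smooth pointed curves, the fiber of $\ep$ over $[(C,x_1,\dots,x_n)]$ is the groupoid of $G$-bundles on a twisted curve with coarse space $C$ and prescribed gerbe structure/monodromy at the $x_j$; this groupoid has objects indexed by $V^G_{g,\vc}$ and automorphisms given by the stabilizer under diagonal conjugation, so its groupoid cardinality is $|V^G_{g,\vc}|/|G|$. (The boundary strata have strictly smaller dimension and contribute nothing to $\ep_*(1)$, so it suffices to work over $\Mbar_{g,n}^{\mathrm{sm}}$.) Therefore $\ep_*(1)=|V^G_{g,\vc}|/|G|$, and substituting into the displayed identity yields the claim. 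The hypothesis $2g-2+n>0$ guarantees $\Mbar_{g,n}$ is nonempty of the stated dimension, and the nonemptiness of $V^G_{g,\vc}$ guarantees $\ep$ is not the map from the empty stack, so the degree is well-defined and positive.

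The main obstacle I anticipate is the careful bookkeeping of stack-theoretic degrees and automorphisms: one must be precise about how the twisting orders $r_j$ at the marked points, the choice of a section (trivialization) of each gerbe $\fx_j$, and the diagonal $G$-action interact, in order to land on exactly $|V^G_{g,\vc}|/|G|$ rather than some variant differing by factors of $|G|$ or $\prod r_j$. This is where the distinction between $\Mbar_{g,\vc}(\cB G)$ (sections present, evaluations in $\cI\cB G$) and Abramovich-Graber-Vistoli's $\cK_{g,n}(\cB G)$ (no sections, evaluations in $\overline{\cI}\cB G$) matters, and I would handle it by reducing to the explicit genus-zero three-pointed and one-pointed base cases and the gluing/forgetful axioms, or simply by citing Jarvis-Kimura's computation of this degree, which is the content of their Proposition 3.4.
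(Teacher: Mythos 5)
The paper does not give a proof of this statement: it is quoted from Jarvis--Kimura \cite{JK}, and the two inputs your argument rests on are exactly what the paper records just before the theorem, namely $\bar{\psi}_j=\ep^*\psi_j$ by definition and the assertion that $\ep:\Mbar_{g,\vc}(\cB G)\to\Mbar_{g,n}$ has degree $|V^G_{g,\vc}|/|G|$. Your reconstruction --- projection formula plus a groupoid count of the generic fiber as flat $G$-bundles with prescribed monodromy --- is the standard argument behind the cited result and is correct in outline, so it follows essentially the same route rather than a genuinely different one.

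Two caveats. First, $\ep$ is not representable: an object of $\Mbar_{g,\vc}(\cB G)$ has automorphisms of the $G$-bundle (the centralizer of the image of the monodromy representation), so the fibers are genuinely stacky. This does no harm --- properness of both stacks and $\bQ$-coefficients are all that pushforward and the projection formula need, and those stacky automorphisms are precisely what produce the $1/|G|$ in the groupoid cardinality $|V^G_{g,\vc}|/|G|$ --- but the word ``representable'' should be dropped. Also, the cleanest way to dismiss the boundary is smoothness: $\Mbar_{g,\vc}(\cB G)$ is \'etale over Olsson's smooth stack of twisted curves, hence smooth of pure dimension $3g-3+n$ with every component dominating $\Mbar_{g,n}$, so $\ep_*(1)$ is indeed computed on the locus of smooth domains. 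Second, your fallback of ``simply citing Jarvis--Kimura's computation of this degree, which is the content of their Proposition 3.4'' is circular, since that proposition is the very statement being proved; the direct count (objects $V^G_{g,\vc}$, morphisms given by simultaneous conjugation) must carry the argument, and the gerbe-section bookkeeping you flag is exactly where the content lies: for the version with sections used in this paper (evaluations in $\cI\cB G$) the generic fiber has cardinality $|V^G_{g,\vc}|/|G|$, whereas the rigidified spaces $\cK_{g,n}(\cB G)$ would introduce factors of the orders $r_j$ at the markings.
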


\subsection{Orbifold GW invariants} \label{sec:GWinvariants}
There is a morphism $\epsilon:\MgXi \to \MgX$. Define $\bar{\psi}_i = \epsilon^* \psi_i$.

Suppose that the coarse moduli space $X$ is {\em projective}. Then
$\MgXi$ is proper. Let
$$
\gamma_j\in H^{d_j}(\cX_{i_j};\bQ) \subset H^{d_j+2\age(\cX_{i_j})}_\CR(\cX;\bQ),
$$
Define orbifold Gromov-Witten invariants
\begin{equation}\label{eqn:orbifoldGWprimary}
\langle \bar{\ep}_{a_1}\gamma_1,\ldots, \bar{\ep}_{a_n}\gamma_n\rangle_{g,\beta}^\cX
:=\int_{[\MgXi]^w} \prod_{j=1}^n \big( \ev_j^* \gamma_j  \cup \bar{\psi}_j^{a_j}\big) 
\end{equation}
which is zero unless
$$
\sum_{j=1}^n (d_j+ 2\age(\cX_{i_j})+2a_j) =2\left( \int_\beta c_1(T_\cX)+(1-g)(\dim \cX-3)+n\right).
$$

\subsection{Equivariant orbifold GW invariants}\label{sec:equivariant-GW}
Suppose that $\cX$ is equipped with a $T$-action, which induces a $T$-action on 
$\MgXi$ and on the perfect obstruction theory. Then there is a $T$-equivariant virtual fundamental class
$$
 [\MgXi]^{\vir,T} \in H_{2d^\vir_{g,\vi,\beta} }^T(\MgXi;\bQ).
$$

The weighted $T$-equivariant virtual fundamental class is defined by 
$$
[\MgXi]^{w,T} =\Bigl(\prod_{j=1}^n r_{i_j}\Bigr)[\MgXi]^{\vir,T}.
$$

Suppose  that $\MgXi$ is {\em proper}. (If the coarse moduli space $X$ is projective then $\MgXi$ is proper for
any $g,\vi,\beta$.) Given $\gamma_j^T\in H^{d_j}_T(\cX_{i_j};\bQ)\subset H^{d_j+2 \age(\cX_{i_j})}_{\CR,T}(\cX;\bQ)$ and $a_j\in \bZ_{\geq 0}$,
we define $T$-equivariant orbifold Gromov-Witten invariants
\begin{equation}\label{eqn:equivariant-orbGW}
\begin{aligned}
\langle\bar{\ep}_{a_1}(\gamma_1^T),\cdots,\bar{\ep}_{a_n}(\gamma_n^T)\rangle_{g,\beta}^{\cX_T}
:= & \int_{[\MgXi]^{w,T}} \prod_{j=1}^n \bigl(\ev_j^*\gamma_j^T \cup (\bar{\psi}_1^T)^{a_j}\bigr)\\
&\in \bQ[u_1,\ldots,u_m](\sum_{j=1}^n (d_j+ 2a_j) -2d^\vir_\vi).
\end{aligned}
\end{equation}
where $\bQ[u_1,\ldots,u_m](2k)$ is the space of degree $k$ homogeneous polynomials in 
$u_1,\ldots,u_l$ with rational coefficients, and 
$\bQ[u_1,\ldots,u_m](2k+1)=0$. In particular, 
$$
\langle\bar{\ep}_{a_1}(\gamma_1^T),\cdots,\bar{\ep}_{a_n}(\gamma_n^T)\rangle_{g,\beta}^{\cX_T}
=\begin{cases}
0, & \sum_{j=1}^n (d_j+2a_j) < 2d^\vir_{g,\vi,\beta},\\
\langle\bar{\ep}_{a_1}(\gamma_1),\cdots,\bar{\ep}_{a_n}(\gamma_n)\rangle_{g,\beta}^\cX\in \bQ, &
\sum_{j=1}^n (d_j+2a_j) = 2d^\vir_{g,\vi,\beta}.
\end{cases}
$$
where $\gamma_j \in H^{d_j}(\cX_{i_j};\bQ)$ is the image of $\gamma_j^T $ under the map
$H_T^{d_j}(\cX_{i_j};\bQ)\to H^{d_j}(\cX_{i_j};\bQ)$.

\subsection{Virtual localization}
Let $\cF=\MgXi^T\subset \MgXi$ be the substack of $T$ fixed points. 
The restriction of the exact sequence \eqref{eqn:EF} to $\cF$ splits into two exact sequences of
$\cO_{\cF}$-modules:
\begin{equation}\label{eqn:EFf}
0\to T^{1,f} \to E^f \to F^f \to T^{2,f}\to 0,
\end{equation}
\begin{equation}\label{eqn:EFm}
0\to T^{1,m} \to E^m \to F^m \to T^{2,m} \to 0,
\end{equation}
where \eqref{eqn:EFf} and \eqref{eqn:EFm} are the fixed and moving parts of \eqref{eqn:EF}, respectively. 
The 2-term complex  $[(F^f)^\vee \to (E^f)^\vee]$ defines a perfect obstruction theory on $\cF$; in other words, $\cF$ is 
equipped with a virtual tangent bundle
$$
T^\vir_\cF  = T^{1,f}- T^{2,f} = E^f- F^f.
$$
which might have different ranks on different connected components of $\cF$. This defines a virtual fundamental class
\cite{BeFa, LiTi1}
$$
[\cF]^\vir  \in A_*(\cF) 
$$
The virtual normal bundle of $\cF$ in $\MgXi$ is
$$
N^\vir = T^{1,m}-T^{2,m} = E^m - F^m.
$$ 
which might also have different ranks on different connected components of $\cF$, but
$$
\rank (T^\vir_\cF) + \rank (N^\vir)  = d_{g,\vi,\beta}
$$
is constant on $\cF$. 

By virtual localization \cite{Be2, GrPa},
\begin{equation}
\int_{[\MgXi]^{w,T}} \prod_{j=1}^n\bigl(\ev_j^*\gamma_j^T\cup (\bar{\psi}_j^T)^{a_j}\bigr)
= \int_{[\cF]^w}\frac{ i_T^*\left(\prod_{j=1}^n\big(\ev_j^*\gamma_j^T\cup (\bar{\psi}_1^T)^{a_j}\big)\right)}
{e^T(N^\vir)}
\end{equation}
where 
$$
[\cF]^w = \Big(\prod_{j=1}^n r_{i_j}\Big)[\cF]^\vir.
$$ 

Suppose that $\MgXi$ is not proper, but $\cF= \MgXi^T$ is proper.
(If $\cX$ is a smooth GKM stack then $\MgXi^T$ is proper for any $g,\vi, \beta$.)
We {\em define} 
\begin{equation}\label{eqn:residue-orb}
\langle\bar{\ep}_{a_1}(\gamma_1^T),\ldots,\bar{\ep}_{a_n}(\gamma_n^T)\rangle^{\cX_T}_{g,\beta}
= \int_{[\cF]^w}
\frac{i_T^* \left(\prod_{j=1}^n\big(\ev_j^*\gamma_j^T\cup (\bar{\psi}_j^T)^{a_j}\big)\right)}{e^T(N^\vir)}.
\end{equation}
When $\MgXi$ is not proper, the right hand side of \eqref{eqn:residue-orb} is a rational function
(instead of a polynomial) in $u_1,\ldots,u_m$. It can be nonzero when
$\sum_{j=1}^n (d_j+2a_j) < 2d^\vir_\vi$, and does not have a nonequivariant limit ($u_i\to 0$) 
in general.

\subsection{Formal equivariant orbifold GW invariants}
Let $\hXY$ be the formal smooth GKM stack defined by an abstract stacky GKM graph $\vUp$.  
Then there is a $T$-equivariant virtual fundamental class
$$
 [\MghXi]^{\vir,T} \in H_{2d^\vir_{g,\vi,\beta} }^T(\MghXi;\bQ).
$$
The weighted $T$-equivariant virtual fundamental class is defined by 
$$
[\MghXi]^{w,T} =\Bigl(\prod_{j=1}^n r_{i_j}\Bigr)[\MghXi]^{\vir,T}.
$$
Define
$$
[\MghX]^{w,T} =\sum_{\vi \in (\IY)^n} [\MghXi]^{w,T}.
$$

Let $\MghXi^T\subset \MghXi$ be the substack of $T$ fixed points. Then $\MghXi^T$ is a proper DM stack
equipped with a perfect obstruction theory which is the $T$ fixed the part of the restriction of the 
perfect obstruction theory on $\MghXi$, so we have
$$
[\MghXi^T]^\vir \in H_*(\MghXi^T)
$$
and
$$
[\MghXi^T]^w := \Bigl(\prod_{j=1}^n r_{i_j}\Bigr)[\MghXi^T]^\vir \in H_*(\MghXi^T). 
$$
Define 
$$
[\MghX^T]^w =\sum_{\vi\in (\IY)^n} [\MghXi^T]^w.
$$
Given  $\hga_1^T,\ldots, \hga_n^T \in \cH_{\vUp}$, we define 
\begin{equation} \label{eqn:formal-GW}
\langle\bar{\ep}_{a_1}(\hga_1^T),\ldots,\bar{\ep}_{a_n}(\hga_n^T)\rangle^{\vUp}_{g,\hbeta}
= \int_{[\MghX^T]^w}
\frac{i_T^*\left(\prod_{j=1}^n\big(\ev_j^*\hga_j^T\cup (\bar{\psi}_j^T)^{a_j}\big) \right)}{e^T(N^\vir)}.
\end{equation}

In the remainder of this subsection, we relate the above formal equivariant orbifold GW invariants to the equivariant
orbifold GW invariants defined in the previous subsection (Section \ref{sec:equivariant-GW}). 
Let $\cX$ be a smooth GKM stack and let $\vUp$ be its stacky GKM graph. We define
a surjective map $j_*:\Eff(\hXY) \longrightarrow\Eff(\cX)$ and an injective map 
$j^*: H_{\CR,T}(\cX;\mathcal{Q}_T) \to \cH_{\Up}$ as follows.
\begin{enumerate}
\item Let $I$, $I_\vUp$, $j: I_{\vUp}\to I$ be defined as in Section \ref{sec:MghX}. The surjective group homomorphism
$$
j_*: H_2(\hXY;\bZ)=\bigoplus_{\ep \in E_c(\Ga)} \bZ[\ell_\ep] \longrightarrow H_2(\cX;\bZ)
$$
restricts to a {\em surjective} map  
$$
j_*:\Eff(\hXY) \longrightarrow \Eff(\cX)
$$
where $\Eff(\cX)$ is the set of effective classes in $\cX$. Note that given $\beta\in \Eff(\cX)$, $\{ \hbeta\in \Eff(\hXY): j_*\hbeta =\beta\}$
is a finite set. 
\item There is a $\mathcal{Q}_T$-linear map
$$
j^*= \bigoplus_{\si\in V(\Up)} j_\si^*: H^*_{\CR,T}(\cX;\mathcal{Q}_T) \to \cH_{\vUp} 
= \bigoplus_{\si \in V(\Up)} H^*_{\CR,T}(\cX_\si;\mathcal{Q}_T)
$$
where $j_\si^*$ is induced by the inclusion $j_\si:\cX_\si \hookrightarrow \cX$.
\end{enumerate}
The following identity follows from the localization computations in Section \ref{sec:localization}.
\begin{proposition} Given nonnegative integers $g, a_1,\ldots, a_n$ an effective class $\beta \in \Eff(\cX)$, and
$\ga_1^T,\ldots, \ga_n^T\in H^*_{\CR,T}(\cX;\mathcal{Q}_T)$, we have 
$$
\langle\bar{\ep}_{a_1}(\gamma_1^T),\ldots,\bar{\ep}_{a_n}(\gamma_n^T)\rangle^\cX_{g,\beta}
=\sum_{\substack{ \hbeta  \in \Eff(\hXY) \\ j_*\hbeta =\beta} } 
\langle\bar{\ep}_{a_1}(j^*\gamma_1^T),\ldots,\bar{\ep}_{a_n}(j^*\gamma_n^T)\rangle^{\vUp}_{g,\hbeta}.
$$
\end{proposition}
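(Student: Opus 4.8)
The plan is to reduce both sides to residue integrals over $T$-fixed loci and then match them graph by graph, invoking the explicit localization formula of Section~\ref{sec:localization} (Theorem~\ref{main-orb}). First, by multilinearity one may assume each $\gamma_j^T$ is supported on a single component $\cX_{i_j}$ of $\cI\cX$, fixing $\vi=(i_1,\dots,i_n)$; then, by \eqref{eqn:residue-orb} and \eqref{eqn:formal-GW}, the left-hand side equals
\[
\int_{[\cF]^w}\frac{i_T^*\bigl(\prod_{j=1}^n \ev_j^*\gamma_j^T\cup(\bar{\psi}_j^T)^{a_j}\bigr)}{e^T(N^\vir)},\qquad \cF=\MgXi^T,
\]
and each summand on the right is the same expression with $\cX$ replaced by $\hXY$, $\beta$ by $\hbeta$, and $\gamma_j^T$ by $j^*\gamma_j^T$.

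The first step I would carry out is the structure of $T$-fixed maps: a $T$-fixed twisted stable map $f\colon\cC\to\cX$ has image in the $1$-skeleton $\cX^1$, every component of $\cC$ being either contracted to a zero-dimensional orbit $\fp_\si=\cB G_\si$ or mapped onto a one-dimensional orbit closure $\fl_\ep$ (necessarily with $\ep\in E(\Up)_c$) by a totally ramified cover, with all nodes and markings over torus-fixed points. It follows that $f$ factors through $j\colon\hXY\to\cX$ and carries a well-defined edge-degree vector $\hbeta=\sum_\ep d_\ep[\ell_\ep]\in\Eff(\hXY)$ with $j_*\hbeta=\beta$, giving a decomposition into open and closed substacks
\[
\MgXi^T=\bigsqcup_{\hbeta\in\Eff(\hXY),\ j_*\hbeta=\beta}\MghXi^T,
\]
the index set being finite. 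Since $j_*$ need not be injective, distinct fixed components may record different $\hbeta$ with the same image $\beta$ --- this is exactly what produces the sum over $\hbeta$ on the right-hand side.

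Next I would verify that under this decomposition every ingredient of the residue integral matches term by term. (i) The restricted obstruction theory: restricting the two-term complex to $\cF$ and taking fixed parts as in \eqref{eqn:EFf}--\eqref{eqn:EFm}, the class $[\cF]^\vir$ and the weights $\prod_j r_{i_j}$ are built from the normalization sequence of $\cC$, the cohomology of $f^*T_\cX$ on the contracted and multiple-cover components, and the node deformations; as $f$ maps into $\cX^1$ we have $f^*T_\cX\cong f^*T_{\hXY}$, and all of this depends only on the $T$-equivariant formal neighborhood of $\cX^1$, i.e.\ on $\vUp$, so $[\cF]^w$ restricts to $[\MghXi^T]^w$ componentwise. (ii) The virtual normal bundle $N^\vir$ agrees for the same reason --- equivalently, the vertex factors (Hurwitz--Hodge integrals over $\Mbar_{g,\vc}(\cB G_\si)$; cf.\ Section~\ref{sec:hurwitz-hodge}), the edge factors (multiple covers of the spherical DM curves $\fl_\ep$), and the node-smoothing factors that Theorem~\ref{main-orb} assembles into $1/e^T(N^\vir)$ are all read off from $\vUp$ alone. (iii) The evaluation maps $\ev_j\colon\MgXi^T\to\cI\cX$ factor through $\cI\hXY\to\cI\cX$, and on each fixed component, composed with restriction to the vertex chart it touches, this induces precisely the map $j^*=\bigoplus_\si j_\si^*$ of the statement, so $i_T^*\ev_j^*\gamma_j^T=\ev_j^*(j^*\gamma_j^T)$; the classes $\bar{\psi}_j^T$, pulled back from $\Mbar_{g,n}$, are unchanged.

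Granting (i)--(iii), the conclusion is formal: apply Theorem~\ref{main-orb} to the left-hand side to write it as a sum over decorated localization graphs mapping to $\vUp$ with total degree $\beta$, group the graphs by their edge-degree vector $\hbeta$, and recognize each group as the graph sum that Theorem~\ref{main-orb} gives for $\langle\bar{\ep}_{a_1}(j^*\gamma_1^T),\ldots,\bar{\ep}_{a_n}(j^*\gamma_n^T)\rangle^{\vUp}_{g,\hbeta}$; summing over the finitely many $\hbeta$ with $j_*\hbeta=\beta$ yields the identity. The hard part will be (i)--(ii): proving that the restriction of the obstruction theory of $\MgXi$ to its $T$-fixed locus literally coincides with that of $\MghXi$ on the matching piece --- and similarly for $N^\vir$ --- in the presence of the non-abelian gerbe structure at the vertices and the Behrend--Noohi presentation $\fl_\ep=[(\bC^2-\{0\})/E_\ep]$ along the edges, so that the Hurwitz--Hodge, multiple-cover and node-smoothing contributions reconstruct $e^T(N^\vir)$ identically on the two sides. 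This is exactly the fixed-locus analysis carried out in Section~\ref{sec:localization}, from which the proposition then follows.
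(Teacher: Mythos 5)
Your proposal is correct and follows essentially the same route as the paper: the paper proves this proposition simply by appealing to the localization computations of Section \ref{sec:localization}, where it is shown that $T$-fixed twisted stable maps to $\cX$ land in the 1-skeleton and hence define fixed points of $\Mbar_{g,n}(\hXY,\hbeta)$ with $j_*\hbeta=\beta$ (so $\Mbar(\hXY)^T\to\Mbar(\cX)^T$ is an isomorphism), and where the fixed-part obstruction theory, virtual normal bundle, and restricted integrand are computed purely from the data of $\vUp$, which is exactly the content of your steps (i)--(iii). Grouping the localization graphs by their edge-degree vector $\hbeta$ and applying Theorem \ref{main-orb} on both sides, as you do, is precisely how the identity is obtained.
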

Therefore, equivariant orbifold GW invariants of $\cX$ can be expressed in terms
of the more refined formal equivariant orbifold GW invariants of its stacky GKM graph $\vUp$.

\section{Torus fixed points and virtual localization}\label{sec:localization} 

\subsection{The fundamental group of a one-dimensional orbit}

The union of 1-dimensional $T$  orbits in $\cX$ is
$$
\cX^1\setminus \cX^T =\bigcup_{\ep\in E(\Up)} \fo_\ep
$$
where each 1-dimensional $T$ orbit $\fo_\ep$ is a $G_\ep$-gerbe over its coarse moduli $o_\ep \cong \bC^*$. 
Let $\fp_\ep \cong \cB G_\ep$ be a point in $\fo_\ep$ chosen as in Section \ref{GKM-graph}, and let
$$
H_\ep:=\pi_1(\fo_\ep,\fp_\ep)
$$
be the fundamental group of $\fo_\ep$. The projection $\fo_\ep\to o_\ep$ induces 
a surjective group homomorphism 
$$
H_\ep= \pi_1(\fo_\ep,\fp_\ep)\lra \pi_1(o_\ep, p_\ep) \cong \bZ
$$
whose kernel of is $G_\ep$. In other words, we have a short exact sequence of groups
\begin{equation}\label{eqn:foep}
1\to G_\ep \stackrel{j_\ep}{\lra}  H_\ep \stackrel{\phi_\ep}{\lra}  \bZ \to 1.
\end{equation}

Let $\ep\in E_c(\Up)$ be a compact edge, so that  $\ell_\ep\cong \bP^1$. Let $\si_x, \si_y \in V(\Up)$ be the two ends of the edge $\ep$, and let
$x=\fp_{\si_x}$ and $y=\fp_{\si_y}$ be the two torus fixed point corresponding to $\si_x$ and $\si_y$, respectively. 
Then $x =\cB G_{\si_x}$, $y=\cB G_{\si_y}$, and
$$
\cU_x:= \fl_\ep \setminus\{y\}\cong [\bC/G_{\si_x}],\quad \cU_y :=\fl_\ep \setminus \{x\} \cong [\bC/G_{\si_y}],\quad
\cU_x\cap \cU_y =\fo_\ep.
$$
The open embeddings $\fo_\ep\hra \cU_x$ and $\fo_\ep \hra \cU_y$ induce surjective group homomorphisms
\begin{equation}\label{eqn:foxy}
H_\ep=\pi_1(\fo_\ep) \stackrel{\pi_{(\ep,\si_x)} }{\lra} \pi_1(\cU_x)\cong G_{\si_x},\quad
H_\ep=\pi_1(\fo_\ep) \stackrel{ \pi_{(\ep,\si_y)} }{\lra} \pi_1(\cU_y)\cong G_{\si_y}.
\end{equation}
Recall that we also have the following two short exact sequences of groups:
\begin{equation}\label{eqn:Gsix}
1\to G_\ep \stackrel{j_{(\ep,\si_x)} }{\longrightarrow}  G_{\si_x}\stackrel{\phi_{(\ep,\si_x)}}{\longrightarrow} \mu_{r_{(\ep,\si_x)}}\to 1,
\end{equation}
\begin{equation}\label{eqn:Gsiy}
1\to G_\ep \stackrel{j_{(\ep,\si_y)} }{\longrightarrow}  G_{\si_y}\stackrel{\phi_{(\ep,\si_y)}}{\longrightarrow}  \mu_{r_{(\ep,\si_y)}}\to 1.
\end{equation}
Equations \eqref{eqn:foep}-\eqref{eqn:Gsiy}  fit into the following commutative diagram:
$$
\xymatrix{
1 \ar[r] & G_\ep \ar[r]^{j_{(\ep,\si_x)} } & G_{\si_x} \ar[r]^{\phi_{(\ep,\si_x)}} & \mu_{r_{(\ep,\si_x)}} \ar[r]  & 1 \\
1 \ar[r] & G_\ep   \ar[r]^{j_\ep}  \ar[u]_{\id_{G_\ep} } \ar[d]^{\id_{G_\ep} } & H_\ep \ar[r]^{\phi_\ep} \ar[u]_{\pi_{(\ep,\si_x)}} \ar[d]^{\pi_{(\ep,\si_y)}}& \bZ\ar[r]  \ar[u]\ar[d] & 1 \\
1 \ar[r] & G_\ep \ar[r]^{j_{(\ep,\si_y)} }  & G_{\si_y}  \ar[r]^{\phi_{(\ep,\si_y)}}&  \mu_{r_{(\ep,\si_y)}} \ar[r] &1 
} 
$$
where $\bZ\longrightarrow \mu_{r_{(\ep,\si_x)}}$ and  $\bZ\longrightarrow \mu_{r_{(\ep,\si_y)}}$ are given by 
$d\mapsto e^{ 2\pi\sqrt{-1} d/ r_{(\ep,\si_x)} }$ and  
$d\mapsto e^{ 2\pi\sqrt{-1} d/ r_{(\ep,\si_y)} }$, respectively.

\subsection{Torus fixed points in the moduli spaces} \label{sec:fixed}
Let $f:(\cC,\fx_1,\ldots, \fx_n)\to \cX$ be a twisted stable morphism which represents a $T$-fixed point in  $\Mbar_{g,n}(\cX,\beta)$.
Then there exists a surjective group homomorphism
$p: \tT \cong (\bC^*)^m  \longrightarrow T\cong (\bC^*)^m$ with finite kernel and a group homomorphism $\phi:\tT\longrightarrow \Aut(C,x_1,\ldots, x_n)$ such that
$p(t)\cdot f(z) = f(\phi(t)\cdot z)$ for all $z\in C$. The image of $f$ lies in the 1-skeleton $\cX^1$, the union of zero-dimensional and one-dimensional 
$T$ orbits in $\cX$.  In particular $f$ defines a twisted stable morphism with target $\hXY$ which represents a $T$-fixed point in 
$\Mbar_{g,n}(\hXY,\hbeta)$ where $\hbeta \in \Eff(\hXY)$ satisfies $j_* \hbeta=\beta$.

If $\cC_v$ is a connected component of $f^{-1}(\cX^T)$ then the image of $\cC_v$ is a $T$ fixed point $\fp_\si \cong \cB G_\si$ for some $\si\in V(\Up)$. 
If $O_e$ is a connected component of $f^{-1}(\cX^1\setminus \cX^T)$ then
$O_e\cong \bC^*$, and the image of $O_e$ is a 1-dimensional $T$ orbit $\fo_\ep$ for some $\ep \in E_c(\Up)$. The maps
$$
O_e \stackrel{f|_{O_e}} {\longrightarrow} \fo_\ep \to o_\ep
$$
induce
$$
\pi_1(O_e)= \bZ \stackrel{(f|_{O_e})_*}{\longrightarrow} \pi_1(\fo_\ep)=H_\ep \stackrel{\phi_\ep}{\longrightarrow} \pi_1(o_\ep)= \bZ.
$$
Let $\gamma_e\in H_\ep$ be the image of the generator of $\pi_1(O_e)= \bZ$ under $(f|_{O_e})_*$, and let $d_e = \phi_\ep(\gamma_e)\in \bZ$. Then $d_e>0$ is the degree of the map $O_e =\bC^*\longrightarrow o_\ep =\bC^*$.  

The map $f|_{O_e}:O_e\to \fo_{\ep}$ is of degree $d_e |G_\ep|$. We have
$$
\Aut(f|_{O_e}) \cong C_{H_\ep}(\gamma_e)/\langle \gamma_e\rangle. 
$$
In particular, if $G_\ep$ is trivial then $H_\ep= \bZ$ and $\Aut(f|_{O_e}) = \bZ/d_e\bZ$;
if $H_\ep$ is abelian then $\Aut(f|_{O_e}) = H_\ep/\langle \gamma_e\rangle$ and $|\Aut(f|_{O_e})| = d_e |G_\ep|$.

Let $\cC_e$ be the closure of $O_e$ in $\cC$. Then
$\cC_e$ is a football $\cF(r_u, r_v)$ and $f_e:=f|_{\cC_e}: \cC_e \to \fl_\ep$ is determined by 
$\gamma_e\in H_\ep$. Suppose
that $\si_x,\si_y\in V(\Up)$ are the two ends of the edge $\ep$. We define
$$
k_{(\ep,\si_x)} := \pi_{(\ep,\si_x)}(\gamma_\ep) \in G_{\si_x},\quad
k_{(\ep,\si_y)}:= \pi_{(\ep,\si_y)}(\gamma_\ep)\in G_{\si_y}.
$$
The map $f_e:\cC_e =\cF(r_u,r_v)\to \fl_\ep$ is representable, so $r_u$ and $r_v$ are the orders of
$k_{(\ep,\si_x)} \in G_{\si_x}$ and $k_{(\ep,\si_y)}\in G_{\si_y}$, respectively. In particular, the domain $\cC_e$ of $f_e$ is also determined by $\gamma_e$.
Let $\bar{f}_e: C_e =\bP^1 \to \ell_\ep=\bP^1$ be the map between coarse moduli spaces. Then 
$f_e([x,y])= [x^{d_e}, y^{d_e}]$ in terms of homogeneous coordinates on $\bP^1$.

\subsection{Torus fixed points and decorated graphs}\label{sec:graph-notation-orb}
Given a smooth GKM stack $\cX$, let
$$
\Mbar(\cX):= \bigsqcup_{\substack{g,n\in  \bZ_{\geq 0}\\ \beta\in \Eff(\cX)}}\Mbar_{g,n}(\cX,\beta)
$$
and let $\Mbar(\cX)^T$ be the $T$ fixed substack.

Given an abstract stacky GKM graph $\vGa$, let
$$
\Mbar(\hXY):=\bigsqcup_{\substack{g,n\in  \bZ_{\geq 0}\\ \hbeta\in \Eff(\hXY)}}\Mbar_{g,n}(\hXY,\hbeta)
$$
and let $\Mbar(\hXY)^T$ denote the $T$ fixed substack. By the discussion in Section \ref{sec:fixed},  if $\vGa$ is the stacky GKM graph 
of a smooth GKM stack $\cX$ then the morphism $\Mbar(\hXY) \to \Mbar(\cX)$ restricts to an isomorphism
$\Mbar(\hXY)^T \to \Mbar(\cX)^T$. In this subsection, we will describe $\Mbar(\hXY)^T$ for a general abstract stacky
GKM graph $\vGa$; in particular, this gives a description of $\Mbar(\cX)^T$ for any smooth GKM stack $\cX$. 

We fix an abstract stacky GKM graph $\vGa$, which defines a formal GKM stack $\hXY$. Let 
$\hXY^1=\bigcup_{\ep\in E(\Up)} \fl_\ep$ be the 1-skeleton of $\hXY$. Given a twisted stable map
$f:(\cC,\fx_1,\ldots,\fx_n)\to \hXY$ which represents a point in 
$\Mbar(\hXY)^T$, we define a decorated graph $\vGa=(\Ga, \vf, \vec{\gamma}, \vg, \vs, \vc)$ as follows.
\begin{enumerate}
\item (graph) $\Ga$ is a compact, connected 1 dimensional CW complex. We denote the set of vertices (resp. edges) in $\Ga$ 
by $V(\Ga)$ (resp. $E(\Ga)$).  For each connected component $\cC_v$ of $f^{-1}(\hXY^T) = f^{-1}(\{ \fp_\si:\si\in V(\Up) \})$, 
we associate a vertex $v\in V(\Ga)$. For each connected component $O_e \cong \bC^*$ of 
$f^{-1}(\hXY^1)\setminus f^{-1}(\hXY^T)$, we associate an edge $e\in E(\Gamma)$; the closure 
$\cC_e$ of $O_e$ in $\cC$ is a football. 
The set of flags of $\Gamma$ is defined to be
\begin{eqnarray*}
F(\Ga) &=&\{(e,v)\in E(\Ga)\times V(\Ga)\mid v\in e\} \\
&=& \{(e,v)\in E(\Gamma)\times V(\Ga)\mid \cC_v\cap \cC_e \textup{ is nonempty}\}.
\end{eqnarray*}

\item (label) For each vertex $v\in V(\Ga)$ let $C_v$ denote the coarse moduli of $\cC_v$.
Then $C_v$ is a curve (with at most nodal singularities) or a point, 
and $f(\cC_v)=\fp_{\si_v}$ for some $\si_v\in V(\Up)$. For each edge $e\in E(\Ga)$, $f(\cC_e)=\fl_{\ep_e}$ for some $\ep_e\in E(\Up)$. 
The {\em label map} $\vf: V(\Ga)\cup E(\Ga)\to V(\Up)\cup E(\Up)_c$
sends  a vertex $v\in V(\Ga)$ to  the vertex $\si_v \in V(\Up)$ and
an edge $e\in E(\Ga)$ to the edge edge $\ep_e \in E(\Up)_c$.
Moreover, $\vf$ defines a map from the graph $\Ga$
to the graph $\Up$: if $(e,v)\in F(\Ga)$ 
then $(\ep_e,\si_v)\in F(\Up)$. 

\item (degree) 
The {\em degree map} $\vec{\gamma}$ sends  an edge $e\in E(\Ga)$ to 
the conjugacy class $[\gamma_e] \in \Conj(H_{\ep_e})$, where $\gamma_e\in H_{\ep_e}$ 
is defined as in Section \ref{sec:fixed}. We call $[\gamma_e]$ 
the degree of the map $f_e=f|_{\cC_e}: \cC_e\to \fl_{\ep_e}$. The positive integer 
$d_e := \phi_\ep(\gamma_e)$  is the degree of the map $\bar{f}_e: C_e=\bP^1 \to \ell_{\ep_e}=\bP^1$ between
coarse moduli spaces. (Note that $\phi_\ep(\gamma_e)$ depends only on the conjugacy class
$[\gamma_e]$ of $\gamma_e$.)

\item (genus) The {\em genus map} $\vg:V(\Ga)\to \bZ_{\geq 0}$
sends a vertex $v\in V(\Ga)$ to a nonnegative integer $g_v$, where $g_v=0$ if $C_v$ is a point, and 
$g_v= h^1(C_v,\cO_{C_v})$ if $C_v$ is a curve.

\item (marking) The {\em marking map} $\vs: \{1,2,\ldots,n\}\to V(\Ga)$
sends $j$ to $v$ if $x_j \in C_v$. 

\item (monodromy) For any $v\in V(\Ga)$ we define $G_v = G_{\si_v}$.
Suppose that $j\in \{1,\ldots,n\}$ and $v\in \vs(j)\in V(\Ga)$.
Let $r_j$ be the cardinality of the inertia group
$\Aut(\fx_j)$ of the $j$-th marked point $\fx_j$, and let
$\xi_j$ be the generator of $\Aut(\fx_j)\cong \mu_{r_j}$ which acts on 
the tangent line $T_{\fx_j}\cC$ by $e^{2\pi\sqrt{-1}/r_j}$. 
Let $k_j \in G_v$ be the image of $\xi_j \in \Aut(\fx_j)$ under the group homomorphism 
$\Aut(\fx_j) \to \Aut(\fp_{\si_v}) = G_v$. The representability of $f$ implies
$\Aut(\fx_j)\to \Aut(\fp_{\si_v})$ is injective, so $r_j$ is equal to the order of $k_j \in G_v$.
The {\em monodromy map} $\vc$ sends a marking $j\in \{1,\ldots,n\}$ to the conjugacy class $c_j:=[k_j] \in 
\Conj(G_v)$ where $v=\vs(j)$,
\end{enumerate} 

The map $e \in E(\Gamma) \mapsto  [\gamma_e] \in \Conj(H_{\ep_e})$ determines
a map 
$$
(e,v) \in F(\Gamma) \mapsto  c_{(e,v)} := [\pi_{(\ep_e,\si_v)} (\gamma_e)] \in \Conj(G_{\si_v}).
$$
Given a flag $(e,v)$, let $\fy_{(e,v)}$ be the intersection point of $\cC_v$ and $\cC_e$.
(If $C_v$ is a point then $\cC_v=\{\fy_{(e,v)}\}$; if $C_v$ is a curve then $\fy_{(e,v)}$ is a node.) Let
$r_{(e,v)}$ be the cardinality of the inertia group of $\fy_{(e,v)}$, and let $\xi_{(e,v)}$ be the generator
of $\Aut(\fy_{(e,v)})\cong \mu_{r_{(e,v)}}$ which acts on the tangent line $T_{\fy_{(e,v)}} \cC_e$ by
$e^{2\pi\sqrt{-1}/r_{(e,v)}}$. Then the image of $\xi_{(e,v)}$ under the
injective group homomorphism $\Aut(\fy_{(e,v)})\to \Aut(\fp_{\si_v})=  G_v$ is
an element $k_{(e,v)}$ in the conjugacy class $c_{(e,v)}$. The representability of $f$ implies
$r_{(e,v)}$ is equal to the order of $k_{(e,v)}$. 

Given $v\in V(\Ga)$, we define $E_v\subset E(\Ga)$ and $S_v\subset \{1,\ldots,n\} $ by
\begin{align}\label{EvSv}
&
E_v=\{e\in E(\Ga): (e,v)\in F(\Ga)\}\notag\\
&
S_v=\{j\in \{1, \cdots, n\}:x_j\in C_v\}.
\end{align}
Given a conjugacy class $c\in \Conj(G_v)$, let $\bar{c}$ denote the conjugacy class
$\bar{c}=\{ k^{-1}: k\in c\}$. Define $\vc_v: E_v\cup S_v\to \Conj(G_v)$ by 
$\vc_v(e) = \overline{c_{(e,v)}}$ if $e\in E_v$, and
$\vc_v(j) = c_j$ if $j\in S_v$. Then $V_{g_v,\vc_v}^{G_v}$ is non-empty. 
Here we view $\vc_v$ as an element in $\Conj(G_v)^{E_v\cup S_v} = \Conj(G_v)^{n_v}$, where $n_v:= |E_v|+|S_v|$.

The inertia stack of $\hXY^T$ is
$$
\cI(\hXY^T) =\bigsqcup_{\si\in V(\Up)}\cI \fp_\si \cong \bigsqcup_{(\si,c)\in I^T_{\vUp}} (\cB G_\si)_c
$$
where 
$$
I^T_{\vUp}=\{ (\si,c):\si\in V(\Up), c \in \Conj(G_\si)\}.
$$
Connected components of  $\cI(\hXY^T)$ are in one-to-one correspondence with pairs $(\si,c) \in I^T_{\vUp}$.
The inclusion $\cI(\hXY^T) \hookrightarrow  \cI(\hXY)$ 
induces a surjective map $j_0: I^T_{\vUp} \to I_{\vUp}$ such that the image of $(\cB G_\si)_c$
under $j_0$ is contained in $(\hXY)_{j_0(\si,c)}$.  Let $G(\vUp)$ be the set of decorated graphs associated to some 
$T$ fixed twisted stable maps to $\hXY$. Then $G(\vUp)$ is a countable infinite set.
We have a map $\Mbar(\hXY)^T \to G(\vUp)$; let $\cF_\vGa \subset \Mbar(\hXY)^T$ be the preimage
of $\vGa \in G(\vUp)$. Then
$$
\Mbar(\hXY)^T =\bigsqcup_{\vGa \in G(\vUp)} \cF_\vGa.
$$ 
where each $\cF_\vGa$ is a union of connected components. 

\begin{definition}
Let $\vGa =(\Gamma,\vf, \vec{\gamma}, \vg,\vs,\vc)  \in G(\vUp)$. We define the genus of $\vGa$ to be
\begin{equation}\label{eqn:gGa}
g(\vGa):=  b_1(\Ga) +  \sum_{v\in V(\Ga)} g_v = |E(\Ga)|-|V(\Ga)|+1 + \sum_{v\in V(\Ga)} g_v 
\end{equation}
and define the degree of $\vGa$ to be
\begin{equation}\label{eqn:dGa}
\hbeta(\vGa):= \sum_{e\in E(\Ga)} d_e [\ell_{\ep_e}] 
=\sum_{\ep \in E_c(\Up)} \bigg(\sum_{e\in \vf^{-1}(\ep)} d_e \bigg)  [\ell_\ep]
\in \Eff(\hXY). 
\end{equation}
If the domain of the marking map $\vs$ is $\{1,\ldots,n\}$, we define $n(\vGa)=n$, and define
$$
\vec{i}(\vGa):= (j_0(\si_1,c_1),\ldots, j_0(\si_n,c_n))\in (I_{\vUp})^{n(\vGa)},
$$
where $\si_j = \vf\circ \vs(j) \in V(\Up)$ and $c_j \in \Conj(G_{\si_j})$. 
\end{definition}

Given nonnegative integers $g, n$ and an effective class $\hbeta\in \Eff(\hXY)$, define
$$
G_{g,n}(\vUp,\hbeta):=\{ \vGa\in G(\vUp): g(\vGa)=g, n(\vGa)=n, \hbeta(\vGa)=\hbeta \}. 
$$
Then $G_{g,n}(\vUp,\hbeta)$ is a finite set, and
\begin{equation}\label{eqn:FvGa}
\Mbar_{g,n}(\hXY,\hbeta)^T =\bigsqcup_{\vGa\in G_{g,n}(\vUp,\hbeta)} \cF_{\vGa}
\end{equation}
Given $\vec{i}= (i_1,\ldots, i_n) \in (I_{\vUp})^n$,  define
$$
G_{g,\vi}(\vUp, \hbeta) := \{ \vGa\in G(\vUp): g(\vGa)=g, \vi(\vGa)=\vi, \hbeta(\vGa)=\hbeta\}. 
$$
which is a subset of $G_{g,n}(\vUp,\hbeta)$. Then
$$
\Mbar_{g,\vec{i}}(\hXY,\hbeta)^T =\bigsqcup_{\vGa \in G_{g,\vec{i}}(\vUp,\hbeta)}\cF_{\vGa}.
$$

In the remainder of this section, we give an explicit description of $\cF_{\vGa}$ for 
each decorated graph $\vGa\in G(\vUp)$. We first introduce some notation. Let
\begin{eqnarray*}
V^S(\vGa) &=& \{ v\in V(\Ga): 2g_v-2+n_v>0\} =\{v\in V(\Ga): \cC_v \textup{ is a curve} \},\\
V^{0,1}(\vGa) &=& \{ v\in V(\Ga): g_v=0, |S_v|=0, |E_v|=1 \},\\
V^{1,1}(\vGa) &=& \{ v\in V(\Ga): g_v=0, |S_v|= |E_v|=1 \},\\
V^{0,2}(\vGa) &=& \{ v\in V(\Ga): g_v=0, |S_v|=0, |E_v|= 2\}. 
\end{eqnarray*}
Then $V(\Ga)$ is a disjoint union of $V^S(\vGa), V^{0,1}(\vGa), V^{1,1}(\vGa)$, and $V^{0,2}(\vGa)$. 
We say a vertex $v$ is stable if $v\in V^S(\vGa)$; otherwise we call it unstable. Let
$$
F^S(\vGa)=\{ (e,v)\in F(\Ga): v\in V^S(\vGa)\} 
$$
be the set of stable flags. 

Given an edge $e\in E(\Ga)$,  let $v,v'\in V(\Ga)$ be its two ends. 
By the discussion in Section \ref{sec:fixed}, the
map $f_e:=f|_{\cC_e}: \cC_e\to \fl_\ep$,
where $\ep=\vf(e)$, is determined by $\gamma_e \in H_{\ep_e}$. 
The automorphism group of $f_e$ is a finite group 
$$
\Aut(f_e) = \Aut(f|_{O_e}) \cong c_{H_\ep}(\gamma_e)/\langle \gamma_e\rangle.
$$
The moduli space of $f_e$ is
$$
\cM_e = \cB (\Aut(f_e)). 
$$

Given a stable vertex $v\in V^S(\vGa)$,  
the map $f_v:=f|_{\cC_v}: \cC_v\to \fp_\si =\cB G_v$, where $\si=\vf(v)$, represents a point
in $\Mbar_{g_v, \vc_v}(\cB G_v)$, where $\vc_v \in \Conj(G_v)^{E_v\cup S_v}$.  To obtain a $T$ fixed point $[f:(\cC,\fx_1,\ldots, \fx_n)\to \hXY]$, we glue
the the maps 
$$
\{ f_v:\cC_v \to \fp_{\si_v}\mid v\in V^S(\vGa)\}, \quad \{f_e:\cC_e\to \fl_{\ep_e} \mid e\in E(\Ga)\}
$$ 
along the nodes
$$
\{ \fy_{(e,v)}=\cC_e\cap \cC_v : (e,v)\in F^S(\vGa) \} \cup \{ \fy_v = \cC_v \mid v\in V^{0,2}(\vGa)\}.
$$
We define $\widetilde{\cM}_{\vGa}$ by the following 2-cartesian diagram
$$
\begin{CD}
\widetilde{\cM}_\vGa @>{f_E}>>  &  \displaystyle{ \prod_{e\in E(\Ga)} \cM_e }\\
@V{f_V}VV &  @V{\ev_E}VV \\
\displaystyle{ \cM_{\vGa}:=\prod_{v\in V^S(\vGa)} \Mbar_{g_v, \vc_v}(\cB G_v)  } @>{\ev_V}>> & 
\displaystyle{ \prod_{(e,v)\in F^S(\vGa)} \overline{\cI} \cB G_v \times \prod_{v\in V^{0,2}(\vGa)} \overline{\cI} \cB G_v }
\end{CD}
$$
where $\ev_V$ and $\ev_E$ are given by evaluation at nodes, 
and $\overline{\cI}\cB G_v$ is the rigidified inertia stack.
More precisely:
\begin{itemize}
\item  For every stable flag $(e,v)\in F^S(\vGa)$,  
let $\ev_{(e,v)}$ be the evaluation map at the node $\fy_{(e,v)}$,
\item For every $v\in V(\Ga)$,   let $\iota$ be the involution on $\cI\cB G_v$ induced by the involution $G_v\to G_v$ given by $h\mapsto h^{-1}$.
\item  Define
\begin{eqnarray*}
\ev_V &=& \prod_{ (e,v)\in F^S(\vGa)} \ev_{(e,v)}\, \\
\ev_E &=& \prod_{(e,v)\in F^S(\vGa)} \big(\iota\circ \ev_{(e,v)}\big) \times 
\prod_{\tiny \begin{array}{c} v\in V^{0,2}(\vGa)\\ E_v=\{e_1,e_2\} \end{array}} \ev_{(e_1,v)}\times \big(\iota\circ \ev_{(e_2,v)}\big) 
\end{eqnarray*}
\item If $v\in V^{0,2}(\vGa)$ and $E_v=\{e_1,e_2\}$, we define $r_v=r_{(e_1,v)} = r_{(e_2,v)}$, and define
$c_v = c_{(e_1,v)} = \overline{c_{(e_2,v)}}$.
\end{itemize}
We have
$$
\cF_\vGa = \widetilde{\cM}_\vGa/\Aut(\vGa)
$$
which is a proper smooth DM stack of dimension
$$
d_\vGa = \sum_{v\in V^S(\vGa)}(3g_v-3+n_v).
$$
It has a fundamental class
$$
[\cF_\vGa] =  c_\vGa [\cM_\vGa]\in  = A_{d_\vGa}(\cF_{\vGa};\bQ)= A_{d_\vGa}(\cM_{\vGa};\bQ) 
$$
where 
\begin{equation}
[\cM_\vGa] =\prod_{v\in V^S(\vGa)} \big[\Mbar_{g_v,\vc_v}(\cB G_v)\big],
\end{equation}
and 
\begin{equation}\label{eqn:cGa}
\begin{aligned}
c_\vGa & =  \frac{1}{|\Aut(\vGa)|\displaystyle{\prod_{e\in E(\Ga)}|\Aut(f_e)|}  } \cdot
\prod_{(e,v)\in F^S(\vGa)} \frac{|G_v|}{r_{(e,v)}|c_{(e,v)}|}\cdot \prod_{v\in V^{0,2}(\vGa)} \frac{|G_v|}{r_v|c_v|} \\
&= \frac{1}{|\Aut(\vGa)|\displaystyle{\prod_{e\in E(\Ga)}|\Aut(f_e)|}  } \cdot
\prod_{(e,v)\in F^S(\vGa)} \frac{|C_{G_v}(k_{(e,v)})|}{r_{(e,v)}}\cdot \prod_{v\in V^{0,2}(\vGa)} \frac{|C_{G_v}(k_v)|}{r_v}.
\end{aligned}
\end{equation}
In the second line in Equation \eqref{eqn:cGa} above,  $k_{(e,v)}$ (resp. $k_v$) is any element in the conjugacy 
class $c_{(e,v)}$ (resp. $c_v$), and
$C_{G_v}(k)$ denotes the centralizer of $k$ in $G_v$.

\subsection{Virtual tangent and normal bundles}
Given  $\vGa\in G(\vUp)$ and
a twisted stable map $f:(\cC,\fx_1,\ldots,\fx_n)\to \hXY$
which represents a point $\xi$ in $\cF_{\vGa}\subset \MghX$, the tangent space
$T^1_\xi$ and obstruction space $T^2_\xi$ of $\MghX$ at $\xi$ fits in
an following exact sequence of $T$-representations
\begin{equation}\label{eqn:B}
0\to B_1 \to B_2 \to T^1_\xi\to B_4 \to B_5\to T^2_\xi \to 0
\end{equation}
where
\begin{eqnarray*}
&& B_1 =  \Ext^0(\Omega_\cC(\fx_1+\cdots+\fx_n),\cO_\cC),\quad B_2 =   H^0(\cC,f^*T\cX)\\
&& B_4 = \Ext^1(\Omega_\cC(\fx_1+\cdots+ \fx_n),\cO_\cC),\quad B_5= H^1(\cC,f^*T\cX)
\end{eqnarray*}
Let $B_i^m$ and $B_i^f$ be the moving and fixed parts
of $B_i$, respectively; let $T^{i,m}_\xi$ and $T^{i,f}_\xi$ be the moving and fixed parts of $T^i_\xi$, respectively.
The exact sequence \eqref{eqn:B} splits into the following two exact sequences: 
\begin{equation}\label{eqn:Bfxi}
0\to B_1^f\to B_2^f\to T^{1,f}_\xi\to B_4^f\to B_5^f\to T^{2,f}_\xi\to 0,
\end{equation}
\begin{equation}\label{eqn:Bmxi}
0\to B_1^m\to B_2^m\to T^{1,m}_\xi \to B_4^m\to B_5^m\to T^{2,m}_\xi\to 0.
\end{equation}
Varying $\xi$ in the fixed locus $\cF_{\vGa}$ gives rise to the following two exact sequences of 
sheaves of $\cO_{\cF_{\vGa}}$-modules on $\cF_{\vGa}$:
\begin{equation}\label{eqn:Bf}
0\to B_1^f\to B_2^f\to T^{1,f}\to B_4^f\to B_5^f\to T^{2,f}\to 0,
\end{equation}
\begin{equation}\label{eqn:Bm}
0\to B_1^m\to B_2^m\to T^{1,m} \to B_4^m\to B_5^m\to T^{2,m}\to 0
\end{equation}
Here we abuse notation: $B_i^f$ (resp. $B_i^m$) are complex vector spaces in \eqref{eqn:Bfxi} (resp. \eqref{eqn:Bmxi}) and are 
sheaves over $\cF_{\vGa}$ in \eqref{eqn:Bf} (resp. \eqref{eqn:Bm}).
The restriction of the exact sequence \eqref{eqn:EF} to $\cF_{\vGa}$ also splits into two exact sequences of
$\cO_{\cF_{\vGa}}$-modules:
\begin{equation}
0\to T^{1,f} \to E^f \to F^f \to T^{2,f}\to 0.
\end{equation}
\begin{equation}
0\to T^{1,m} \to E^m \to F^m \to T^{2,m} \to 0.
\end{equation}

The dual complex of $[E^f \to F^f]$ is a perfect obstruction theory on the smooth proper DM stack 
$\cF_{\vGa}$; in other words, $\cF_{\vGa}$ is equipped with a virtual tangent bundle
$$
T^\vir_{\vGa}  = T^{1,f}- T^{2,f}
$$
As we will see in Section \ref{sec:domain}-\ref{sec:each-graph} below, $T^{1,f}=T\cF_{\vGa}$ is the tangent bundle
of the smooth DM stack $\cF_{\vGa}$, whereas $T^{2,f}=0$, so the virtual tangent
bundle is the tangent bundle. By \cite[Proposition 5.5]{BeFa},  
\begin{theorem}\label{thm:Fvir}
$$
[\cF_{\vGa}]^{\vir} = [\cF_{\vGa}] = c_{\vGa} \prod_{v\in V^S(\vGa)} [\Mbar_{g_v,\vc_v}(\cB G_v)]
$$
\end{theorem}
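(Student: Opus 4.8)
The plan is to reduce the statement to two inputs: that the $T$-fixed part of the deformation--obstruction complex on $\cF_{\vGa}$ is the tautological one, and that $\cF_{\vGa}$ is, as a stack, exactly the quotient $\widetilde{\cM}_\vGa/\Aut(\vGa)$ described by the $2$-cartesian square in Section \ref{sec:graph-notation-orb}. Granting the first, the perfect obstruction theory $[(F^f)^\vee \to (E^f)^\vee]$ on the smooth proper DM stack $\cF_\vGa$ is quasi-isomorphic to $T\cF_\vGa$, i.e. to the tautological obstruction theory, and \cite[Proposition 5.5]{BeFa} gives $[\cF_\vGa]^\vir = [\cF_\vGa] \in A_{d_\vGa}(\cF_\vGa;\bQ)$ with $d_\vGa = \sum_{v\in V^S(\vGa)}(3g_v-3+n_v) = \dim\cF_\vGa$.

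First I would make precise the assertion quoted just before the theorem, namely that $T^{1,f}=T\cF_\vGa$ and $T^{2,f}=0$. Starting from the fixed part \eqref{eqn:Bf} of the $B_\bullet$-sequence, I would argue component by component along the domain curve $\cC$. Over a stable vertex $v\in V^S(\vGa)$ the restriction $f|_{\cC_v}$ lands in $\fp_{\si_v}=\cB G_v$, so $f^*T\hXY|_{\cC_v}$ is a direct sum of $T$-weight line bundles pulled back from the point; the fixed part of $H^0$ and $H^1$ over $\cC_v$ therefore comes only from the weight-zero summand, which is trivial, and matches the deformations of the twisted curve $(\cC_v,\text{nodes and markings})$ with vanishing obstruction, contributing the tangent space to $\Mbar_{g_v,\vc_v}(\cB G_v)$. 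Over an edge component $\cC_e$ the map $f_e$ is rigid up to the finite group $\Aut(f_e)$, so $\cM_e=\cB\Aut(f_e)$ contributes nothing to either fixed piece. The node-smoothing parameters at the stable flags $(e,v)\in F^S(\vGa)$ and at $v\in V^{0,2}(\vGa)$ are tensor products of a tangent line to $\cC_e$ (which has nonzero $T$-weight, since the edge maps nontrivially into a one-dimensional orbit with axial weight $\bw_{(\ep_e,\si_v)}$) with a tangent line to $\cC_v$, hence are moving and do not enter $T^{1,f}$, while the node-smoothings internal to a stable $\cC_v$ are already counted in $T\Mbar_{g_v,\vc_v}(\cB G_v)$. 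Assembling these contributions against the fiber-product presentation of $\widetilde{\cM}_\vGa$ yields $T^{1,f}=T\cF_\vGa$ and $T^{2,f}=0$, so $T^\vir_\vGa = T^{1,f}-T^{2,f} = T\cF_\vGa$. The detailed vanishing statements needed here — in particular the vanishing (or moving-ness) of the relevant $H^1$'s of line bundles on footballs — are exactly the computations carried out in Sections \ref{sec:domain}--\ref{sec:each-graph}, which I would invoke.

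Finally I would identify $[\cF_\vGa]$ with $c_\vGa\prod_{v\in V^S(\vGa)}[\Mbar_{g_v,\vc_v}(\cB G_v)]$ directly from $\cF_\vGa=\widetilde{\cM}_\vGa/\Aut(\vGa)$ and the $2$-cartesian square defining $\widetilde{\cM}_\vGa$: since $\prod_{e\in E(\Ga)}\cM_e$ is a product of classifying stacks of finite groups and $\ev_V$ is étale onto its image in the product of rigidified inertia stacks, pulling $[\cM_\vGa]$ back along $f_V$ and then descending along $\widetilde{\cM}_\vGa\to\cF_\vGa\to\cM_\vGa$ multiplies the class by the degree factors coming from the gerbe orders $r_{(e,v)}$, $r_v$, the conjugacy-class sizes $|c_{(e,v)}|$, $|c_v|$, the edge automorphism groups $|\Aut(f_e)|$, and $|\Aut(\vGa)|$. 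This is precisely the rational number $c_\vGa$ of \eqref{eqn:cGa}, whose two expressions agree because $|G_v|/|c_{(e,v)}| = |C_{G_v}(k_{(e,v)})|$ for any $k_{(e,v)}\in c_{(e,v)}$ (and likewise for $c_v$), completing the proof.

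The main obstacle is the first step — the component-by-component identification $T^{1,f}=T\cF_\vGa$, $T^{2,f}=0$ — since it rests on the explicit structure of $f^*T\hXY$ built from the normal-bundle presentations of Section \ref{sec:GKM}, on cohomology vanishing on footballs (twisted $\bP^1$'s), and on careful bookkeeping of $T$-weights at every node; once that input is in hand, the passage through \cite[Proposition 5.5]{BeFa} and the computation of the fundamental class are formal.
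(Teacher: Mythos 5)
Your proposal is correct and takes essentially the same route as the paper: the paper also proves the theorem by showing that the fixed part of the tangent--obstruction sequence gives $B_1^f=B_2^f$ (the edge reparametrizations cancel), $B_5^f=0$, hence $T^{1,f}=B_4^f=\bigoplus_{v\in V^S(\vGa)}T\Mbar_{g_v,\vc_v}(\cB G_v)=T\cF_{\vGa}$ and $T^{2,f}=0$, then identifies $[\cF_{\vGa}]=c_{\vGa}[\cM_{\vGa}]$ from the fiber-product/quotient description of $\cF_{\vGa}$ and invokes \cite[Proposition 5.5]{BeFa}. The only cosmetic difference is that you phrase the edge contribution as ``rigid up to $\Aut(f_e)$'' rather than exhibiting the isomorphism $B_1^f\cong B_2^f$ explicitly, but the content is the same.
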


The virtual normal bundle of $\cF_{\vGa}$ in $\Mbar(\hXY)$ is
$$
N^\vir_{\vGa} = T^{1,m} - T^{2,m}.
$$
So 
\begin{equation} \label{eqn:euler}
\frac{1}{e_T(N^\vir_{\vGa})} =\frac{e_T(T^{2,m})}{e_T(T^{1,m})} =\frac{e_T(B_1^m)}{e_T(B_4^m)}
\frac{e_T(B_5^m)}{e_T(B_2^m)}. 
\end{equation}
We will compute $\displaystyle{ \frac{e_T(B_1^m)}{e_T(B_4^m)} }$ and
$\displaystyle{ \frac{e_T(B_5^m)}{e_T(B_2^m)} }$ in Section \ref{sec:domain} and Section \ref{sec:map}, respectively.

\subsection{Deformation of the domain} \label{sec:domain}

Recall that the nodes of $\cC$ are
$$
\{\fy_{(e,v)}=\cC_e\cap \cC_v: (e,v)\in F^S(\vGa)\} \cup \{\fy_v =\cC_v :(e,v)\in V^{0,2}(\vGa)\}.
$$
\subsubsection{Infinitesimal automorphisms of the domain} \label{sec:aut-orb}
\begin{eqnarray*}
B_1^f &=&\bigoplus_\edge \Hom(\Omega_{\cC_e}(\fy_{(e,v)}+\fy_{(e,v')}),\cO_{\cC_e})\\
&=& \bigoplus_\edge H^0(\cC_e, T\cC_e(-\fy_{(e,v)}-\fy_{(e,v')})\\
B_1^m&=& \bigoplus_{\substack{ v\in V^{0,1}(\vGa)\\ (e,v)\in F(\Ga)}} T_{ \fy_{(e,v)} }\cC_e 
\end{eqnarray*}
We define
$$
w_{(e,v)} :=  e^T(T_{\fy_{(e,v)} }\cC_e)=\frac{r_{(\ep_e,\si_v)}\bw_{(\ep_e,\si_v)}}{r_{(e,v)}d_e} \in 
H_T^2(\fy_{(e,v)})= M_\bQ.
$$

\subsubsection{Inifinitesimal deformations of the domain} \label{sec:deform-orb}
Given any $v\in V^S(\Ga)$, define
a divisor $\bx_v$ of $\cC_v$ by
$$
\bx_v=\sum_{i\in S_v} \fx_i + \sum_{e\in E_v} \fy_{(e,v)}.
$$
Then
\begin{eqnarray*}
B_4^f&=& \bigoplus_{v\in V^S(\vGa)} \Ext^1(\Omega_{\cC_v}(\bx_v),\cO_\cC) = \bigoplus_{v\in V^S(\vGa)}
T_{[(\cC_v,\bx_v)]} \Mbar_{g_v, \vi_v}(\cB G_v)\\
B_4^m &= & \bigoplus_{\substack{ v\in V^{0,2}(\vGa)\\ E_v=\{e,e'\} } }
T_{\fy_v}\cC_e\otimes T_{\fy_v} \cC_{e'} \oplus \bigoplus_{(e,v)\in F^S(\vGa)} 
T_{ \fy(e,v) }\cC_v\otimes T_{ \fy_{(e,v)} } \cC_e
\end{eqnarray*}
where $\displaystyle{ e^T (T_{\fy_v}\cC_e \otimes T_{\fy_v} \cC_{e'})= w_{(e,v)}+w_{(e',v)} }$ 
if $v\in V^{0,2}(\vGa)$ and $E_v=\{ e,e'\}$, and 
$\displaystyle{ e^T (T_{\fy_{(e,v)} }\cC_v \otimes T_{\fy_{(e,v)} } \cC_e) = w_{(e,v)}-\frac{\bar{\psi}_{(e,v)}}{r_{(e,v)}} }$
if $(e,v)\in F^S(\vGa)$.

\subsubsection{Unifying stable and unstable vertices}
From the discussion in Section \ref{sec:aut-orb} and Section \ref{sec:deform-orb},
\begin{equation} \label{eqn:Bonefour-orb}
\begin{aligned}
\frac{e^T(B_1^m)}{e^T(B_4^m)}=& 
\prod_{\substack{ v\in V^{0,1}(\vGa)\\ (e,v)\in F(\Ga)} } w_{(e,v)} 
\prod_{\substack{ v\in V^{0,2}(\vGa)\\ E_v=\{e,e'\} } }
\frac{1}{w_{(e,v)}+ w_{(e',v)} } \\
& \quad \cdot \prod_{v\in V^S(\vGa)}
\frac{1}{\prod_{e\in E_v}\Big(w_{(e,v)}-\frac{\bar{\psi}_{(e,v)}}{r_{(e,v)}}\Big) }.
\end{aligned}
\end{equation}

To unify the stable and unstable vertices, we use the following
convention for the empty sets $\Mbar_{0,(\{1\})}(\cB G)$ and $\Mbar_{0, ([h],[h^{-1}])}(\cB G)$,
where $1\in G$ is the identity element, and $[h]$ is the conjugacy class of $h\in G$.
Let $G$ be a finite group and let $w_1, w_2$ be formal variables.
\begin{itemize}
\item   $\Mbar_{0,(\{1\})}(\cB G)$ is a $-2$ dimensional space, and
\begin{equation}\label{eqn:one-orb}
\int_{\Mbar_{0,(\{1\})}(\cB G)}\frac{1}{w_1-\bar{\psi}_1}=\frac{w_1}{ |G| } 
\end{equation}
\item  $\Mbar_{0,([h],[h^{-1}])}(\cB G)$ is a $-1$ dimensional space, and
\begin{equation}\label{eqn:two-orb}
\int_{\Mbar_{0,([h], [h^{-1}])} (\cB G)}\frac{1}{(w_1-\bar{\psi}_1)(w_2-\bar{\psi}_2)}= \frac{1}{(w_1+w_2)\cdot |C_G(h)| }
\end{equation}
\begin{equation}\label{eqn:one-one-orb}
\int_{\Mbar_{0,([h],[h^{-1}])}(\cB G)}\frac{1}{w_1-\bar{\psi}_1} =\frac{1}{ |C_G(h)| }
\end{equation}
\end{itemize}
From \eqref{eqn:one-orb}, \eqref{eqn:two-orb}, \eqref{eqn:one-one-orb}, we obtain the following identities for non-stable vertices:
\begin{itemize}
\item[(i)] If $v\in V^{0,1}(\vGa)$ and $(e,v)\in F(\Ga)$, then $r_{(e,v)}=1$, and 
$$
|G_v| \int_{\Mbar_{0,(\{1\})} (\cB G_v) } \frac{1}{w_{(e,v)}-\bar{\psi}_{(e,v)}} = w_{(e,v)}.
$$
\item[(ii)] If $v\in V^{0,2}(\vGa)$, $E_v=\{ e,e'\}$, and $k_v \in c_{(e,v)}$, then $c_{(e',v)} =\overline{c_{(e,v)}} = [k_v^{-1}]$ and
\begin{eqnarray*}
&&  \frac{|C_{G_v}(k_v)|}{r_v}\cdot \frac{|C_{G_v}(k_v)|}{r_v} \cdot \int_{\Mbar_{0, ([k_v], [k_v^{-1}])}(\cB G_v)  }
\frac{1}{ \Big(w_{(e,v)} -\frac{\bar{\psi}_{(e,v)}}{r_v}\Big)\Big(w_{(e',v)}-\frac{\bar{\psi}_{(e',v)}}{r_v}\Big) }\\
&=&\frac{|C_{G_v}(k_v)|}{r_v} \cdot \frac{1}{w_{(e,v)} + w_{(e',v)}}.
\end{eqnarray*}
\item[(iii)]If $v\in V^{1,1}(\vGa)$ and $(e,v)\in F(\Ga)$, then
$$
\frac{|C_{G_v}(h)|}{r_{(e,v)}} \int_{\Mbar_{0,([h],[h^{-1}])}(\cB G_v)}\frac{1}{w_{(e,v)}- \frac{\bar{\psi}_1}{r_{(e,v)}} } =1.
$$
\end{itemize}
We then redefine $\cM_\vGa$ and $c_\vGa$ as follows:
\begin{equation}
\cM_\vGa =\prod_{v\in V(\Ga)} \Mbar_{g_v, \vi_v}(\cB G_v),\quad  [\cF_\vGa]=c_\vGa [\cM_\vGa],
\end{equation}
\begin{equation}\label{eqn:unified-c}
c_\vGa = \frac{1}{|\Aut(\vGa)|\prod_{e\in E(\Ga)} |\Aut(f_e)|}  \prod_{(e,v)\in F(\vGa)}\frac{|c_{G_v}(k_{(e,v)}|}{r_{(e,v)}}, 
\end{equation} 
where $k_{(e,v)}$ is an element in the conjugacy class $c_{(e,v)}$.

With the above conventions \eqref{eqn:one-orb}--\eqref{eqn:unified-c}, we may rewrite \eqref{eqn:Bonefour-orb} in 
the following form. 
\begin{proposition}\label{B1B4}
$$
\frac{e^T(B_1^m)}{e^T(B_4^m)}=
\prod_{v\in V(\Ga)} \frac{1}{ \prod_{e\in E_v}\Big(w_{(e,v)}-\frac{\bar{\psi}_{(e,v)}}{r_{(e,v)}}\Big)  }.
$$
\end{proposition}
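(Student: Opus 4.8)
The plan is to deduce Proposition \ref{B1B4} from formula \eqref{eqn:Bonefour-orb} by processing the three types of unstable vertices one at a time and re-expressing each suppressed unstable vertex through the formal conventions \eqref{eqn:one-orb}--\eqref{eqn:one-one-orb} (equivalently, the identities (i)--(iii) recorded just before the Proposition). Throughout, the asserted equality is to be read in the sense that both sides give the same result when integrated against $[\cF_\vGa]=c_\vGa[\cM_\vGa]$, with $\cM_\vGa$ and $c_\vGa$ carrying their \emph{old} meaning \eqref{eqn:cGa} on the left of \eqref{eqn:Bonefour-orb} and their \emph{unified} meaning \eqref{eqn:unified-c} on the right; the content of the proof is precisely that these two choices of $(\cM_\vGa,c_\vGa)$ are matched by the rewriting. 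First, observe that in \eqref{eqn:Bonefour-orb} a stable vertex $v\in V^S(\vGa)$ already contributes the factor $\prod_{e\in E_v}\bigl(w_{(e,v)}-\bar{\psi}_{(e,v)}/r_{(e,v)}\bigr)^{-1}$ in exactly the shape demanded by the Proposition, so those vertices require no modification and it remains to treat $V^{0,1}(\vGa)$, $V^{1,1}(\vGa)$ and $V^{0,2}(\vGa)$.

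For $v\in V^{0,1}(\vGa)$, with its single flag $(e,v)$ one has $r_{(e,v)}=1$ and $k_{(e,v)}$ the identity of $G_v$, so $|C_{G_v}(k_{(e,v)})|/r_{(e,v)}=|G_v|$; identity (i) (i.e.\ \eqref{eqn:one-orb}) lets me replace the factor $w_{(e,v)}$ in \eqref{eqn:Bonefour-orb} by $|G_v|\int_{\Mbar_{0,(\{1\})}(\cB G_v)}\bigl(w_{(e,v)}-\bar{\psi}_{(e,v)}\bigr)^{-1}$, which inserts the formal factor $\Mbar_{0,(\{1\})}(\cB G_v)=\Mbar_{g_v,\vi_v}(\cB G_v)$ into $\cM_\vGa$, the (one-term) integrand product $\prod_{e'\in E_v}\bigl(w_{(e',v)}-\bar{\psi}_{(e',v)}/r_{(e',v)}\bigr)^{-1}$, and the coefficient $|C_{G_v}(k_{(e,v)})|/r_{(e,v)}$ into $c_\vGa$. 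For $v\in V^{1,1}(\vGa)$, which contributes the trivial factor $1$ to \eqref{eqn:Bonefour-orb}, identity (iii) (i.e.\ \eqref{eqn:one-one-orb}) rewrites $1=\bigl(|C_{G_v}(h)|/r_{(e,v)}\bigr)\int_{\Mbar_{0,([h],[h^{-1}])}(\cB G_v)}\bigl(w_{(e,v)}-\bar{\psi}_{(e,v)}/r_{(e,v)}\bigr)^{-1}$, where $h$ generates the local monodromy and $\bar{\psi}_{(e,v)}$ is the descendant class at the node; this again inserts $\Mbar_{0,([h],[h^{-1}])}(\cB G_v)=\Mbar_{g_v,\vi_v}(\cB G_v)$ into $\cM_\vGa$, the integrand product $\prod_{e'\in E_v}\bigl(w_{(e',v)}-\bar{\psi}_{(e',v)}/r_{(e',v)}\bigr)^{-1}$, and the coefficient $|C_{G_v}(k_{(e,v)})|/r_{(e,v)}$ into $c_\vGa$. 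For $v\in V^{0,2}(\vGa)$ with flags $(e,v),(e',v)$ and $r_{(e,v)}=r_{(e',v)}=r_v$, identity (ii) (i.e.\ \eqref{eqn:two-orb}) rewrites the factor $\bigl(w_{(e,v)}+w_{(e',v)}\bigr)^{-1}$ as $\bigl(|C_{G_v}(k_v)|/r_v\bigr)\int_{\Mbar_{0,([k_v],[k_v^{-1}])}(\cB G_v)}\prod_{e''\in E_v}\bigl(w_{(e'',v)}-\bar{\psi}_{(e'',v)}/r_{(e'',v)}\bigr)^{-1}$, inserting $\Mbar_{0,([k_v],[k_v^{-1}])}(\cB G_v)=\Mbar_{g_v,\vi_v}(\cB G_v)$ into $\cM_\vGa$, the two-term integrand product, and a \emph{further} coefficient $|C_{G_v}(k_v)|/r_v$ into $c_\vGa$; combined with the factor $|G_v|/(r_v|c_v|)=|C_{G_v}(k_v)|/r_v$ already attached to $V^{0,2}$ vertices in the old $c_\vGa$ of \eqref{eqn:cGa}, this produces $\bigl(|C_{G_v}(k_v)|/r_v\bigr)^2=\prod_{e''\in E_v}|C_{G_v}(k_{(e'',v)})|/r_{(e'',v)}$.

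It then remains to collect terms. After the three steps the moduli factor is $\prod_{v\in V^S(\vGa)}\Mbar_{g_v,\vc_v}(\cB G_v)$ times the inserted factors for $V^{0,1}$, $V^{1,1}$, $V^{0,2}$, which is exactly $\cM_\vGa=\prod_{v\in V(\Ga)}\Mbar_{g_v,\vi_v}(\cB G_v)$; the coefficient is the old $c_\vGa$ of \eqref{eqn:cGa} multiplied by $\prod_{v\in V^{0,1}}|G_v|$, by $\prod_{v\in V^{1,1}}|C_{G_v}(h_v)|/r_{(e_v,v)}$, and by $\prod_{v\in V^{0,2}}|C_{G_v}(k_v)|/r_v$, which, using that for $v\in V^{0,1}$ one has $k_{(e,v)}=1$ and $r_{(e,v)}=1$ so $|G_v|=|C_{G_v}(k_{(e,v)})|/r_{(e,v)}$, for $v\in V^{1,1}$ one has $|C_{G_v}(h_v)|/r_{(e_v,v)}=|C_{G_v}(k_{(e_v,v)})|/r_{(e_v,v)}$, and the $V^{0,2}$ computation above, reorganizes into $\frac{1}{|\Aut(\vGa)|\prod_{e\in E(\Ga)}|\Aut(f_e)|}\prod_{(e,v)\in F(\vGa)}\frac{|C_{G_v}(k_{(e,v)})|}{r_{(e,v)}}$, i.e.\ the unified $c_\vGa$ of \eqref{eqn:unified-c}; and the surviving integrand is $\prod_{v\in V(\Ga)}\prod_{e\in E_v}\bigl(w_{(e,v)}-\bar{\psi}_{(e,v)}/r_{(e,v)}\bigr)^{-1}$, which is the right-hand side of the Proposition. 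This completes the plan.

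\textbf{Expected main obstacle.} The difficulty is not geometric but bookkeeping: keeping the coefficient accounting exactly consistent. The delicate point is that at a $V^{0,2}$ vertex the old $c_\vGa$ of \eqref{eqn:cGa} already carries a factor $|C_{G_v}(k_v)|/r_v$, so identity (ii) must supply only one additional copy to reach the flag product $\prod_{e''\in E_v}|C_{G_v}(k_{(e'',v)})|/r_{(e'',v)}$, whereas at $V^{0,1}$ and $V^{1,1}$ vertices the old $c_\vGa$ carries nothing, so identities (i) and (iii) must supply the entire flag factor; one must also track carefully that for $V^{0,1}$ vertices $r_{(e,v)}=1$, so that $\bar{\psi}_{(e,v)}/r_{(e,v)}=\bar{\psi}_{(e,v)}$ and the rewriting matches \eqref{eqn:one-orb} verbatim, and that in the $V^{1,1}$ case the descendant class in \eqref{eqn:one-one-orb} is the one at the node rather than at the marked point.
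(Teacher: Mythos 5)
Your proposal is correct and follows essentially the same route as the paper: Proposition \ref{B1B4} is exactly the rewriting of \eqref{eqn:Bonefour-orb} obtained by absorbing the unstable-vertex factors into the formal conventions \eqref{eqn:one-orb}--\eqref{eqn:one-one-orb} (identities (i)--(iii)) while simultaneously passing from \eqref{eqn:cGa} to the unified $c_\vGa$ of \eqref{eqn:unified-c} and enlarging $\cM_\vGa$ to a product over all vertices. Your explicit bookkeeping of the flag factors $|C_{G_v}(k_{(e,v)})|/r_{(e,v)}$, including the point that a $V^{0,2}$ vertex already carries one such factor in \eqref{eqn:cGa} and needs only one more, matches the paper's intended argument.
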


The following lemma shows that the conventions \eqref{eqn:one-orb}, \eqref{eqn:two-orb}, and 
\eqref{eqn:one-one-orb} are consistent with
the stable case $\Mbar_{0,(c_1,\ldots, c_n)}(\cB G)$, $n\geq 3$.
\begin{lemma} Let $G$ be a finite group. Let $\vc=(c_1,\ldots, c_n)\in \text{Conj}(G)^n$. Let $w_1,\ldots,w_n$ be formal variables. Then 
\begin{enumerate}
\item[(a)]$\displaystyle{ \int_{\Mbar_{0,\vc}(\cB G)}\frac{1}{\prod_{i=1}^n(w_i-\bar{\psi}_i)}
=\frac{|V^{G}_{0,\vec{c}}|}{|G|\cdot  w_1\cdots w_n}\Big(\frac{1}{w_1}+\cdots \frac{1}{w_n}\Big)^{n-3} }$.
\item[(b)]$\displaystyle{\int_{\Mbar_{0,\vc}(\cB G)}\frac{1}{w_1-\bar{\psi}_1} =\frac{ |V^{G}_{0,\vec{c}}|}{|G|}w_1^{2-n} }$.
\end{enumerate}
\end{lemma}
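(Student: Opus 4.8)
The plan is to reduce both identities to Jarvis--Kimura's formula (Theorem~\ref{thm:orb-psi}) together with the classical genus-zero $\psi$-class evaluation on $\Mbar_{0,n}$, after expanding the rational functions $\tfrac{1}{w_i-\bar{\psi}_i}$ as geometric series in the nilpotent classes $\bar{\psi}_i$. First I would dispose of the degenerate cases: if $V^G_{0,\vc}=\emptyset$ then $\Mbar_{0,\vc}(\cB G)$ is empty, so both sides of (a) and (b) vanish; and since the genus-zero stable range requires $2g-2+n>0$, i.e.\ $n\ge 3$, I take $n\ge 3$ and $V^G_{0,\vc}\neq\emptyset$ from now on. In this range $\Mbar_{0,\vc}(\cB G)$ is a smooth proper DM stack which is finite over $\Mbar_{0,n}$ (of degree $|V^G_{0,\vc}|/|G|$) via the forgetful map $\ep$, with $\bar{\psi}_i=\ep^*\psi_i$; in particular $\dim\Mbar_{0,\vc}(\cB G)=n-3$ and each $\bar{\psi}_i$ is nilpotent, so the expansion $\frac{1}{w_i-\bar{\psi}_i}=\sum_{a_i\ge 0}\bar{\psi}_i^{\,a_i}/w_i^{\,a_i+1}$ terminates.

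For part (a), multiplying these expansions and integrating term by term gives
$$
\int_{\Mbar_{0,\vc}(\cB G)}\prod_{i=1}^n\frac{1}{w_i-\bar{\psi}_i}
=\sum_{a_1,\dots,a_n\ge 0}\frac{1}{w_1^{a_1+1}\cdots w_n^{a_n+1}}\int_{\Mbar_{0,\vc}(\cB G)}\bar{\psi}_1^{a_1}\cdots\bar{\psi}_n^{a_n},
$$
where the integral on the right vanishes unless $a_1+\cdots+a_n=n-3$. In that case Theorem~\ref{thm:orb-psi}, combined with the classical evaluation $\int_{\Mbar_{0,n}}\psi_1^{a_1}\cdots\psi_n^{a_n}=\binom{n-3}{a_1,\dots,a_n}$, gives $\int_{\Mbar_{0,\vc}(\cB G)}\bar{\psi}_1^{a_1}\cdots\bar{\psi}_n^{a_n}=\frac{|V^G_{0,\vc}|}{|G|}\binom{n-3}{a_1,\dots,a_n}$. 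Substituting and recognizing the multinomial expansion
$$
\Bigl(\frac{1}{w_1}+\cdots+\frac{1}{w_n}\Bigr)^{n-3}=\sum_{a_1+\cdots+a_n=n-3}\binom{n-3}{a_1,\dots,a_n}\frac{1}{w_1^{a_1}\cdots w_n^{a_n}}
$$
yields exactly $\frac{|V^G_{0,\vc}|}{|G|\,w_1\cdots w_n}\bigl(\frac{1}{w_1}+\cdots+\frac{1}{w_n}\bigr)^{n-3}$ (for $n=3$ this is $|V^G_{0,\vc}|/(|G|\,w_1w_2w_3)$, consistent with $\int_{\Mbar_{0,\vc}(\cB G)}1=|V^G_{0,\vc}|/|G|$). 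Part (b) is the same computation retaining only the $i=1$ factor: $\int_{\Mbar_{0,\vc}(\cB G)}\frac{1}{w_1-\bar{\psi}_1}=\sum_{a\ge 0}w_1^{-a-1}\int_{\Mbar_{0,\vc}(\cB G)}\bar{\psi}_1^{a}$, and dimension forces $a=n-3$, with $\int_{\Mbar_{0,\vc}(\cB G)}\bar{\psi}_1^{n-3}=\frac{|V^G_{0,\vc}|}{|G|}\int_{\Mbar_{0,n}}\psi_1^{n-3}=\frac{|V^G_{0,\vc}|}{|G|}$, giving $\frac{|V^G_{0,\vc}|}{|G|}w_1^{2-n}$.

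I do not expect a genuine obstacle here: once Theorem~\ref{thm:orb-psi} is granted, the lemma is a purely formal consequence of a geometric series expansion and the multinomial theorem. The only input that is not itself formal is the classical evaluation $\int_{\Mbar_{0,n}}\psi_1^{a_1}\cdots\psi_n^{a_n}=\binom{n-3}{a_1,\dots,a_n}$ (for $\sum a_i=n-3$), which follows by induction on $n$ from the string equation with base case $\int_{\Mbar_{0,3}}1=1$; this is standard and is the only place any actual argument is needed.
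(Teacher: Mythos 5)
Your argument for the stable range $n\ge 3$ is exactly the paper's proof: Jarvis--Kimura (Theorem~\ref{thm:orb-psi}) combined with the classical evaluation $\int_{\Mbar_{0,n}}\psi_1^{a_1}\cdots\psi_n^{a_n}=\frac{(n-3)!}{a_1!\cdots a_n!}$, followed by the multinomial expansion, so for that case the proposal is correct and takes essentially the same route. The one thing you dropped is that the lemma is stated for all $n$, and its whole purpose is to confirm that the unstable conventions are consistent with the stable formula: for $n=1$ and $n=2$ the asserted identities are not in the stable range but reduce directly to the defining conventions \eqref{eqn:one-orb}, \eqref{eqn:two-orb}, \eqref{eqn:one-one-orb} (e.g.\ for $n=2$, $|V^G_{0,([h],[h^{-1}])}|=|G|/|C_G(h)|$ turns the right-hand sides into $\frac{1}{|C_G(h)|(w_1+w_2)}$ and $\frac{1}{|C_G(h)|}$), a trivial check you should include rather than excluding those cases by fiat.
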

\begin{proof}
The unstable cases $n=1$ and $n=2$ follow from the definitions
\eqref{eqn:one-orb} and \eqref{eqn:two-orb}, respectively.
The stable case ($n\geq 3$) follows from Theorem \ref{thm:orb-psi} and the well known identity below. 
$$
\int_{\Mbar_{0,n}}\psi_1^{a_1}\cdots \psi_n^{a_n}= 
\frac{(n-3)!}{a_1!\cdots a_n!}.
$$
\end{proof}

\subsection{Deformation of the map}\label{sec:map} 
We first introduce some notation. Given $\si\in V(\Up)$ and $c\in\Conj(G_\si)$, 
let $\bigl(T_{\fp_\si}\cX\bigr)^c$ denote the subspace of $T_{\fp_\si}\cX$ which is invariant
under the action of any $k\in c$ (or equivalently, of some $k\in c$). Then $\bigl(T_{\fp_\si}\cX\bigr)^c =\bigl(T_{\fp_\si}\cX\bigr)^{\bar{c}}$,
where $\bar{c}=\{k^{-1}: k\in c\}$. 

Consider the normalization sequence
\begin{equation}\label{eqn:normalize-orb}
\begin{aligned}
0 &\to \cO_\cC\to \bigoplus_{v\in V^S(\vGa)} \cO_{\cC_v} \oplus \bigoplus_{e\in E(\vGa)} \cO_{\cC_e}\\
& \to \bigoplus_{v\in V^{0,2}(\vGa)} \cO_{\fy_v}
\oplus \bigoplus_{(e,v)\in F^S(\vGa) } \cO_{ \fy_{(e,v)} }\to 0.
\end{aligned}
\end{equation}
We twist the above short exact sequence of sheaves
by $f^*T\cX$. The resulting short exact sequence gives
rise a long exact sequence of cohomology groups
\begin{eqnarray*}
0&\to& B_2 \to \bigoplus_{v\in V^S(\vGa)} H^0(\cC_v)\oplus
\bigoplus_{e\in E(\Ga)}H^0(\cC_e) \\
&\to& \bigoplus_{\tiny \begin{array}{c} v\in V^{0,2}(\vGa) \\ E_v=\{e,e'\} \end{array}} (T_{f(\fy_v)}\cX)^{c_{(e,v)}}
\oplus \bigoplus_{(e,v)\in F^S(\vGa)} \bigl(T_{f(\fy_{(e,v)})}\cX\bigr)^{c_{(e,v)}} \\ 
&\to& B_5 \to \bigoplus_{v\in V^S(\vGa)} H^1(\cC_v)\oplus
\bigoplus_{e\in E(\Ga)}H^1(\cC_e) \to 0.
\end{eqnarray*}
where $\displaystyle{H^i(\cC_v) = H^i(\cC_v, f_v^*T\cX)}$ and $\displaystyle{H^i(\cC_e) = H^i(\cC_e, f_e^*T\cX) }$
for $i=0,1$. 

\medskip

$f(\fy_v)=\fp_{\si_v}= f(\fy_{(e,v)})$. Given $(e,v)\in F(\Gamma)$, define
\begin{equation}\label{eqn:hev}
\bh(e,v) =e^T(\bigl(T_{\fp_\si} \cX\bigr)^{c_{(e,v)}}) 
=\prod_{\substack{ \ep \in E_{\si_v}\\ \phi_{(\ep,\si_v)}(c_{(e,v)})=1} } \bw_{(\ep,\si_v)}. 
\end{equation}
The map $B_1\to B_2$  sends
$H^0(\cC_e, T\cC_e(-\fy_{(e,v)}-\fy_{(e',v)}))$ isomorphically
to $H^0(\cC_e, f_e^*T\fl_{\ep_e})^f$, the 
fixed part of $H^0(\cC_e, f_e^*T\fl_{\ep_e})$.

It remains to compute
$$
\bh(v) := \frac{ e^T(H^1(\cC_v, f_v^*T\cX)^m) }{e^T(H^0(\cC_v, f_v^*T\cX)^m)} ,\quad
\bh(e) := \frac{e^T(H^1(\cC_e, f_e^*T\cX)^m)}{e^T(H^0(\cC_e, f_e^* T\cX)^m)}.
$$
The formulae of $\bh(v)$ and $\bh(e)$ will be given by Equation \eqref{eqn:hv} and
Equation \eqref{eqn:he} below. Before deriving these formulae ,
we introduce some notation. 
\begin{itemize}
\item If $v\in V^S(\vGa)$, then there is a cartesian diagram
$$
\begin{CD}
\tcC_v @>{\tf_v}>> \mathrm{pt}\\
@VVV @VVV\\
\cC_v @>{f_v }>> \cB G_v.
\end{CD} 
$$
Let $\hG_v$ denote the subgroup of $G_v$ generated by the monodromies
of the $G_v$-cover $\tcC_v\to \cC_v$. Then 
the number of connected components of 
$\tcC_v$ is $|G_v/\hG_v|$, and each connected component
is a $\hG_v$-cover of $\cC_v$.

\item Given a 1-dimensional representation $\phi: G_v\to \bC^*$, we $\phi$ of $G_v$, we define
$$
\Lambda_\phi^\vee(u)=\sum_{i=0}^{\rank \bE_\phi} (-1)^i \lambda_i^\phi u^{\rank \bE_\phi-i},
$$
where $\lambda_i^\phi \in A^i(\Mbar_{g_v, \vc_v}(\cB G_v))$ are Hurwitz-Hodge classes associated
to $\phi$, and $\rank\bE_\rho$ is the rank of the $\phi$-twisted Hurwitz-Hodge
bundle $\bE_\rho\to\Mbar_{g_v,\vc_v}(\cB G_v)$ (see Section \ref{sec:hurwitz-hodge}).  
\item Given a $G_v$ representation $V$, let $V^{G_v}$ denote the subspace
on which $G_v$ acts trivially. 
\end{itemize}
\begin{lemma}\label{lm:hv}
Suppose that $v\in V^S(\vGa)$ and $\vf(v)=\si\in V(\Up)$. Then
\begin{equation} \label{eqn:hv}
\bh(v) =  \frac{ \displaystyle{ \prod_{\ep \in E_\si } \Lambda^\vee_{\phi(\ep,\si)}(\bw_{(\ep,\si)})} }{ 
\displaystyle{ \prod_{\ep\in E_\si, \hG_v \subset G_\ep} \bw_{(\ep,\si)}} }
\end{equation}
\end{lemma}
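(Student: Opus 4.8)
The plan is to compute $\bh(v)$ by pulling back to the rigidified cover $\tcC_v \to \cC_v$ and applying the Riemann--Roch theorem for twisted curves (Section \ref{sec:orbRR}) together with the standard localization bookkeeping. First I would decompose the $G_v$-representation $T_{\fp_\si}\cX = \bigoplus_{\ep\in E_\si} L_\ep$ into one-dimensional eigen-lines, where the $G_v$-action on $L_\ep$ is via $\phi_{(\ep,\si)}:G_v\to\bC^*$ and the $T$-weight is $\bw_{(\ep,\si)}$ (this is exactly the data recorded at the vertex $\si$ in Section \ref{sec:zero-dim}). Since $f_v$ maps $\cC_v$ to the fixed point $\fp_\si = \cB G_v$, the pullback $f_v^* T_{\fp_\si}\cX$ splits $T$-equivariantly as $\bigoplus_{\ep\in E_\si} f_v^*\cE_{\phi_{(\ep,\si)}}$, where $\cE_{\phi}$ is the line bundle on $\cB G_v$ attached to the character $\phi$. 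Hence
$$
\bh(v) = \prod_{\ep\in E_\si} \frac{e^T\bigl(H^1(\cC_v, f_v^*\cE_{\phi_{(\ep,\si)}})^m\bigr)}{e^T\bigl(H^0(\cC_v, f_v^*\cE_{\phi_{(\ep,\si)}})^m\bigr)},
$$
and each factor carries the single $T$-weight $\bw_{(\ep,\si)}$ on the ambient line.

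The next step is to evaluate, for a fixed character $\phi = \phi_{(\ep,\si)}$, the contribution $e^T(H^1)^m/e^T(H^0)^m$. Here $\pi_* f_v^*\cE_\phi$ and $R^1\pi_* f_v^*\cE_\phi$ are, up to sign and dualizing, the $\phi$-twisted Hurwitz--Hodge bundle $\bE_\phi$ of Section \ref{sec:hurwitz-hodge}: when $\phi$ is nontrivial on $\hG_v$ (the subgroup of $G_v$ generated by the monodromies of $\tcC_v\to\cC_v$), one has $H^0(\cC_v, f_v^*\cE_\phi)=0$, while $H^1(\cC_v, f_v^*\cE_\phi)^\vee = \bE_\phi$, so the factor becomes the $T$-equivariant Euler class of $\bE_\phi$ twisted by the weight $\bw_{(\ep,\si)}$, i.e. $\sum_i (-1)^i \lambda_i^{\phi} \bw_{(\ep,\si)}^{\rank\bE_\phi - i} = \Lambda_\phi^\vee(\bw_{(\ep,\si)})$, matching the numerator. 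When $\phi$ is trivial on $\hG_v$ — equivalently $\hG_v\subset G_\ep = \Ker\phi_{(\ep,\si)}$ — the bundle $f_v^*\cE_\phi$ descends to a line bundle pulled back from the quotient curve, so $H^0$ acquires a one-dimensional trivial summand contributing an extra factor of $\bw_{(\ep,\si)}$ in the denominator (from the constant sections on each connected component, reorganized via the $|G_v/\hG_v|$ components), which is precisely the denominator $\prod_{\ep\in E_\si,\ \hG_v\subset G_\ep}\bw_{(\ep,\si)}$; the remaining higher cohomology still contributes $\Lambda_\phi^\vee(\bw_{(\ep,\si)})$. I would handle the two cases uniformly by writing $e^T(H^1)^m - e^T(H^0)^m$ as a $K$-theory class and applying twisted Riemann--Roch \eqref{eqn:twistedRR} to track ranks and the age corrections at the marked/nodal points.

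The main obstacle I expect is the careful bookkeeping of the \emph{moving} part: $H^0(\cC_v, f_v^* T\cX)$ always contains the fixed tangent directions coming from the deformations \emph{within} the fixed locus, which must be excluded, and one must check that after removing the fixed part the only surviving $H^0$ contribution is the one from characters trivial on $\hG_v$ (giving the denominator), with no spurious fixed pieces hiding in $H^1$. Relatedly, the passage between $\bE_\phi$ defined on $\Mbar_{g_v,\vc_v}(\cB G_v)$ and the cohomology of $f_v^*\cE_\phi$ on the universal curve over $\cF_\vGa$ requires identifying $\vc_v$ (the monodromy data at nodes and markings of $\cC_v$, reconstructed in Section \ref{sec:graph-notation-orb}) with the conjugacy-class data determining $\age_{x_i}(\cE_\phi)$, and checking compatibility of the two involutions. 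Once these identifications are in place, the formula \eqref{eqn:hv} follows by assembling the per-character factors over $\ep\in E_\si$.
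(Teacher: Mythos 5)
Your proposal follows essentially the same route as the paper's proof: decompose $f_v^*T\cX$ into the character lines $\bC_{\phi_{(\ep,\si)}}$ with $T$-weights $\bw_{(\ep,\si)}$, pass to the cover $\tcC_v\to\cC_v$ with monodromy group $\hG_v$ so that $H^0$ contributes a one-dimensional (weight $\bw_{(\ep,\si)}$) piece exactly when $\hG_v\subset G_\ep$ and $H^1$ is identified with $\bE_{\phi_{(\ep,\si)}}^\vee$, yielding the factors $\Lambda^\vee_{\phi_{(\ep,\si)}}(\bw_{(\ep,\si)})$. The only (harmless) deviation is the worry about a fixed part hiding in $H^0(\cC_v,f_v^*T\cX)$ — by the GKM hypothesis all weights $\bw_{(\ep,\si)}$ are nonzero, so the entire $H^0$ and $H^1$ are moving, exactly as the paper uses.
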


\begin{proof} Let $\bC_\phi$ denote the 1-dimensional representation associated to $\phi: G_v\to \bC^*$. Then
$$
H^i(\cC_v, f_v^*T\cX) = \left(H^i(\tcC_v,\cO_{\tcC_v})\otimes T_\si \cX \right)^{G_v} 
\cong \bigoplus_{\ep \in E_\si} \left(H^i(\tcC_v, \cO_{\tcC_v})\otimes \bC_{\phi_{(\ep,\sigma)} } \right)^{G_v},
$$
where $H^0(\tcC_v,\cO_{\tcC_v})$ is the regular representation of $G_v/\hG_v$.
The surjective group homomorphism $G_v\to G_v/\hG_v$ induces an injective map
$\Hom(G_v/\hG_v,\bC^*) \to \Hom(G_v,\bC^*)$, whose image
contains $\phi^{-1}_{(\ep,\si)}$ iff $\hG_v\subset G_\ep$. So 
$$
e_T\Big( \big(H^0(\tcC_v,\cO_{\tcC_v}) \otimes \bC_{\phi_{(\ep,\sigma)} }\big)^{G_v}\Big)
=\begin{cases} \bw_{(\ep,\si)}, & \hG_v\subset G_\ep,\\
1, & \hG_v \not \subset G_\ep.
\end{cases}
$$
Therefore,
\begin{equation}\label{eqn:denominator}
e_T(H^0(\cC_v, f_v^*T\cX)^m)=e_T(H^0(\cC_v, f_v^*T\cX)) = \prod_{\ep\in E_\si, \hG_v \subset G_\ep} \bw_{(\ep,\si)}
\end{equation}
$$
\bigl(H^1(\tcC_v,\cO_{\cC_v})\otimes \bC_{\phi(\ep,\si)}\bigr)^{G_v} = \bE_{\phi(\ep,\si)}^\vee,
$$
so 
\begin{equation}\label{eqn:numerator}
e_T(H^1(\cC_v, f_v^*T\cX)^m)= e_T(H^1(\cC_v, f_v^*T\cX)) = \prod_{\ep \in E_\si } \Lambda^\vee_{\phi_{(\ep,\si)}} (\bw_{(\ep,\si)}).
\end{equation}
Equation \eqref{eqn:hv} follows from \eqref{eqn:denominator} and \eqref{eqn:numerator}. 
\end{proof}

Given any real number $x$, let $\lfloor x \rfloor$ denote the greatest integer which is less or equal to $x$, and let
$\langle x\rangle = x -\lfloor x\rfloor \in [0,1)$.

\begin{lemma} \label{lm:he}
Suppose that $e\in E(\Ga)$. Let $d=d_e\in \bZ_{>0}$, and let $\ep=\vf(e)\in E(\Up)_c$.
Define $\si, \si', \ep_i, \ep_i', a_i$ as in Section \ref{GKM-graph}.
Suppose that $(e,v), (e,v')\in F(\Ga)$, $\vf(v)= \si$, $\vf(v')=\si'$. 
Then any element in the conjugacy class $c_{(e,v)}\in \Conj(G_\si)$ acts on $T_{\fp_\si}\fl_\ep$ by multiplication
by $e^{2\pi\sqrt{-1}\langle d/r_{(\ep,\si)}\rangle }$, and acts on $T_{\fp_\si}\fl_{\ep_i}$ by 
$e^{2\pi\sqrt{-1} \alpha_i }$, where
$$
\langle \frac{d}{r_{(\ep,\si)}}\rangle , \alpha_1,\ldots, \alpha_{r-1} 
\in \Big\{ 0, \frac{1}{r_{(e,v)} },\ldots, \frac{r_{(e,v)}-1 }{r_{(e,v)}} \Big\}.  
$$
Define 
$$
\uu=r_{(\ep,\si)} \bw_{(\ep,\si)} = -r_{(\ep,\si')} \bw_{(\ep,\si')}.
$$
Then
\begin{equation}\label{eqn:he}
\bh(e)= \frac{ \displaystyle{ (\frac{d}{\uu})^{\lfloor\frac{d}{r_{(\ep,\si)} }\rfloor} } }
{\lfloor\frac{d}{r_{(\ep,\si)} }\rfloor !}
\frac{\displaystyle{  (-\frac{d}{\uu})^{\lfloor\frac{d}{r_{(\ep,\si')} }\rfloor } }  }
{\lfloor\frac{d}{r_{(\ep,\si')} }\rfloor !}\prod_{i=1}^{r-1} \mathbf{b}_i 
\end{equation}
where
\begin{equation}\label{eqn:b-orb}
\mathbf{b}_i=\begin{cases}
\displaystyle{\prod_{j=0}^{\lfloor da_i-\alpha_i \rfloor} (\bw_{(\ep_i,\si)} -(j+\alpha_i) \frac{\uu}{d})^{-1} }, 
& a_i\geq 0,\\
\displaystyle{\prod_{j=1}^{\lceil \alpha_i -da_i -1 \rceil } (\bw_{(\ep_i,\si)}+ (j-\alpha_i) \frac{\uu}{d}) },  
& a_i <0.
\end{cases}
\end{equation}
\end{lemma}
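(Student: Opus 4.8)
The plan is to compute $\bh(e)$ by splitting $f_e^*T\cX$ into $T$-equivariant line bundles and reducing to the cohomology of line bundles on the football $\cC_e$, then to collect the resulting factors. Restricting the splitting of Section \ref{sec:one-dim} along $\fl_\ep$ gives a $T$-equivariant decomposition $T\cX|_{\fl_\ep}=T\fl_\ep\oplus L_1\oplus\cdots\oplus L_{r-1}$, where $L_i$ has $\langle c_1(L_i),[\fl_\ep^\rig]\rangle=a_i$ and $T$-weights $\bw_{(\ep_i,\si)}$ and $\bw_{(\ep_i',\si')}=\bw_{(\ep_i,\si)}-a_i\overline{\bw}_{(\ep,\si)}$ at the two torus fixed points, in the notation of Section \ref{GKM-graph}. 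Pulling back along the representable degree-$d$ map $f_e$ and using multiplicativity of $H^\bullet$ and of equivariant Euler classes over direct sums, $\bh(e)$ becomes the product over the summands $\cL$ of the ratios $e^T\bigl(H^1(\cC_e,f_e^*\cL)^m\bigr)/e^T\bigl(H^0(\cC_e,f_e^*\cL)^m\bigr)$. Two structural facts I would record first: by Theorem \ref{thm:Fvir} we have $T^{2,f}=0$, and the GKM hypothesis makes every weight appearing below nonzero, so each $H^1$ above is entirely moving; and the map $B_1\to B_2$ recorded just before the Lemma identifies $H^0(\cC_e,f_e^*T\fl_\ep)^f$ with infinitesimal automorphisms along $\cC_e$, so only $H^0(\cC_e,f_e^*T\fl_\ep)^m$ survives in $\bh(e)$ and the summands $f_e^*L_i$ contribute through $H^0$ or $H^1$ only.

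Next I would make $\cC_e$ and $f_e$ explicit. By Section \ref{sec:fixed}, $\cC_e\cong\cF(r_u,r_v)$ and $\bar f_e\colon\bP^1\to\ell_\ep$ is $[x,y]\mapsto[x^d,y^d]$; concretely $\cC_e$ has a global quotient presentation $[(\bC^2-\{0\})/\widetilde E]$ and $f_e$ lifts, on the level of the presenting $\bC^2-\{0\}$'s, to $(x,y)\mapsto(x^d,y^d)$ up to the torus and group actions, covering the presentation $\fl_\ep=[(\bC^2-\{0\})/E_\ep]$ of Section \ref{sec:one-dim}. For a line-bundle summand $\cL=[((\bC^2-\{0\})\times\bC)/E_\ep]$ one obtains a monomial basis for the sections and for the {\v C}ech $1$-cochains of $f_e^*\cL$ on the standard cover $\{x\neq0\}\cup\{y\neq0\}$, and $H^0,H^1$ are read off from the resulting two-term {\v C}ech complex, exactly as in the toric-stack computation of \cite{Liu}. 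Each surviving monomial carries a $T$-weight obtained from the weight at the $\si$-end by an arithmetic progression of common difference $\uu/d$, where $\uu=\overline{\bw}_{(\ep,\si)}=-\overline{\bw}_{(\ep,\si')}$; equivalently a progression of common difference $-\uu/d$ from the $\si'$-end. Since $\cC_e$ has genus-$0$ coarse space, the orbifold Riemann--Roch theorem \eqref{eqn:twistedRR} applied to $\cC_e$ shows that for each summand at most one of $H^0,H^1$ is nonzero, the dichotomy being governed by the sign of the fractional degree.

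To pin down the fractional data in the statement I would analyze the monodromy: the node $\fy_{(e,v)}$ has automorphism group generated by $c_{(e,v)}=\pi_{(\ep,\si)}(\gamma_e)\in G_\si$, and chasing the commutative diagram of Section \ref{sec:localization} relating $H_\ep\to\bZ$ with $\bZ\to\mu_{r_{(\ep,\si)}}$, together with $\phi_{(\ep,\si)}=\rho_y|_{G_\si}$, $\phi_{(\ep_i,\si)}=\rho_i|_{G_\si}$ and $\rho_i\circ i_\ep(t)=t^{d_i}$, shows that $c_{(e,v)}$ acts on $T_{\fp_\si}\fl_\ep$ by $e^{2\pi\sqrt{-1}\langle d/r_{(\ep,\si)}\rangle}$ and on $T_{\fp_\si}\fl_{\ep_i}$ by $e^{2\pi\sqrt{-1}\alpha_i}$; representability of $f_e$ forces the order of $c_{(e,v)}$ in $G_\si$ to be $r_{(e,v)}$, hence $\langle d/r_{(\ep,\si)}\rangle,\alpha_1,\dots,\alpha_{r-1}\in\{0,1/r_{(e,v)},\dots,(r_{(e,v)}-1)/r_{(e,v)}\}$. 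These fractional parts fix where each arithmetic progression begins. Assembling: for $f_e^*T\fl_\ep$ the degree $d\,a_r=d(1/r_{(\ep,\si)}+1/r_{(\ep,\si')})$ is positive, so $H^1=0$ and the moving $H^0$-weights are $\{j\uu/d:1\le j\le\lfloor d/r_{(\ep,\si)}\rfloor\}\cup\{-j\uu/d:1\le j\le\lfloor d/r_{(\ep,\si')}\rfloor\}$, whose reciprocal product is the first two factors of \eqref{eqn:he}; for $f_e^*L_i$ with $a_i\geq0$ one gets $H^1=0$ and $H^0$-weights $\bw_{(\ep_i,\si)}-(j+\alpha_i)\uu/d$ for $0\le j\le\lfloor d a_i-\alpha_i\rfloor$, so $\mathbf{b}_i=1/e^T(H^0)$ (an empty product exactly when $H^0=H^1=0$); for $a_i<0$ one gets $H^0=0$ and $H^1$-weights $\bw_{(\ep_i,\si)}+(j-\alpha_i)\uu/d$ for $1\le j\le\lceil\alpha_i-d a_i-1\rceil$, so $\mathbf{b}_i=e^T(H^1)$. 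This yields \eqref{eqn:he}--\eqref{eqn:b-orb}, and setting $r_{(\ep,\si)}=r_{(\ep,\si')}=1$ recovers the classical edge factor, consistently with Lemma \ref{lm:hv} and the stable/unstable conventions of Section \ref{sec:map}.

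The main obstacle is the orbifold bookkeeping in the last two steps: translating the single group element $c_{(e,v)}$ into the fractional parts $\langle d/r_{(\ep,\si)}\rangle$ and the $\alpha_i$ (which are not independent, since they come from one element acting on several line bundles), and then determining the precise endpoints of the monomial ranges --- the floors and ceilings in \eqref{eqn:b-orb} --- by deciding exactly which $\widetilde E$-weighted monomials survive in $H^0$ and which in $H^1$ on the football. A secondary but real point is keeping the fixed/moving splitting straight so that the piece of $H^0(\cC_e,f_e^*T\fl_\ep)$ already accounted for as automorphisms of the domain is not double counted; this is precisely where $T^{2,f}=0$ from Theorem \ref{thm:Fvir} is invoked.
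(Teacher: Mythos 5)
Your proposal is correct and follows essentially the same route as the paper: split $f_e^*T\cX$ into $f_e^*T\fl_\ep$ and the normal summands $f_e^*L_i$, record the $T$-weights at the two fixed points, read off the weight decomposition of $H^0$ and $H^1$ on the football from the quotient presentation (the paper simply cites \cite[Example 8.5]{Liu} for exactly this computation, which is what your \v{C}ech argument reproduces), and take Euler classes of moving parts, your monodromy chase pinning down $\langle d/r_{(\ep,\si)}\rangle$ and the $\alpha_i$ being the derivation the paper leaves implicit in the statement. One cosmetic point: invoking Theorem \ref{thm:Fvir} for the fixed/moving bookkeeping is unnecessary (its proof is only completed after these edge and vertex computations), and, as you also note, the GKM hypothesis alone guarantees that all the listed weights are nonzero, so the lemma needs nothing beyond that.
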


\begin{proof}
Let
$$
\bw_i =\bw_{(\ep_i,\si)},\quad i=1,\ldots,r-1.
$$
We have
$$
N_{\fl_\ep/\cX}=L_1\oplus \cdots \oplus L_{r-1}.
$$
\begin{itemize} 
\item The weights of $T$-actions on $(L_i)_{\fp_\si}$
and $(L_i)_{\fp_{\si'}}$ are 
$\bw_i$ and $\bw_i- a_i \uu $,
respectively. 
\item The weights of $T$-action on $T_{\fp_\si}\fl_\ep$
and $T_{\fp_{\si'}}\fl_\ep$ are 
$\displaystyle{ \frac{\uu}{r_{(\ep,\si)}} }$ and 
$\displaystyle{ \frac{-\uu}{r_{(\ep,\si')}} }$, respectively.
\item  Let $\fp_v =f_e^{-1}(\fp_\si), \fp_{v'}=f_e^{-1}(\fp_{\si'})$
be the two torus fixed points in $\cC_e$. Then
the weights of $T$-action on $T_{\fp_v}\cC_e$ and
$T_{\fp_{v'}}\cC_e$ are
$\displaystyle{ \frac{\uu}{d r_{(e,v)} } }$ and 
$\displaystyle{ \frac{-\uu}{d r_{(e,v')} } }$, respectively.
\end{itemize}
By \cite[Example 8.5]{Liu},  
$$
\ch_T(H^1(\cC_e, f_e^*L_i)-H^0(\cC_e, f_e^*L_i))
= \begin{cases}
- \displaystyle{ \sum_{j=0}^{ \lfloor da_i-\alpha_i\rfloor } e^{\bw_i - (j+\alpha_i) \frac{\uu}{d} } }, & a_i\geq 0,\\
\displaystyle{ \sum_{j=1}^{ \lceil \alpha_i -da_i-1\rceil } e^{\bw_i+(j-\alpha_i) \frac{\uu}{d} }  }, & a_i<0.
\end{cases}
$$
Note that $\bw_i - (j+\alpha_i) \uu$ and $\bw_i + (j-\alpha_i)\uu$  are nonzero for any $j\in \bZ$ since
$\bw_i$ and $\uu$ are linearly independent for $i=1,\ldots,r-1$. So
$$
\frac{e^T\left(H^1(\cC_e,f_e^*L_i)^m\right)}{e^T\left(H^0(\cC_e,f_e^*L_i)^m\right)}
=\frac{e^T\left(H^1(\cC_e,f_e^*L_i)\right)}{e^T\left(H^0(\cC_e,f_e^* L_i)\right)} =\mathbf{b}_i
$$
where $\mathbf{b}_i$ is defined by \eqref{eqn:b-orb}.

By \cite[Example 8.5]{Liu} again, 
\begin{eqnarray*}
&& \ch_T(H^1(\cC_e, f_e^*T\fl_\ep)-H^0(\cC_e, f_e^*T\fl_\ep))\\
&=& \sum_{\substack{j \in \bZ\\  -\langle \frac{d}{r_{(\ep,\si)}} \rangle \leq j \leq \frac{d}{r_{(\ep,\si)}}+ \frac{d}{r(\ep,\si')} - \langle \frac{d}{r_{(\ep,\si)}} \rangle }}
e^{\frac{\uu}{r_{(\ep,\si)}}- (j+ \langle \frac{d}{r_{(\ep,\si)}}\rangle)\frac{\uu}{d} } \\
&=&  = 1+\sum_{j=1}^{\lfloor \frac{d}{r_{(\ep,\si)}} \rfloor} e^{j\frac{\uu}{d}}
+ \sum_{j=1}^{\lfloor \frac{d}{r_{(\ep,\si')}}\rfloor} e^{-j\frac{\uu}{d}}.
\end{eqnarray*}
So
$$
\frac{e^T(H^1(\cC_e,f_e^*T\fl_\ep)^m)}{e^T(H^0(\cC_e,f_e^*T\fl_\ep)^m)}
= \prod_{j=1}^{\lfloor \frac{d}{r_{(\ep,\si)}}\rfloor}\frac{1}{j\frac{\uu}{d} }\prod_{j=1}^{\lfloor \frac{d}{r(\ep,\si')} \rfloor} \frac{1}{-j\frac{\uu}{d}}
= \frac{ \displaystyle{ \big(\frac{d}{\uu}\big)^{\lfloor\frac{d}{r_{(\ep,\si)}}\rfloor } } }{\lfloor\frac{d}{r_{(\ep,\si)}}\rfloor !}
\frac{ \displaystyle{ \big(-\frac{d}{\uu}\big)^{\lfloor\frac{d}{r(\ep,\si')}\rfloor} } }
{\lfloor\frac{d}{r(\ep,\si')}\rfloor !} 
$$
Therefore,
\begin{eqnarray*}
 \frac{e^T(H^1(\cC_e, f_e^* T\cX )^m )}
{e^T (H^0(\cC_e, f_e^*T\cX)^m) }
&=&\frac{e^T(H^1(\cC_e ,f_e^*T\fl_\ep)^m)}{e^T(H^0(\cC_e,f_e^*T\fl_\ep)^m)}
\cdot\prod_{i=1}^{r-1}\frac{e^T(H^1(\cC_e,f_e^*\cL_i)^m)}{e^T(H^0(\cC_e,f_e^*\cL_i)^m)}\\
&=&  \frac{\displaystyle{ \big(\frac{d}{\uu}\big)^{\lfloor\frac{d}{r_{(\ep,\si)}}\rfloor } } }
{\lfloor\frac{d}{r_{(\ep,\si)}}\rfloor !}
\frac{ \displaystyle{\big(-\frac{d}{\uu}\big)^{\lfloor\frac{d}{r_{(\ep,\si')} }\rfloor} } }
{\lfloor\frac{d}{r_{(\ep,\si')} }\rfloor !} \prod_{i=1}^{r-1} \mathbf{b}_i
\end{eqnarray*}
\end{proof}

From the above derivation, we conclude that
\begin{equation}\label{eqn:map}
\frac{e^T(B_5^m)}{e^T(B_2^m)} = \prod_{\substack{ v\in V^{0,2}(\vGa)\\ E_v= \{e,e'\} } } \bh(e,v) 
\cdot \prod_{(e,v)\in F^S(\vGa)} \bh(e,v) 
\cdot \prod_{v\in V^S(\vGa)}\bh(v)\cdot\prod_{e\in E(\vGa)}\bh(e)
\end{equation}
where $\bh(e,v)$, $\bh(v)$, and $\bh(e)$ are defined by \eqref{eqn:hev}, \eqref{eqn:hv}, \eqref{eqn:he}, respectively.
To unify the stable and unstable vertices, we define
$$
\bh(v):=\begin{cases}
\displaystyle{ \frac{1}{\bh(e,v)} }, & v\in V^{0,1}(\vGa)\cup V^{1,1}(\Ga), \quad E_v=\{e\},\\
\displaystyle{ \frac{1}{\bh(e,v)} =\frac{1}{\bh(e',v)} }, & v\in V^{0,2}(\vGa),\quad  E_v=\{e,e'\}.
\end{cases}
$$
In the above notation, \eqref{eqn:map} can be written as follows.
\begin{proposition}\label{B5B2}
$$
\frac{e^T(B_5^m)}{e^T(B_2^m)} 
= \prod_{v\in V(\Ga)} \bh(v)\cdot \prod_{(e,v)\in F(\Ga)} \bh(e,v) \cdot \prod_{e\in E(\Ga)}\bh(e).
$$
\end{proposition}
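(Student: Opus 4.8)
The proposition is just the identity \eqref{eqn:map} rewritten in the unified notation for stable and unstable vertices, so the plan is to (i) recall how \eqref{eqn:map} itself is obtained, making visible where each factor comes from, and then (ii) carry out the combinatorial bookkeeping that collapses the node and unstable-vertex contributions into $\prod_{v\in V(\Ga)}\bh(v)\prod_{(e,v)\in F(\Ga)}\bh(e,v)$.

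For step (i) I would twist the normalization sequence \eqref{eqn:normalize-orb} by $f^*T\cX$ and pass to the associated long exact cohomology sequence, written as $0\to A_0\to A_1\to A_2\to A_3\to A_4\to 0$ with $A_0=B_2$, $A_3=B_5$, $A_1=\bigoplus_{v\in V^S(\vGa)}H^0(\cC_v,f_v^*T\cX)\oplus\bigoplus_{e\in E(\Ga)}H^0(\cC_e,f_e^*T\cX)$, $A_4$ the same with $H^1$ in place of $H^0$, and $A_2$ the direct sum of the node spaces $(T_{\fp_{\si_v}}\cX)^{c_{(e,v)}}$ indexed by the stable flags $(e,v)\in F^S(\vGa)$ together with those indexed by the vertices $v\in V^{0,2}(\vGa)$. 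Passing to moving parts is exact (it is the projection onto the sum of nontrivial weight spaces), so the sequence of moving parts is again exact, and each moving piece, having no trivial weight space, has invertible equivariant Euler class after localization. Multiplicativity of $e^T$ along this exact sequence gives
$$
\frac{e^T(B_5^m)}{e^T(B_2^m)}=\frac{e^T(A_2^m)\,e^T(A_4^m)}{e^T(A_1^m)}.
$$
Since $\fp_{\si_v}$ is an isolated $T$-fixed point whose tangent weights $\bw_{(\ep,\si_v)}$ are all nonzero, each node space is entirely moving and has equivariant Euler class $\bh(e,v)$ by \eqref{eqn:hev}, the two flags at a $V^{0,2}(\vGa)$ vertex giving equal factors because $(T_{\fp_\si}\cX)^c=(T_{\fp_\si}\cX)^{\bar c}$; and, by the definitions of $\bh(v)$ for stable $v$ and of $\bh(e)$ as ratios of moving-part Euler classes of $H^1$ over $H^0$ on $\cC_v$ and $\cC_e$, one gets $e^T(A_4^m)/e^T(A_1^m)=\prod_{v\in V^S(\vGa)}\bh(v)\cdot\prod_{e\in E(\Ga)}\bh(e)$, with $\bh(v)$ evaluated by Lemma \ref{lm:hv} and $\bh(e)$ by Lemma \ref{lm:he}. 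This recovers \eqref{eqn:map}.

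For step (ii) I would use the definition of $\bh(v)$ at unstable vertices: $\bh(v)=1/\bh(e,v)$ when $v\in V^{0,1}(\vGa)\cup V^{1,1}(\vGa)$ carries the single edge $e$, and $\bh(v)=1/\bh(e,v)=1/\bh(e',v)$ when $v\in V^{0,2}(\vGa)$ carries the edges $e,e'$. Then at each unstable vertex the extra factor $\bh(v)$ cancels exactly one flag factor $\bh(e,v)$ (nothing survives at a $V^{0,1}(\vGa)\cup V^{1,1}(\vGa)$ vertex, while a single $\bh(e,v)$ survives at a $V^{0,2}(\vGa)$ vertex), whence
$$
\prod_{v\in V(\Ga)}\bh(v)\;\prod_{(e,v)\in F(\Ga)}\bh(e,v)=\prod_{v\in V^S(\vGa)}\bh(v)\;\prod_{(e,v)\in F^S(\vGa)}\bh(e,v)\;\prod_{\substack{v\in V^{0,2}(\vGa)\\E_v=\{e,e'\}}}\bh(e,v).
$$
Multiplying both sides by $\prod_{e\in E(\Ga)}\bh(e)$ turns the right-hand side into \eqref{eqn:map}, which is the desired identity.

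The Euler-class multiplicativity and the two preceding lemmas are routine inputs; the step I expect to be the main obstacle to phrase cleanly is (ii) — verifying that the conventions \eqref{eqn:one-orb}--\eqref{eqn:one-one-orb} and the resulting extension of $\bh(v)$ to $V^{0,1}(\vGa)\cup V^{1,1}(\vGa)\cup V^{0,2}(\vGa)$ are calibrated precisely so that the node and unstable-vertex contributions close up into $\prod_{v\in V(\Ga)}\bh(v)\prod_{(e,v)\in F(\Ga)}\bh(e,v)$, exactly in parallel with the companion reconciliation carried out for Proposition \ref{B1B4}.
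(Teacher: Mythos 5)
Your argument is correct and is essentially the paper's own derivation: twist the normalization sequence \eqref{eqn:normalize-orb} by $f^*T\cX$, take moving parts of the resulting long exact sequence to get $e^T(B_5^m)/e^T(B_2^m)$ as the product of the node Euler classes $\bh(e,v)$ with $\prod_{v\in V^S(\vGa)}\bh(v)\prod_{e\in E(\Ga)}\bh(e)$ (evaluated by Lemmas \ref{lm:hv} and \ref{lm:he}), which is \eqref{eqn:map}, and then absorb the unstable vertices via the convention $\bh(v)=1/\bh(e,v)$, using $(T_{\fp_\si}\cX)^{c}=(T_{\fp_\si}\cX)^{\bar c}$ at $V^{0,2}(\vGa)$ vertices. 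The only minor remark is that the integral conventions \eqref{eqn:one-orb}--\eqref{eqn:one-one-orb} you worry about are not needed for this proposition itself --- it is pure bookkeeping once $\bh(v)$ is defined at unstable vertices --- they only enter when this formula is combined with Proposition \ref{B1B4} in Theorem \ref{main-orb}.
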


\subsection{Contribution from each graph} \label{sec:each-graph}
\subsubsection{Virtual tangent bundle} 
We have $B_1^f=B_2^f$, $B_5^f=0$. So
$$
T^{1,f}=B_4^f =\bigoplus_{v\in V^S(\Ga)}T_{[(\cC_v,\bx_v)]}\Mbar_{g_v, n_v},\quad
T^{2,f}=0.
$$
This completes the proof of Theorem \ref{thm:Fvir}.

\subsubsection{Virtual normal bundle} 
Equation \eqref{eqn:euler}, Proposition \ref{B1B4}, and Proposition \ref{B5B2} imply
\begin{equation}\label{eqn:euler-formula}
\frac{1}{e_T(N^\vir_\vGa)} = \prod_{v\in V(\Ga)}\frac{\bh(v)}{\prod_{e\in E_v}\Big(w_{(e,v)}-\frac{\bar{\psi}_{(e,v)}}{r_{(e,v)}}\Bigr)} 
\prod_{(e,v)\in F(\Ga)}\bh(e,v) \cdot \prod_{e\in E(\Ga)}\bh(e).
\end{equation}

\subsubsection{Integrand} 
Given $\si\in V(\Up)$, let $i_\si^*:\cH_{\Up} \to H^*_{\CR,T}(\fp_\si;Q_T)$ be the composition 
$$
\cH_{\Up}=\bigoplus_{\si\in V(\Up)} H^*_{\CR,T}(\cX_\si;Q_T) \to H^*_{CR,T}(\cX_\si;Q_T) \stackrel{i_\si^*}{\to} H^*_{\CR,T}(\fp_\si;Q_T)
$$
where the first arrow is projection to a direct summand, and 
the second arrow is induced by the inclusion $i_\si:\fp_\si \to \cX_\si$.  
Given $\vGa\in G_{g,\vi}(\Up,\hbeta)$, let
$$
i_\vGa^* :A^*_T(\MgXi)\to A^*_T(\cF_\vGa) \cong A^*_T(\cM_\vGa)
$$
be induced by the inclusion $i_\vGa:\cF_\vGa \to \Mbar_{g,\vi}(\hXY,\hbeta)$. Then
\begin{equation}\label{eqn:integrand}
\begin{aligned}
&i_\vGa^* \prod_{j=1}^n\left(\ev_j^*\gamma_j^T \cup (\bar{\psi}_j^T)^{a_j}\right) \\
=&\prod_{\tiny \begin{array}{c} v\in V^{1,1}(\vGa)\\ S_v=\{j\}, E_v=\{e\}\end{array}} i^*_{\si_v}\gamma_j^T (-w_{(e,v)})^{a_j} 
\cdot \prod_{v\in V^S(\Ga)}\Bigl(\prod_{j\in S_v} i^*_{\sigma_v}\gamma_j^T\prod_{e\in E_v}\bar{\psi}_{(e,v)}^{a_j}\Bigr)
\end{aligned}
\end{equation}
To unify the stable vertices in $V^S(\vGa)$ and the unstable vertices in $V^{1,1}(\vGa)$ , 
we use the following convention: for $a\in \bZ_{\geq 0}$ and $h\in G$, we define
\begin{equation}\label{eqn:one-one-a-orb}
\int_{\Mbar_{0,([h],[h^{-1}])}(\cB G)}\frac{\bar{\psi}_2^a}{w_1-\bar{\psi}_1}= \frac{(-w_1)^a}{|C_G(h)|}.
\end{equation}
In particular, \eqref{eqn:one-one-orb} is obtained by setting $a=0$. With the 
convention \eqref{eqn:one-one-a-orb}, we may rewrite \eqref{eqn:integrand} as
\begin{equation}
i_\vGa^*\prod_{j=1}^n\left(\ev_j^*\gamma_j^T \cup (\bar{\psi}_j^T)^{a_j}\right)=
\prod_{v\in V(\Ga)}\Bigl(\prod_{j\in S_v} i^*_{\sigma_v}\gamma_j^T\prod_{e\in E_v}\bar{\psi}_{(e,v)}^{a_j}\Bigr).
\end{equation}

The following lemma shows that the convention \eqref{eqn:one-one-a-orb} is consistent with
the stable case $\Mbar_{0,(c_1,\dots,c_n)}(\cB G)$, $n\geq 3$.
\begin{lemma}  Let $n,a$ be integers, $n\geq 2$, $a\geq 0$. Let $\vc=(c_1,\ldots, c_n)\in \Conj(G)^n$. Then
$$
\int_{\Mbar_{0,\vc}(\cB G)} \frac{\bar{\psi}_2^a}{w_1-\bar{\psi}_1}=
\begin{cases} \displaystyle{ \frac{\prod_{i=0}^{a-1}(n-3-i)}{a!}w_1^{a+2-n}\frac{|V^{G}_{0,\vec{c}}|}{|G|} }, & n=2 \textup{ or } 0\leq a\leq n-3.\\
0, & \textup{ otherwise.}
\end{cases} 
$$
\end{lemma}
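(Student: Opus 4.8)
The plan is to expand the denominator as a geometric series in $\bar{\psi}_1$ and reduce to standard intersection numbers on $\Mbar_{0,n}$, handling the genuinely geometric range $n\geq 3$ and the ``unstable'' value $n=2$ separately.

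For $n\geq 3$ (if $V^G_{0,\vc}=\emptyset$ then $\Mbar_{0,\vc}(\cB G)$ is empty and both sides vanish, so assume it is nonempty), first I would write
$$
\frac{\bar{\psi}_2^a}{w_1-\bar{\psi}_1}=\sum_{k\geq 0}\frac{\bar{\psi}_1^k\bar{\psi}_2^a}{w_1^{k+1}},
$$
which is a finite sum because $\bar{\psi}_1$ is nilpotent, and integrate term by term over $\Mbar_{0,\vc}(\cB G)$. Since this stack has dimension $n-3$, the only term of the correct degree is $k=n-3-a$; when $a>n-3$ there is no such nonnegative $k$, so the integral vanishes, giving the ``otherwise'' case. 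When $0\leq a\leq n-3$, only
$$
\frac{1}{w_1^{\,n-2-a}}\int_{\Mbar_{0,\vc}(\cB G)}\bar{\psi}_1^{\,n-3-a}\bar{\psi}_2^a
$$
survives. Applying Theorem \ref{thm:orb-psi} rewrites this integral as $\tfrac{|V^G_{0,\vc}|}{|G|}\int_{\Mbar_{0,n}}\psi_1^{\,n-3-a}\psi_2^a$, and then the formula $\int_{\Mbar_{0,n}}\psi_1^{a_1}\cdots\psi_n^{a_n}=\tfrac{(n-3)!}{a_1!\cdots a_n!}$ quoted above, with $(a_1,a_2,a_3,\ldots,a_n)=(n-3-a,a,0,\ldots,0)$, gives $\tfrac{(n-3)!}{(n-3-a)!\,a!}$. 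Rewriting $\tfrac{(n-3)!}{(n-3-a)!}=\prod_{i=0}^{a-1}(n-3-i)$ and $w_1^{-(n-2-a)}=w_1^{a+2-n}$ then yields exactly the claimed expression. Observe also that for $n\geq 3$ and $a\geq n-2$ the first formula produces $0$ anyway, since then the factor $i=n-3$ appears in the product, so the two cases agree at the boundary.

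For $n=2$ the left-hand side is \emph{defined} by the convention \eqref{eqn:one-one-a-orb}, so $\vc=([h],[h^{-1}])$ for some $h\in G$ and the integral equals $(-w_1)^a/|C_G(h)|$. To match the right-hand side I would note that $V^G_{0,([h],[h^{-1}])}=\{(e_1,e_2)\in G^2:e_1e_2=1,\ e_1\in[h]\}$ is in bijection with the conjugacy class $[h]$, so $|V^G_{0,([h],[h^{-1}])}|/|G|=1/|C_G(h)|$, and $\prod_{i=0}^{a-1}(2-3-i)=\prod_{i=0}^{a-1}\bigl(-(i+1)\bigr)=(-1)^a a!$, hence $\tfrac{\prod_{i=0}^{a-1}(n-3-i)}{a!}\,w_1^{a+2-n}=(-1)^a w_1^a=(-w_1)^a$, which completes the check.

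Nothing here is delicate; the only points requiring care are the uniform rewriting of the falling factorial $(n-3)!/(n-3-a)!$ as the product $\prod_{i=0}^{a-1}(n-3-i)$ (valid for all $n$, including the negative and vanishing regimes) and the verification that the $n=2$ convention \eqref{eqn:one-one-a-orb} is exactly the specialization of the general formula, which comes down to the sign identity above together with $|V^G_{0,([h],[h^{-1}])}|=|[h]|$.
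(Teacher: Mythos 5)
Your argument is correct and follows essentially the same route as the paper: expand $(w_1-\bar{\psi}_1)^{-1}$ as a geometric series, isolate the degree-$(n-3)$ term, apply Theorem \ref{thm:orb-psi} together with the standard formula $\int_{\Mbar_{0,n}}\psi_1^{a_1}\cdots\psi_n^{a_n}=\frac{(n-3)!}{a_1!\cdots a_n!}$, and invoke the convention \eqref{eqn:one-one-a-orb} for $n=2$. Your explicit verification that the $n=2$ convention matches the general formula (via $|V^G_{0,([h],[h^{-1}])}|/|G|=1/|C_G(h)|$ and the sign identity) is a detail the paper leaves implicit, but it is the same proof.
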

\begin{proof} The case $n=2$ follows from \eqref{eqn:one-one-a-orb}.
For $n\geq 3$, 
\begin{eqnarray*}
&& \int_{\Mbar_{0,\vc}(\cB G)}\frac{\psi_2^a}{w_1-\bar{\psi}_1}
=\frac{1}{w_1} \int_{\Mbar_{0,\vc}(\cB G)}\frac{\bar{\psi}_2^a}{1-\frac{\bar{\psi}_1}{w_1}}
=w_1^{a+2-n} \int_{\Mbar_{0,\vc}(\cB G)} \bar{\psi}_1^{n-3-a}\bar{\psi}_2^a\\
&& = w_1^{a+2-n} |V^{G}_{0,\vc}|\cdot \frac{1}{|G|}\cdot \frac{(n-3)!}{(n-3-a)! a_!}
=\frac{\prod_{i=0}^{a-1}(n-3-i)}{a!} w_1^{a+2-n}\frac{|V^{G}_{0,\vec{c}}|}{|G|}.
\end{eqnarray*}
\end{proof}

\subsubsection{The integral} \label{sec:integral}
Define
\begin{equation} \label{eqn:I}
I_{\vGa} := \int_{[\cF_{\vGa}]^\vir} \frac{i_{\vGa}^*\prod_{j=1}^n (\ev_j^*\hga_j^T\cup (\bar{\psi}_j^T)^{a_i})}{e^T(N_\vGa^\vir)}.
\end{equation}
By Theorem \ref{thm:Fvir}, Equation \eqref{eqn:euler-formula},  and Equation \eqref{eqn:integrand}, 
\begin{equation}
\begin{aligned}
I_{\vGa} = & \ c_\vGa \prod_{e\in E(\Ga)}\bh(e)\prod_{(e,v)\in F(\Ga)} \bh(e,v) \\
&\  \cdot
\prod_{v\in V(\Ga)}\int_{[\Mbar_{g_v,{\vc}_v}(\cB G_v)]}
\frac{\bh(v)\cdot \prod_{j\in S_v}\big(i^*_{\si_v}\hga_j^T \cup \bar{\psi}^{a_j}_j\big) }
{\prod_{e\in E_v}\Big(w_{(e,v)}-\frac{\bar{\psi}_{(e,v)}}{r_{(e,v)}}\Big)}. 
\end{aligned}
\end{equation}
(Recall that  $c_\vGa\in \bQ$ is defined by Equation \eqref{eqn:unified-c}.) 

\subsection{Sum over graphs} Let
$$
i_T^* :H^*_T(\Mbar_{g,\vi}(\hXY,\hbeta))\to H^*_T(\Mbar_{g,\vi}(\hXY,\hbeta)^T) 
$$
be induced by the inclusion $i_T:\Mbar_{g,\vi}(\hXY,\hbeta)^T\to \Mbar_{g,\vi}(\hXY,\hbeta)$. 
Then 
$$
\int_{[\Mbar_{g,\vi}(\hXY,\hbeta)^T]^{\vir,T}} \frac{i_T^*\prod_{j=1}^n (\ev_j^*\hga_j^T\cup (\bar{\psi}_j^T)^{a_j})}{e^T(N^\vir)}
= \sum_{\vGa\in G_{g,n}(\vUp,\hbeta)} I_{\vGa},
$$
where the contribution  $I_{\vGa}$ from the decorated graph $\vGa$ is given in Section \ref{sec:integral} above. We obtain:
\begin{theorem}\label{main-orb}
\begin{equation}\label{eqn:sum-orb}
\begin{aligned}
\langle \bar{\ep}_{a_1}(\hga_1^T)\cdots \bar{\ep}_{a_n}(\hga_n^T)\rangle^{\Up}_{g,\hbeta}
=& \sum_{\vGa\in G_{g,\vi}(\cX,\beta)} c_\vGa
\prod_{e\in E(\Ga)} \bh(e) \prod_{(e,v)\in F(\Ga)} \bh(e,v) \\
&\cdot \prod_{v\in V(\Ga)}
\int_{[\Mbar_{g,\vi_v} (\cB G_v)]^w }
\frac{\bh(v)\prod_{j\in S_v}\big(i_{\si_v}^* \hga_j^T\bar{\psi}_j^{a_j}\big)}
{\prod_{e \in E_v} \Big(w_{(e,v)}-\frac{\bar{\psi}_{(e,v)}}{r_{(e,v)}} \Big)}
\end{aligned}
\end{equation}
where $\bh(e)$, $\bh(e,v)$, $\bh(v)$ are given by
\eqref{eqn:he}, \eqref{eqn:hev}, \eqref{eqn:hv}, respectively, and we have
the following convention for the $v\notin V^S(\Ga)$:
\begin{eqnarray*}
&&\int_{\Mbar_{0,(\{1\})}(\cB G_v)}\frac{1}{w_1-\bar{\psi}_1}= \frac{w_1}{|G|},\\
&& \int_{\Mbar_{0,([h],[h^{-1}])}(\cB G_v)}\frac{1}{(w_1-\bar{\psi}_1)(w_2-\bar{\psi}_2)}=\frac{1}{|C_{G_v}(h)|\cdot(w_1+w_2)},\\
&& \int_{\Mbar_{0,([h],[h^{-1}]}(\cB G_v)}\frac{\bar{\psi}_2^a}{w_1-\bar{\psi}_1}=\frac{(-w_1)^a}{|C_{G_v}(h)|},\quad a\in \bZ_{\geq 0}
\end{eqnarray*}
where $h\in G_v$, $[h]\in \Conj(G_v)$, and $C_{G_v}(h)=\{ a\in G_v: aha^{-1}=h\}$ is the centralizer of $h$ in $G_v$.
\end{theorem}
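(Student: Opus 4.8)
The plan is to apply $T$-equivariant virtual localization to $\MghXi$ and then regroup the resulting sum over connected components of the fixed locus according to the decorated graph recording the combinatorial type of a $T$-fixed twisted stable map.

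First I would invoke the virtual localization theorem \cite{GrPa, Be2} for the $T$-equivariant perfect obstruction theory on $\MghXi$. The delicate point is that $\MghXi$ need not be proper, nor even of finite type, when $\hXY$ is only a formal smooth GKM stack; however, by the analysis of Section \ref{sec:graph-notation-orb} the fixed substack $\MghXi^T=\bigsqcup_{\vGa\in G_{g,\vi}(\vUp,\hbeta)}\cF_{\vGa}$ is a proper DM stack of finite type, which is precisely what makes the definition \eqref{eqn:formal-GW}, equivalently the residue expression \eqref{eqn:residue-orb}, legitimate. Granting this, the left-hand side of \eqref{eqn:sum-orb} equals $\int_{[\MghXi^T]^w}\frac{i_T^*\prod_{j=1}^n(\ev_j^*\hga_j^T\cup(\bar{\psi}_j^T)^{a_j})}{e^T(N^\vir)}$, and since $G_{g,\vi}(\vUp,\hbeta)$ is finite this integral is $\sum_{\vGa}I_{\vGa}$ with $I_{\vGa}$ as in \eqref{eqn:I}.

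Next I would evaluate $I_{\vGa}$ graph by graph, assembling ingredients already in place. Theorem \ref{thm:Fvir} identifies $[\cF_{\vGa}]^\vir$ with $c_{\vGa}\prod_{v\in V^S(\vGa)}[\Mbar_{g_v,\vc_v}(\cB G_v)]$, where the rational number $c_{\vGa}$ of \eqref{eqn:unified-c} collects $|\Aut(\vGa)|$, the edge-map automorphism orders $|\Aut(f_e)|$, and the gerbe factors $|C_{G_v}(k_{(e,v)})|/r_{(e,v)}$ attached to the nodes; equation \eqref{eqn:euler-formula} (the product of Propositions \ref{B1B4} and \ref{B5B2} via \eqref{eqn:euler}) gives $1/e^T(N^\vir_{\vGa})$; and \eqref{eqn:integrand} gives the restriction of the integrand to $\cF_{\vGa}$. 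Since $\cM_{\vGa}=\prod_v\Mbar_{g_v,\vc_v}(\cB G_v)$, these factor over the combinatorial structure so that $I_{\vGa}$ becomes the edge--flag prefactor $c_{\vGa}\prod_{e\in E(\Ga)}\bh(e)\prod_{(e,v)\in F(\Ga)}\bh(e,v)$ times a product over vertices of Hurwitz--Hodge integrals $\int_{[\Mbar_{g_v,\vc_v}(\cB G_v)]}\bh(v)\prod_{j\in S_v}\bigl(i_{\si_v}^*\hga_j^T\,\bar{\psi}_j^{a_j}\bigr)\big/\prod_{e\in E_v}\bigl(w_{(e,v)}-\bar{\psi}_{(e,v)}/r_{(e,v)}\bigr)$. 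The unstable vertices in $V^{0,1}(\vGa)$, $V^{1,1}(\vGa)$, $V^{0,2}(\vGa)$ are absorbed into this product via the conventions \eqref{eqn:one-orb}--\eqref{eqn:one-one-a-orb}, whose consistency with the stable case $n\geq 3$ is exactly the content of the two compatibility lemmas proved above. Summing over $\vGa\in G_{g,\vi}(\vUp,\hbeta)$ then yields \eqref{eqn:sum-orb}.

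I expect the combinatorial bookkeeping to be routine, since Sections \ref{sec:graph-notation-orb}--\ref{sec:each-graph} have already isolated every term; the real obstacle is justifying that localization is legitimate here. Concretely, one must check that the obstruction theory on $\MghXi$ restricts $T$-equivariantly to the fixed/moving splittings \eqref{eqn:Bf}--\eqref{eqn:Bm}, that $e^T(N^\vir_{\vGa})$ is invertible over $\bQ(u_1,\ldots,u_m)$ (which follows from the GKM hypothesis, since the weights entering $\bh(e)$, $\bh(e,v)$ and the vertex denominators never vanish), and hence that the residue integral over the proper locus $\MghXi^T$ is well defined independently of any compactness of the ambient moduli stack. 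The companion point, that $T^{2,f}=0$ and $T^{1,f}=T\cF_{\vGa}$ --- established at the start of Section \ref{sec:each-graph} --- is what makes Theorem \ref{thm:Fvir}, and therefore the entire factorization, available.
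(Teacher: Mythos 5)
Your proposal is correct and follows essentially the same route as the paper: the left-hand side is (by the definition \eqref{eqn:formal-GW}, i.e.\ the residue expression over the proper fixed locus) a sum of the graph contributions $I_{\vGa}$ over the finite set of decorated graphs, and each $I_{\vGa}$ is assembled exactly as in Sections \ref{sec:graph-notation-orb}--\ref{sec:each-graph} from Theorem \ref{thm:Fvir}, Equation \eqref{eqn:euler-formula}, Equation \eqref{eqn:integrand}, and the unstable-vertex conventions. The only cosmetic difference is that you phrase the first step as an invocation of virtual localization on the non-proper ambient stack, whereas in the formal setting the fixed-locus integral is the definition of the invariant, so nothing beyond properness of $\MghXi^T$ and invertibility of $e^T(N^\vir_{\vGa})$ (both of which you note) needs to be justified.
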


\end{document}